\numberwithin{equation}{section}
\def\fa{{\mathcal{F}}}
\def\O{{\mathcal{O}}}
\def\M{{\mathcal{M}}}
\def\D{{\mathcal{D}}}
\def\U{{\mathcal{U}}}
\def\B{{\mathcal{B}}}
\def\la{{\lambda}}
\def\bn{{\mathbb{N}}}
\def\bz{{\mathbb{Z}}}
\def\ve{{\varepsilon}}
\def\bq{{\mathbb{Q}}}
\def\rg{{\rangle}}
\title[On first integrals admitting asymptotic expansion]{On first integrals admitting asymptotic expansion for holomorphic foliations in dimension two}
\author{F. Reis}
\address{F. Reis: Instituto de Matemática - Universidade Federal do Rio de Janeiro, CP. 68530-Rio
de Janeiro-RJ, 21945-970 - Universidade Federal do Espírito Santo - CEP 29932-540 - Brazil
}
\email{fernando.reis@ufes.br}
\date{}
\begin{document}

\begin{abstract}
In this paper we study the dynamics of a holomorphic vector field near a singular point in dimension two using asymptotic expansion techniques. We consider a holomorphic vector field admitting first integrals in small sectors with nonzero asymptotic expansion. Under some natural hypotheses we prove the existence of a holomorphic first integral.
\end{abstract}

\maketitle

\newtheorem{Theorem}{Theorem}[section]
\newtheorem{Corollary}{Corollary}[section]
\newtheorem{Proposition}{Proposition}[section]
\newtheorem{Lemma}{Lemma}[section]
\newtheorem{Claim}{Claim}[section]
\newtheorem{Definition}{Definition}[section]
\newtheorem{Example}{Example}[section]
\newtheorem{Remark}{Remark}[section]

\newcommand\virt{\rm{virt}}
\newcommand\SO{\rm{SO}}
\newcommand\G{\varGamma}
\newcommand\Om{\Omega}
\newcommand\Kbar{{K\kern-1.7ex\raise1.15ex\hbox to 1.4ex{\hrulefill}}}
\newcommand\codim{\rm{codim}}
\renewcommand\:{\colon}
\newcommand\s{\sigma}
\def\vol#1{{|{\bfS}^{#1}|}}

\def\fa{{\mathcal F}}
\def\H{{\mathcal H}}
\def\O{{\mathcal O}}
\def\P{{\mathcal P}}
\def\L{{\mathcal L}}
\def\C{{\mathcal C}}
\def\Z{{\mathcal Z}}

\def\M{{\mathcal M}}
\def\N{{\mathcal N}}
\def\R{{\mathcal R}}
\def\ea{{\mathcal e}}
\def\Oa{{\mathcal O}}
\def\ee{{\bfE}}

\def\A{{\mathcal A}}
\def\B{{\mathcal B}}
\def\H{{\mathcal H}}
\def\V{{\mathcal V}}
\def\U{{\mathcal U}}
\def\al{{\alpha}}
\def\be{{\beta}}
\def\ga{{\gamma}}
\def\Ga{{\Gamma}}
\def\om{{\omega}}
\def\Om{{\Omega}}
\def\La{{\Lambda}}
\def\ov{\overline}
\def\dd{{\bfD}}
\def\pp{{\bfP}}

\def\nn{{\mathbb N}}
\def\zz{{\mathbb Z}}
\def\bq{{\mathbb Q}}
\def\bp{{\mathbb P}}
\def\bd{{\mathbb D}}
\def\bh{{\mathbb H}}
\def\te{{\theta}}
\def\rr{{\mathbb R}}
\def\bb{{\mathbb B}}

\def\pp{{\mathbb P}}

\def\dd{{\mathbb D}}
\def\zz{{\mathbb Z}}
\def\qq{{\mathbb Q}}

\def\hh{{\mathbb H}}
\def\nn{{\mathbb N}}

\def\LL{{\mathbb L}}

\def\co{{\mathbb C}}
\def\qq{{\mathbb Q}}
\def\na{{\mathbb N}}
\def\esima{${}^{\text{\b a}}$}
\def\esimo{${}^{\text{\b o}}$}
\def\rg{\rangle}
\def\ro{{\rho}}
\def\lV{\left\Vert}
\def\rV{\right\Vert }
\def\lv{\left|}
\def\rv{\right| }
\def\Sa{{\mathcal S}}
\def\D{{\mathcal D  }}

\def\si{{\bf S}}
\def\ve{\varepsilon}
\def\vr{\varphi}
\def\lV{\left\Vert }
\def\rV{\right\Vert}
\def\lv{\left| }
\def\rv{\right|}
\def\Range{\rm{{R}}}
\def\vol{\rm{{Vol}}}
\def\ind{\rm{{i}}}

\def\Int{\rm{{Int}}}
\def\Dom{\rm{{Dom}}}
\def\supp{\rm{{supp}}}
\def\Aff{\rm{{Aff}}}
\def\Exp{\rm{{Exp}}}
\def\Hom{\rm{{Hom}}}
\def\codim{\rm{{codim}}}
\def\cotg{\rm{{cotg}}}
\def\dom{\rm{{dom}}}
\def\Sa{\mathcal{{S}}}

\def\VIP{\rm{{VIP}}}
\def\argmin{\rm{{argmin}}}
\def\Sol{\rm{{Sol}}}
\def\Ker{\rm{{Ker}}}
\def\Sat{\rm{{Sat}}}
\def\diag{\rm{{diag}}}
\def\rank{\rm{{rank}}}
\def\Sing{\rm{{Sing}}}
\def\sing{\rm{{sing}}}
\def\hot{\rm{{h.o.t.}}}

\def\Fol{\rm{{Fol}}}
\def\grad{\rm{{grad}}}
\def\id{\rm{{id}}}
\def\Id{\rm{{Id}}}
\def\sep{\rm{{Sep}}}
\def\Aut{\rm{{Aut}}}
\def\Sep{\rm{{Sep}}}
\def\Res{\rm{{Res}}}
\def\ord{\rm{{ord}}}
\def\h.o.t.{\rm{{h.o.t.}}}
\def\Hol{\rm{{Hol}}}
\def\Diff{\rm{{Diff}}}
\def\SL{\rm{{SL}}}

\tableofcontents

\section{Introduction and main results}
A holomorphic 1-form $\omega(x,y) = A(x,y)dx + B(x,y)dy$ defined in a neighborhood $U$ of $0 \in \mathbb{C}^2$ defines a holomorphic foliation $\fa_{\omega}$ in $U$ with singular set $sing(\fa_{\omega}) = sing(\omega)$. For our purposes we may assume that $sing(\omega) = \{0\}$. Indeed we are concerned with the corresponding \textit{germ} of foliation at the origin. Given such a germ $\fa$ a \textit{separatrix} of $\fa$ is the germ of an irreducible analytic curve $\Gamma$ with $0 \in \Gamma$ and which is invariant by $\fa$. In this case $\Gamma \setminus \{0\}$ is a leaf of $\fa$. The existence of separatrix is proved in \cite{Ca-Sad-LN generalizedCurve}. Foliations with a finite number of separatrices are called \textit{non-dicritical}.

Motivated by the case of real planar vector fields we want to investigate the connections between the dynamics of $\fa$ near its separatrices and the global behavior of $\fa$. Our approach is based on the notion of asymptotic expansion of one-variable holomorphic functions. More precisely we investigate minimal conditions under which the existence of suitable local first integrals in suitable open sets near the separatrices assure the existence of a holomorphic first integral in a neighborhood of the origin. For this sake we shall work with the notion asymptotic expansion of a holomorphic function. 

The classical theory of asymptotic expansion is a well developed subject. There are many classic books on the subject (for instance \cite{Balser1, Balser2, Costin, Ramis3, Wasow}). The aim of this paper is to study asymptotic expansion associated to holomorphic dynamical systems. 

Given a germ $\mathcal{F}$ of foliation at $0 \in \mathbb{C}^2$ and a separatrix $\Gamma$ of $\mathcal{F}$ we shall say that a pair $(\mathcal{F}, \Gamma)$ admits a \textit{sectorial first integral} if given any representative $\mathcal{F}_{U}$ of $\mathcal{F}$  in a neighborhood $U$ of $0 \in \mathbb{C}^2$, for any transverse section $\Sigma$ with $p = \Sigma \cap \Gamma \neq \{0\}$, there exist a sector $S \subset \Sigma$ with vertex at $p$ which is invariant by $\mathcal{F}_{U}$ and a holomorphic function $\varphi: S \rightarrow \mathbb{C}$ such that $\varphi $ is constant in the trace of each leaf of $\mathcal{F}$ in $U$. The sectorial first integral is \textit{moderate} if $\varphi $ admits a nonzero asymptotic expansion in $S$. 
If  $\mathcal{F}$ is a germ of foliation, admitting a holomorphic first integral $f:U \rightarrow \mathbb{C}$ given by the restriction $\varphi = f|_{\Sigma}$, then $\fa$ is non-dicritical and for each separatrix $\Gamma$ of $\mathcal{F}$ the pair $(\mathcal{F},\Gamma)$ admits a moderate sectorial first integral. The converse of this fact is stated below:

\begin{Theorem} \label{Main}
Let $\mathcal{F}$ be a non-dicritical germ of holomophic foliation at $0 \in \mathbb{C}^2$. Assume that $\mathcal{F}$ is a generalized curve and some pair $(\mathcal{F}, \Gamma)$ admits a moderate sectorial first integral. Then $\mathcal{F}$ admits a holomorphic first integral in a neighborhood of $0 \in \mathbb{C}^2$.
\end{Theorem}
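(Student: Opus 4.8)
The plan is to pass to the reduction of singularities, use the asymptotic-expansion hypothesis to force the holonomy along $\Gamma$ to be finite, and then propagate finiteness of holonomy over the whole exceptional divisor. Since $\mathcal{F}$ is a generalized curve, its reduction of singularities $\pi\colon(\widetilde M,D)\to(\mathbb{C}^2,0)$ coincides with that of the curve $\mathrm{Sep}(\mathcal{F})$, and the reduced foliation $\widetilde{\mathcal{F}}=\pi^{*}\mathcal{F}$ has only non-degenerate reduced singularities --- in particular no saddle-nodes. The strict transform $\widetilde\Gamma$ of $\Gamma$ is smooth and meets $D$ transversally at a single point $q$ of one component $D_0$; as $D_0$ and $\widetilde\Gamma$ are both invariant, $q$ is a reduced, non-degenerate singularity of $\widetilde{\mathcal{F}}$ with separatrices $D_0$ and $\widetilde\Gamma$. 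Since $\pi$ is biholomorphic off $D$, a loop generating $\pi_1(\Gamma\setminus\{0\})$ lifts to a small loop around $q$ in $\widetilde\Gamma$, so the holonomy of $\Gamma$ at $0$ is conjugate to the holonomy $h$ of $\widetilde\Gamma$ at $q$; and the moderate sectorial first integral $\varphi$ pulls back to a holomorphic first integral, still with nonzero asymptotic expansion, on a sector with vertex $q$ inside a transversal to $\widetilde\Gamma$.

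The central step is to show that $h$ is periodic. In a coordinate $z$ on the transversal centred at $q$ write $h(z)=\mu z+\cdots$; then $\mu=e^{2\pi i\lambda}$, where $\lambda$ is the corresponding eigenvalue ratio of $\widetilde{\mathcal{F}}$ at $q$, and since $q$ is reduced and not a saddle-node one has $\lambda\notin\mathbb{Q}_{\geq 0}$. As $\varphi$ is constant on the traces of leaves it satisfies $\varphi\circ h=\varphi$ wherever both sides are defined, so its asymptotic expansion $\widehat\varphi\neq 0$ is a formal first integral of the formal germ $\widehat h$. Here the asymptotic-expansion lemmas of the previous sections, together with the formal classification of holonomies of reduced non-saddle-node singularities, are used: such a germ is formally conjugate either to $z\mapsto\mu z$ when $\mu$ is not a root of unity, or, when $\mu$ is a primitive $N$-th root of unity and $h$ is not periodic, to a germ whose $N$-th iterate is tangent to the identity but not the identity; in both cases substituting into $\widehat\varphi\circ\widehat h=\widehat\varphi$ forces $\widehat\varphi$ to have no non-constant part, hence to be a nonzero constant, and then the flat remainder $\varphi-\widehat\varphi(q)$ is eliminated by iterating the invariance relation (or by extending $\varphi$ across $q$), contradicting that $\varphi$ is a genuine non-constant first integral. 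Therefore $h$ is periodic, $\lambda\in\mathbb{Q}_{<0}$, and by the local theory of reduced singularities $\widetilde{\mathcal{F}}$ is holomorphically linearizable at $q$ and carries there a monomial holomorphic first integral adapted to the separatrices $D_0$ and $\widetilde\Gamma$.

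It remains to propagate this along $D$ and to conclude; this is the main obstacle. Because $\widetilde{\mathcal{F}}$ has no saddle-nodes, a reduced singularity either admits a local holomorphic first integral --- equivalently, has finite, linearizable holonomy along each of its separatrices --- or admits no non-constant formal first integral along one of them. Starting from $D_0$ one runs an induction along the dual tree of $D$: finiteness of the holonomy group of a component $D_i$ makes the local holonomy at each corner $D_i\cap D_j$ periodic, hence (no saddle-node) that corner singularity has a local first integral, hence the local holonomy of $D_j$ at the corner is periodic too; putting this together over all singular points of $D_j$, via the relation in $\pi_1$ of the punctured line $D_j\setminus\mathrm{Sing}$ and a simultaneity argument for finitely generated subgroups of $\mathrm{Diff}(\mathbb{C},0)$ all of whose generators are periodic, one obtains that $D_j$ again has finite holonomy. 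The delicate point --- where the generalized-curve hypothesis and the absence of cycles in the resolution tree are used essentially --- is exactly this last upgrade from ``holonomy group generated by periodic elements'' to ``finite holonomy group'' on a component carrying several singular points. Once every component of $D$ has finite holonomy, all leaves of $\widetilde{\mathcal{F}}$ near $D$ are closed, so by the Mattei--Moussu theorem $\mathcal{F}=\pi_{*}\widetilde{\mathcal{F}}$ has a holomorphic first integral near $0$; concretely one patches the local monomial first integrals along the tree $D$, extends across $D$ by Riemann's theorem, and pushes forward by $\pi$, the outcome being non-constant as it is locally a monomial.
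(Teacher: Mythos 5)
Your opening step is sound and coincides with the paper's base case: the invariance $\varphi\circ h=\varphi$ passes to the asymptotic series, and a non-periodic reduced (non-saddle-node) holonomy would kill the non-constant part of $\widehat\varphi$, so the holonomy of $\widetilde\Gamma$ at its attachment point is finite and the singularity there is linearizable with a monomial first integral (this is exactly Proposition~\ref{INVfinite} plus Theorem~\ref{Theorem:analyticconjugation}). The genuine gap is in your propagation step along the divisor. You reduce it to the assertion that a finitely generated subgroup of $\mathrm{Diff}(\mathbb{C},0)$ whose generators (the corner holonomies of a component $D_j$) are periodic must be finite, and you call this a ``simultaneity argument.'' No such argument exists: a group generated by periodic germs need not be finite. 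For instance $f(z)=-z$ and $g(z)=-\frac{z}{1-z}$ are both involutions, yet $f\circ g(z)=\frac{z}{1-z}$ is parabolic and of infinite order, so $\langle f,g\rangle$ is infinite. Thus knowing that each local holonomy of $D_j$ around a singular point is periodic (even that each corner carries a monomial first integral) says nothing about finiteness of $\mathrm{Hol}(D_j)$, and neither the generalized-curve hypothesis nor the absence of cycles in the resolution tree repairs this on its own. Since the whole conclusion rests on finite holonomy of every component, this is a missing idea, not a detail.

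What the paper does at precisely this point is different in kind: it transports the moderate sectorial first integral itself, across corners by the Dulac correspondence (Lemma~\ref{LemmaDulac}) and along a component by holonomy transport (Lemma~\ref{LemmaTransHol}, Proposition~\ref{NextEnough}), so that on a transversal to each component one has a single sectorial function with nonzero asymptotic expansion which is invariant under the entire relevant group -- not merely under the corner generators. Finiteness of that group then follows from Proposition~\ref{INVfinite}. This is embedded in an induction on the number of blow-ups: after one blow-up, Claim~\ref{ClaimA} equips every singularity on $\mathbb{P}_1$ with a separatrix admitting a moderate sectorial first integral, the induction hypothesis yields local holomorphic first integrals there, and Claim~\ref{ClaimMainGroup} shows the group generated by the invariance groups of their holonomy extensions sits inside the invariance group of the transported sectorial integral, hence is finite; only then is the Mattei--Moussu gluing machinery invoked. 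Your final patching paragraph is at the same level of detail as the paper's appeal to Mattei--Moussu, so the essential defect of your proposal is the absence of any mechanism (transport of an invariant function with nonzero asymptotic expansion, or of a formal first integral as in Mattei--Moussu) forcing finiteness of the full holonomy group of each component; without it the induction along the dual tree does not close.
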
 

We refer to \cite{Ca-Sad-LN generalizedCurve} for the notion of generalized curve. As for the moment we say that the desingularization of $\fa$ by quadratic blow-ups produces no saddle-node singularities, i.e, only singularities with non-singular linear part. Let us introduce another important notion we use. A separatrix $\Gamma$ of $\mathcal{F}$ is called \textit{non-weak} if after the reduction of singularities of $\mathcal{F}$, every point of the exceptional divisor can be connected to $\Gamma$ by a sequence of projective lines starting at $\Gamma$ and such that every time we reach a corner, we arrive either on a separatrix of a non-degenerated singularity or on a strong (non-weak) manifold of a saddle-node. We refer to section §6 for further details on the resolution of singularities process.

With this notion we can extend Theorem \ref{Main} as follow:

\begin{Theorem} \label{Main2}
Let $\mathcal{F}$ be a non-dicritical germ of holomophic foliation at $0 \in \mathbb{C}^2$. Assume that for some non-weak separatrix $\Gamma$ of $\mathcal{F}$ the pair $(\mathcal{F}, \Gamma)$ admits a moderate sectorial first integral. Then $\mathcal{F}$ admits a holomorphic first integral in a neighborhood of $0 \in \mathbb{C}^2$.
\end{Theorem}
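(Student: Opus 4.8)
The plan is to resolve $\mathcal F$, isolate a local mechanism at the reduced singularities produced by the resolution, and then propagate the resulting local first integrals all along the exceptional divisor, the propagation being feasible precisely because of the non-weak hypothesis. First I would take the reduction of singularities $\pi\colon(\tilde M,D)\to(\mathbb C^2,0)$, write $\tilde{\mathcal F}=\pi^{*}\mathcal F$ and $D=\pi^{-1}(0)=\bigcup_{j}D_{j}$; since $\mathcal F$ is non-dicritical every $D_{j}$ is invariant by $\tilde{\mathcal F}$, the dual graph of $D$ is a tree, and $\tilde{\mathcal F}$ has only reduced singularities (non-degenerate ones or saddle-nodes). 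The strict transform $\tilde\Gamma$ meets $D$ at a single point $p_{0}$, on a component $D_{0}$, and $p_{0}\in\mathrm{Sing}(\tilde{\mathcal F})$ has $\tilde\Gamma$ and a branch of $D_{0}$ as its two separatrices. Pulling back by $\pi$ and following the leaves of $\tilde{\mathcal F}$ (a biholomorphism near $p_{0}$, intertwined with the corner transition map at $p_{0}$), the moderate sectorial first integral of $(\mathcal F,\Gamma)$ produces a moderate sectorial first integral of $(\tilde{\mathcal F},\tilde\Gamma)$ on a sector of a section $\Sigma_{0}$ transverse to $D_{0}$ at $p_{0}$; one has to check that this transport keeps the object holomorphic on a genuine subsector with vertex on $D_{0}$ and keeps its asymptotic expansion nontrivial.

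The key local input, which I would establish (or have established) beforehand for reduced singularities, is: a reduced singularity $q$ admitting a moderate sectorial first integral along a separatrix that is not the weak (central) manifold of a saddle-node must be non-degenerate and analytically linearizable, with resonant eigenvalue ratio $\lambda=-a/b\in\mathbb Q_{<0}$; hence $q$ carries the holomorphic first integral $x^{b}y^{a}$ in coordinates where the separatrices are the axes, all its leaves are closed, and the holonomy of each separatrix at $q$ is finite cyclic. In particular the sectorial first integrals of a saddle-node along its strong separatrix are of $e^{c/x^{k}}$-type, hence flat or without any asymptotic expansion, so a saddle-node supports no moderate sectorial first integral there. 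Coupled with the classical Mattei--Moussu machinery this gives a spreading statement: if near some singular point of a component $D_{j}$ the leaves of $\tilde{\mathcal F}$ are closed and there is a nonconstant holomorphic first integral on a transverse section, then the holonomy group of $D_{j}$ has finite orbits, hence is finite, hence $\tilde{\mathcal F}$ has a holomorphic first integral on a full neighborhood of $D_{j}$, yielding closed leaves and a nonconstant transverse first integral at every singular point of $D_{j}$.

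I would then run an induction along the chains of projective lines furnished by the non-weak hypothesis, starting from $D_{0}$. At $p_{0}$ the local mechanism applies to the separatrix $\tilde\Gamma$ (the degenerate case $p_{0}$ a saddle-node with $\tilde\Gamma$ its central manifold being disposed of separately), giving a holomorphic first integral near $p_{0}$ and hence, by the spreading statement, one on a neighborhood of all of $D_{0}$. At a corner $q=D_{j}\cap D_{j+1}$ reached by a chain, the first integral already built near $D_{j}$ restricts on a transverse section to a nonconstant holomorphic first integral, and the separatrix $D_{j}$ is --- by the non-weak hypothesis --- either a separatrix of a non-degenerate singularity or the strong manifold of a saddle-node, i.e.\ never the forbidden central-manifold case; so the local mechanism applies at $q$ and forces $q$ non-degenerate and linearizable, in particular ruling out the saddle-node alternative, which produces closed leaves near $q$ and a nonconstant transverse first integral along $D_{j+1}$, after which the spreading statement covers $D_{j+1}$. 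Since every component of $D$ lies on such a chain, the leaves of $\tilde{\mathcal F}$ are closed on a neighborhood of all of $D$, $\tilde{\mathcal F}$ has no saddle-nodes, and $\mathcal F$ has only finitely many separatrices; by Mattei--Moussu, $\tilde{\mathcal F}$ has a holomorphic first integral near $D$, which descends through $\pi$ to one for $\mathcal F$ near $0$. This recovers Theorem~\ref{Main} as the special case where the non-weak condition is automatic.

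The hard part will be the local mechanism of the second paragraph --- the classification of reduced singularities carrying a moderate sectorial first integral, and above all the exclusion of saddle-nodes --- together with the verification, at each corner, that the transported object is genuinely a moderate sectorial first integral along the \emph{correct} separatrix: this is exactly the point at which the non-weak/strong-manifold hypothesis is indispensable, since through a saddle-node corner entered along its central manifold, or through a poorly placed corner, the transition (Dulac) map would ruin the asymptotic expansion, through flatness or through uncontrolled ramification. One must also carry the combinatorial tree data in step with the analytic data throughout the induction, and justify the finite-holonomy globalization over each projective line.
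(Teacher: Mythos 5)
Your strategy is sound and rests on the same pillars as the paper's argument, but you package the induction differently. The paper proves Theorem \ref{Main2} by first establishing Proposition \ref{PropTheoMain2} (no saddle-nodes occur in the reduction): it propagates the moderate sectorial first integral along the divisor via the Dulac correspondence at corners (Lemma \ref{LemmaDulac}) and holonomy transport along components (Lemma \ref{LemmaTransHol}, Proposition \ref{NextEnough}), reaching a point on the strong separatrix of any putative saddle-node and contradicting Lemma \ref{null asymptotic saddle-node} (flat asymptotics along strong separatrices, obtained from the Hukuhara--Kimura--Matuda sectorial normalization); once saddle-nodes are excluded it quotes Theorem \ref{Main} verbatim, whose proof is an induction on the number of blow-ups with the Mattei--Moussu globalization at each stage. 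You instead run a single sweep over the components of the final exceptional divisor along the chains furnished by the non-weak hypothesis, building genuine holomorphic first integrals component by component and crossing corners by restricting the already-built integral rather than by transporting sectorial data through a Dulac map; this avoids the paper's double induction and is a legitimate, arguably more direct route, at the price of leaning harder on the spreading step over each component: ``finite holonomy group of $D_j$ implies a first integral on a full neighborhood of $D_j$'' requires local first integrals at \emph{all} singularities of $D_j$, and finiteness of the holonomy group rules out saddle-nodes attached by their strong manifold (parabolic, infinite-order holonomy) but says nothing by itself about saddle-nodes attached by their weak manifold at non-corner points, whose weak holonomy may be periodic --- a configuration the paper's Proposition \ref{PropTheoMain2} also treats only implicitly. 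Finally, note that your ``key local input'' is exactly where the paper's technical content lies: the non-degenerate classification comes from the finiteness of the invariance group of a holomorphic function with nonzero asymptotic expansion (Lemma \ref{FormalInvar}, Lemma \ref{FormalChange}, Proposition \ref{INVfinite}) combined with Martinet--Ramis (Theorem \ref{Theorem:analyticconjugation}), and the saddle-node exclusion is Lemma \ref{null asymptotic saddle-node}; you state these correctly and flag them as the hard part, but your proposal would need those proofs supplied to be complete.
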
 
The hypothesis of generalized curve can be removed if we consider all separatrices of $\mathcal{F}$. Indeed,
\begin{Theorem} \label{Main3}
Let $\mathcal{F}$ be a non-dicritical germ of holomophic foliation at $0 \in \mathbb{C}^2$. Assume that for every separatrix $\Gamma$ of $\fa$ the pair $(\fa,\Gamma)$ admits a moderate sectorial first integral then $\fa$ admits a holomorphic first integral.
\end{Theorem}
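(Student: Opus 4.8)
The plan is to reduce Theorem~\ref{Main3} to Theorem~\ref{Main2} by showing that, when \emph{every} separatrix of $\fa$ admits a moderate sectorial first integral, one of them must be non-weak and, moreover, the generalized-curve hypothesis becomes unnecessary because the only obstruction to it --- saddle-nodes in the resolution --- would force a separatrix whose sectorial first integral cannot be moderate. First I would recall the structure of the reduction of singularities $\pi\colon \widetilde M \to (\co^2,0)$ of the non-dicritical germ $\fa$: the exceptional divisor $D=\pi^{-1}(0)$ is a finite union of projective lines, the strict transforms of the separatrices meet $D$ at regular points of $D$, and every singularity of $\widetilde\fa = \pi^*\fa$ is either non-degenerate (linearizable or resonant, with two transverse separatrices) or a saddle-node (with a well-defined strong separatrix and a possibly divergent weak/central one). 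The key dichotomy to exploit is that a saddle-node along a component of $D$ produces, on a transverse section through its weak manifold, a holonomy-type obstruction (Gevrey-$1$, genuinely divergent normal form) that is incompatible with the existence of a sectorial first integral with \emph{nonzero} asymptotic expansion propagated from an adjacent separatrix.

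The key steps, in order, would be: (1) Fix an arbitrary separatrix $\Gamma$ and transport its moderate sectorial first integral $\varphi$ up the resolution tree by holonomy, exactly as in the proof of Theorem~\ref{Main2}; at each component $D_j$ of $D$ crossed, the induced sectorial first integral on a transverse section has a nonzero asymptotic expansion because composition with holonomy maps (which are holomorphic germs tangent to suitable linear parts) preserves the class of functions admitting nonzero asymptotic expansion. (2) Show that this transport \emph{cannot} stop at a corner sitting on a saddle-node unless we are arriving along its strong manifold: if we arrived along the weak side, the holonomy of the weak separatrix of a saddle-node is a parabolic (or formally non-linearizable) germ whose only formal first integrals are divergent, contradicting the moderateness after one more composition. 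Hence the sectorial first integral extends to \emph{every} component of $D$, which is exactly the combinatorial content of $\Gamma$ being non-weak \emph{and} forces every saddle-node encountered to be met along its strong manifold. (3) Finally, argue that a generalized curve is not actually needed: if $\widetilde\fa$ had a saddle-node at some corner $q\in D_i\cap D_j$, then one of $D_i,D_j$ is (locally) the weak manifold, and since the sectorial first integral has been propagated onto \emph{both} $D_i$ and $D_j$ (using the hypothesis for all separatrices), we again meet the weak side and reach the same contradiction; therefore $\widetilde\fa$ has no saddle-nodes at all, i.e.\ $\fa$ is automatically a generalized curve. Then Theorem~\ref{Main2} applies and yields the holomorphic first integral.

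The main obstacle I expect is step~(2)–(3): making precise the claim that a saddle-node singularity is incompatible with a \emph{moderate} (nonzero asymptotic expansion) sectorial first integral coming in along its weak manifold. This requires the Hukuhara--Kimura--Matuda / Martinet--Ramis normal form for saddle-nodes, the description of sectorial first integrals in the two sectors covering a punctured neighborhood of the weak axis, and a Gevrey/summability argument showing that a nonzero asymptotic series that is a first integral in such a sector would have to be summable and hence give a true holomorphic first integral of the saddle-node --- which is impossible since the saddle-node's weak separatrix is generically non-convergent. Once this incompatibility lemma is in place, the rest is the same holonomy-propagation bookkeeping used for Theorem~\ref{Main2}, now run with \emph{all} separatrices as available sources, which trivially makes the chosen separatrix non-weak and removes the generalized-curve hypothesis.
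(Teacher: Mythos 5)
Your overall strategy --- use the hypothesis on \emph{all} separatrices to rule out saddle-nodes in the reduction of singularities and then invoke the earlier theorem --- is the same as the paper's, but your key incompatibility lemma is stated with the roles of the strong and weak manifolds reversed, and this is not a cosmetic slip. Lemma \ref{null asymptotic saddle-node} (proved via the Hukuhara--Kimura--Matuda sectorial normalization) says that it is the \emph{strong} separatrix of a saddle-node that admits a sectorial but never a moderate sectorial first integral, while Example \ref{ModerateNecessity} shows the weak side is perfectly compatible with moderateness: for $x\,dy-y^2dx=0$ the restriction of $xe^{1/y}$ to a section transverse to the weak separatrix is $xe$, which has nonzero asymptotic expansion (this is precisely why the ``non-weak'' hypothesis is needed in Theorem \ref{Main2}). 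Moreover, the holonomy of the weak separatrix of the model is linear, $x\mapsto e^{2\pi i\lambda}x$; it is the \emph{strong} holonomy that is a nontrivial germ tangent to the identity. So your step (2) (``arriving along the weak side contradicts moderateness, hence every saddle-node is met along its strong manifold'') asserts the opposite of what is true; the contradiction must be produced by transporting a moderate sectorial first integral to a sector transverse to the strong manifold.

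There is a second gap in step (3): you assert that the sectorial first integral ``has been propagated onto both $D_i$ and $D_j$'', but the Dulac correspondence (Lemma \ref{LemmaDulac}) only crosses corners in the resonant normal form $nx\,dy+my\,dx=0$, i.e.\ corners with a holomorphic first integral, so a saddle-node corner blocks the transport and the propagation to the component carrying the strong manifold must be justified by other means. This is exactly what the paper's proof supplies: first Claim \ref{ClaimT31} (a strong separatrix transverse to the divisor would project to a separatrix of $\fa$ contradicting the hypothesis via Lemma \ref{null asymptotic saddle-node}, so every strong separatrix lies inside the divisor), then Camacho--Sad index computations (the strong separatrix contributes index $0$ while $\sum \mathrm{Ind}=\mathbb{P}_j\cdot\mathbb{P}_j<0$), plus in one configuration a holonomy-product identity around a component, to exhibit on the relevant component a non-saddle-node singularity with a transverse separatrix; from that separatrix Proposition \ref{NextEnough} transports a moderate first integral to a point of the strong manifold, giving the contradiction. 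Without this index-theoretic bookkeeping (and with the weak/strong roles corrected) your steps (2)--(3) do not go through as written; also note that once saddle-nodes are excluded the paper concludes by Theorem \ref{Main} (the generalized-curve hypothesis now being automatic), rather than by Theorem \ref{Main2}.
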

As an application of our results we will give a proof of the following theorem of Mattei-Moussu:
\begin{Theorem} [Mattei-Moussu \cite{mattei-moussu}, pg. 472, Theorem A]\label{Cor3} 
Let $\mathcal{F}$ be a germ of holomophic foliation at $0 \in \mathbb{C}^2$. Suppose that $\fa$ admits a nonzero formal first integral. Then $\fa$ admits a holomorphic first integral.
\end{Theorem}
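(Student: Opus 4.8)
The plan is to deduce Theorem \ref{Cor3} from Theorem \ref{Main3}. The hypothesis is that $\fa$ admits a nonzero formal first integral $\hat f \in \co[[x,y]]$, meaning $\hat\omega(\hat f) = 0$ where $\omega$ is a $1$-form defining $\fa$. The strategy is: first extract classical consequences from the existence of a formal first integral; second, produce from $\hat f$ a moderate sectorial first integral along every separatrix; finally invoke Theorem \ref{Main3}.

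\medskip

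First I would recall the structural consequences of having a nonzero formal first integral. Writing $\hat f = \sum_{j \ge k} \hat f_j$ in homogeneous components with $\hat f_k \not\equiv 0$, the lowest-order term $\hat f_k$ (or rather its radical) governs the separatrices, and one shows $\fa$ is necessarily non-dicritical: if $\fa$ were dicritical, a generic line through the origin (or a generic leaf after a dicritical blow-up) would be transverse to the level structure and $\hat f$ would have to be constant along an infinite family of leaves, forcing $\hat f$ constant. More precisely, a formal first integral is preserved under blow-ups: if $\pi$ is a blow-up, $\hat f \circ \pi$ is a formal first integral of $\pi^*\fa$ near each point of the divisor, and along a dicritical component the divisor would be non-invariant, contradicting that $\hat f\circ\pi$ restricted to the divisor is a formal function constant on the (transverse) leaves. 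Hence $\fa$ is non-dicritical, which is the first hypothesis of Theorem \ref{Main3}.

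\medskip

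The heart of the argument is to manufacture, for each separatrix $\Gamma$, a \emph{moderate sectorial first integral}. Fix a transverse section $\Sigma$ with $p = \Sigma \cap \Gamma \ne 0$ and holonomy $h \in \Diff(\Sigma,p)$ of $\Gamma$. The restriction $\hat f|_\Sigma$ is a formal power series in the transverse coordinate $t$ which is formally invariant under $h$; since $\hat f$ is nonconstant and (after reduction of singularities, or directly via the lowest-order analysis) the multiplicity structure forces $\hat f|_\Sigma$ to be a genuine nonzero formal series $\hat\varphi(t) = \sum_{n\ge m} c_n t^n$ with $c_m \ne 0$. Now by the classical Borel--Ritt theorem on any sector $S \subset \Sigma$ of opening less than $2\pi$ with vertex at $p$ there is a holomorphic $\varphi \colon S \to \co$ with asymptotic expansion $\hat\varphi$. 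The point is to choose $\varphi$ to be an actual first integral, i.e. constant on leaf-traces, equivalently $h$-invariant on $S$. For this one either (a) uses that $\fa$ being a foliation with formal first integral has, after reduction, only singularities in the Poincaré domain or resonant nodes with finite holonomy — in the generalized-curve situation the holonomy groups are finite or linearizable, so an honest analytic sectorial first integral is built leaf by leaf by transporting the value of $\hat\varphi$ at one point along the (univalent, in a simply connected sector) leaves; or (b) invokes the general sectorial normalization / Borel--Laplace summation machinery (as in the references \cite{Balser1,Balser2,Ramis3}) to sum $\hat f$ into sectorial first integrals directly from $\hat\omega(\hat f)=0$. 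Either way one obtains on a suitable invariant sector $S$ a holomorphic $\varphi$ with $\varphi$ constant on leaves and with nonzero asymptotic expansion $\hat\varphi$ — i.e. a moderate sectorial first integral for $(\fa,\Gamma)$. Since this construction works for \emph{any} separatrix $\Gamma$, the hypothesis of Theorem \ref{Main3} is met, and we conclude that $\fa$ admits a holomorphic first integral.

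\medskip

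The main obstacle is step three: upgrading the purely formal $h$-invariance of $\hat\varphi = \hat f|_\Sigma$ to an \emph{honest} holomorphic first integral on a sector, rather than merely a holomorphic function asymptotic to $\hat\varphi$ that is only approximately invariant. The Borel--Ritt realization is cheap but gives no invariance; one really needs to know that the obstruction to summing $\hat f$ along the foliation is resolved in sectors, which is where the non-dicriticity plus the formal-integrability force enough tameness on the holonomy (finite order or analytic linearizability on each component of the exceptional divisor) to carry the value of $\hat\varphi$ holomorphically along leaves. Once this local-in-sector summation is in hand, the global statement is a black-box application of Theorem \ref{Main3}; indeed the whole point of the preceding theorems is to absorb exactly this passage from sectorial to genuine first integrals.
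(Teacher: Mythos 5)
Your overall strategy (extract non-dicriticity, manufacture moderate sectorial first integrals out of $\hat{f}$, then black-box one of the main theorems) is the same in spirit as the paper's, but two of your steps have genuine gaps. First, the object $\hat{f}|_\Sigma$ does not exist: $\hat{f}$ is only a formal series at $0\in\mathbb{C}^2$, while $\Sigma$ is a transversal through a point $p\neq 0$ of the separatrix, so there is no way to restrict or evaluate $\hat{f}$ there (writing $\hat{f}=\sum_j f_j(x)y^j$ along $\Gamma=\{y=0\}$, the coefficients $f_j$ are themselves only formal at $x=0$ and cannot be evaluated at $x=x_0\neq 0$). The paper avoids exactly this by blowing up once: in the chart $E(x,t)=(x,xt)$ one has $\widehat{F}\circ E=\sum_k A_k(t)x^k$ with the $A_k$ polynomials, so the restriction to a transversal $\{t=t_0\}$ at a non-singular point $t_0$ of the exceptional divisor is a genuine element of $\mathbb{C}[[x]]$; Borel--Ritt is applied there, and the sectorial data are then carried over the divisor and to the separatrix by the holonomy/Dulac transport of Proposition \ref{NextEnough}.

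Second, you correctly identify that Borel--Ritt gives no leaf-invariance, but your proposed repairs do not close this hole: alternative (a) assumes the relevant holonomies are finite or analytically linearizable, which is essentially the conclusion that Proposition \ref{INVfinite} together with Martinet--Ramis is designed to extract \emph{from} a moderate sectorial first integral, so taking it as an input is circular (if the holonomies are already known to be finite, Mattei--Moussu follows by their classical argument and none of this machinery is needed); alternative (b), Borel--Laplace summability of an arbitrary divergent formal first integral, is not available a priori. There is also a structural difference from the paper: you route the argument through Theorem \ref{Main3}, which demands a moderate sectorial first integral along \emph{every} separatrix, whereas the paper proves that $\fa$ is non-dicritical (via the tangent-cone identity $R=(k+1)\widehat{F}_{k+1}\not\equiv 0$) and a generalized curve (the formal first integral persists under blow-up and saddle-nodes admit none), and then invokes Theorem \ref{Main}, which needs only one pair $(\fa,\Gamma)$. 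Your route spares you the generalized-curve step, but it multiplies the burden of precisely the construction that is incomplete, and the generalized-curve property is in any case what guarantees (through the absence of saddle-node corners) that the transport of Proposition \ref{NextEnough} can reach the separatrix at all.
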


In section §\ref{ExampleSection} we discuss the above hypotheses in some examples. The paper is organized as follows: In section §2 we give definitions and some basic results in asymptotic theory in one dimensions. In section §3 we study germs of diffeomorphisms admitting an invariant sector. In section §4 we prove the existence of a formal normal form for formal power series. Section §\ref{FinitSubgroup} is devoted to the study of the subgroups of diffeomorphisms that are invariant by functions with asymptotic expansion. Section §\ref{ReviewFoliations}, we give some basics notions of holomorphic foliations and resolution of singularities. In the next section, we turn our attention to some importants examples that justify the hypothesis of main results. In section §\ref{DulacSection} we explore an important tool called Dulac correspondence. As an aplication of this tool we can transpont the moderate sectorial first integral. This fact is fundamental to prove Theorem \ref{Main} and Theorem \ref{Main2}. In section §\ref{SectionMain} we present the proof Theorem \ref{Main}. In section §\ref{SaddlenodeSection} we study the saddle-node case. We give the notion of asymptotic expansion in dimension two. Furthermore, we use the Hukuara-Kimura-Matuda theorem \cite{hukuara-kimura-matuda} in order to study the strong separatrix and the presence of the moderate sectorial first integral. In section §\ref{Proof of theorem2} we prove a fundamental proposition to demonstrate Theorem \ref{Main2}. In section §\ref{Section:GenCurve} we prove Theorem \ref{Main3}. In the last section, we give an application of the techniques developed in the previous sections. We present a proof for Theorem \ref{Cor3}.

\section{A review of asymptotic expansion in dimension one} \label{ReviewSection}
Let us recall the definition of asymptotic expansion in dimension one.

\begin{Definition}{\rm 
Given $R \in \mathbb{R}_{+}$ and $\alpha_{1},\alpha_{2} \in \mathbb{R}$, a \textit{sector $S$ with vertex at the origin}  in $\mathbb{C}$ is a subset defined by
\[
S = \{z \in \mathbb{C} : 0 < \mid z \mid < R, \alpha_{1} < arg(z) < \alpha_{2}\}.
\]
}
\end{Definition}

\begin{Definition}[\cite{Wasow} pg. 32]{\rm 
A holomorphic function $\varphi $ is said to \textit{admits a formal power series $\hat{\varphi }(z) = \sum^{\infty}_{n=0}a_{j}z^{j}$ as asymptotic expansion} on a sector $S\subset \mathbb{C}$ if for all proper sub-sector $S'\subset S$ and all $k \in \mathbb{N}$, there exist a constant $A_{k} > 0$ such that
\[
\mid\mid \varphi(z) - \sum^{k-1}_{n=0}a_{j}z^{j}\mid\mid \leq A_{k} \mid \mid z \mid \mid^{k}
\]
for all $z \in S'$.
}
\end{Definition}
The existence of an asymptotic expansion is guaranteed by the following result:

\begin{Theorem} [cite{Wasow} pg. 40]\label{coefasymp2}
Let $\varphi $ be holomorphic in a sector $S$. If all the limits 
\[
\varphi _{r} = \lim_{z \rightarrow 0} \varphi^{(r)}(z), 
\]
exist for every $r = 0,1,... $ then $\varphi $ admits $\sum_{r=0}^{\infty}\frac{\varphi_{r}}{r!} z^{r}$ as asymptotic expansion in $S$.
\end{Theorem}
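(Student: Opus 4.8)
The plan is to reduce the statement to a one-real-variable Taylor expansion carried out along radial segments issuing from the vertex. Fix a proper subsector $S' \subset S$, so that $\overline{S'}\setminus\{0\}$ is a compact subset of $S$. Given $z \in S'$, the whole segment $\{tz : 0 < t \le 1\}$ lies in $S'$, because a sector as defined above is a union of such radial segments. Consider the function $\psi(t) := \varphi(tz)$ of the real variable $t \in (0,1]$. Since $\varphi$ is holomorphic on $S$, the chain rule gives $\psi^{(r)}(t) = z^r \varphi^{(r)}(tz)$ for every $r \ge 0$ and every $t \in (0,1]$. By hypothesis $\lim_{w \to 0} \varphi^{(r)}(w) = \varphi_r$ exists, so $\psi^{(r)}(t) \to z^r \varphi_r$ as $t \to 0^+$; hence $\psi$ extends to a function of class $C^{\infty}$ on the closed interval $[0,1]$ with $\psi^{(r)}(0) = z^r \varphi_r$.

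Next I would apply Taylor's formula with integral remainder to $\psi$ on $[0,1]$ and evaluate at $t = 1$ (since $\psi$ is complex-valued, one must use the integral form of the remainder rather than the Lagrange form):
\begin{equation*}
\varphi(z) = \psi(1) = \sum_{n=0}^{k-1} \frac{\psi^{(n)}(0)}{n!} + \frac{1}{(k-1)!} \int_0^1 (1-t)^{k-1} \psi^{(k)}(t)\, dt = \sum_{n=0}^{k-1} \frac{\varphi_n}{n!} z^n + \frac{z^k}{(k-1)!} \int_0^1 (1-t)^{k-1} \varphi^{(k)}(tz)\, dt .
\end{equation*}
Thus the $k$-th partial sum $\sum_{n=0}^{k-1} \frac{\varphi_n}{n!} z^n$ of the series $\sum_{r=0}^{\infty} \frac{\varphi_r}{r!} z^r$ differs from $\varphi(z)$ exactly by the displayed integral.

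It remains to estimate that integral on $S'$. Extending $\varphi^{(k)}$ to $\overline{S'}$ by the value $\varphi_k$ at the origin produces a continuous function on the compact set $\overline{S'}$, so there is a constant $M_k > 0$ with $|\varphi^{(k)}(w)| \le M_k$ for all $w \in \overline{S'}$; in particular $|\varphi^{(k)}(tz)| \le M_k$ for all $t \in [0,1]$ and all $z \in S'$. Consequently
\begin{equation*}
\left| \varphi(z) - \sum_{n=0}^{k-1} \frac{\varphi_n}{n!} z^n \right| \le \frac{M_k}{(k-1)!}\, |z|^k \int_0^1 (1-t)^{k-1}\, dt = \frac{M_k}{k!}\, |z|^k ,
\end{equation*}
so the constant $A_k := M_k/k!$ does the job. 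Since $k \in \mathbb{N}$ and the proper subsector $S' \subset S$ were arbitrary, this is precisely the statement that $\varphi$ admits $\sum_{r=0}^{\infty} \frac{\varphi_r}{r!} z^r$ as asymptotic expansion in $S$.

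I do not expect any serious obstacle: the argument is soft. The only points deserving a word of care are that sectors are radially star-shaped toward the vertex (so the segments $tz$ stay inside $S'$), that the identity $\psi^{(r)}(t) = z^r \varphi^{(r)}(tz)$ combined with the limit hypothesis genuinely upgrades $\psi$ to a $C^{\infty}$ function up to $t = 0$, and that one uses the integral rather than the Lagrange form of Taylor's remainder because $\psi$ is complex-valued. Alternatively, the same conclusion follows by an easy induction on $k$, integrating along radial segments the estimate already established for $\varphi'$.
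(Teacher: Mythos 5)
Your argument is correct and complete: the paper itself quotes this statement from Wasow (p.\ 40) without giving a proof, and your radial Taylor expansion with integral remainder is precisely the classical argument (equivalent to iterated integration of $\varphi^{(k)}$ along the segments $t \mapsto tz$), so there is nothing to compare against beyond the standard reference. One cosmetic slip: $\overline{S'}\setminus\{0\}$ is not compact (it is not closed); what you actually need --- and do state correctly later --- is that $\overline{S'}$ is compact and that extending $\varphi^{(k)}$ by the value $\varphi_k$ at the vertex gives a continuous function there, which yields the bound $M_k$ and hence $A_k = M_k/k!$.
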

We will summarize the main theorems on asymptotic expansion in a single result as follows. 
\begin{Theorem} [\cite{Wasow} pg. 33]\label{BusAsymp}
Let $\varphi , \psi$ be holomorphic functions that admit, respectively, $\sum_{r=0}^{\infty}a_{r}z^{r}$ in a sector $S$ and $\sum_{r=0}^{\infty}b_{r}z^{r}$ in a sector $T$, as asymptotic expansion. Then 
\\
\\
\emph{(I)} $a_{r} = \frac{1}{r!} \lim_{z \rightarrow 0} \varphi ^{(r)}(z)$, with $z$ belongs to proper subsectors.
\\
\\
\emph{(II)} If $S = D(0,r) - \{0\}$ with radius $r>0$ and vertex at $0 \in \mathbb{C}$ then $\varphi$ is holomorphic in $D(0,r)$ and $a_r = \frac{\varphi^{(r)}(0)}{r!}$.
\\
\\
\emph{(III)} $\varphi$ admits at most one asymptotic series representation in $S$.
\\
\\
\emph{(IV)} $\alpha \varphi + \beta \psi$ admits $\sum_{r=0}^{\infty}(\alpha a_{r} + \beta b_{r}) z^{r}$ as asymptotic expansion. \\
\\
\emph{(V)} The product $\varphi \psi$ admits $\left(\sum_{r=0}^{\infty}a_{r}z^{r}\right) \left(  \sum_{r=0}^{\infty}b_{r}z^{r}\right) = \sum_{r=0}^{\infty}c_{r}z^{r} $ as asymptotic expansion, with \[
c_{r} =\sum_{j=0}^{r} a_{j}b_{r-j}.
\]
\\
\\
\emph{(VI)} if $a_{0} \neq 0$, then 
$
\frac{1}{\varphi(z)}
$ admits $\sum_{0}^{\infty}c_{t}z^{t}$ where,
\[
\left(\sum_{r=0}^{\infty}a_{r}z^{r}\right)\left(\sum_{r=0}^{\infty}c_{t}z^{t}\right) = 1.
\]
\\
\\
\emph{(VII)} The composition $\psi \circ \varphi(z)$ admits $\sum_{0}^{\infty}d_{t}z^{t}$ as asymptotic expansion, where the coefficients $d_t$ are obtained by formal insertion of the series for $\varphi(z)$ into the series for $\psi(u)$ followed by collection of like powers of $z$. 
\\
\\
\emph{(VIII)} If $S = \{ 0 < \mid z \mid < z_0, \: \theta_1 \leq arg(z) \leq \theta_{2}\}$ with $\theta_{2} > \theta_1$, then the derivative $\varphi'(z)$ admits $\sum_{r=0}^{\infty}ra_{r}z^{r-1}$ as asymptotic expansion in $S^{*} = \{ 0 < \mid z \mid < z_0, \: \theta_1 < \theta_{1}^{*} \leq arg(z) \leq \theta_{2}^{*} < \theta_{2} \}$.
\end{Theorem}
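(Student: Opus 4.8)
The plan is to establish (I)--(VIII) in roughly the stated order, since (II) and (III) rest on (I), and (VI)--(VII) rest on (V). The only genuinely analytic ingredient is a Cauchy estimate on a shrinking family of subsectors; everything else is algebraic bookkeeping with truncated power series and their remainders.

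First I would prove (I). Writing $\varphi(z) = \sum_{n=0}^{k-1} a_n z^n + R_k(z)$ with $|R_k(z)| \le A_k |z|^k$ on a proper subsector $S'$, I would pass to a strictly smaller subsector $S''$: there is a constant $c>0$, depending only on the opening angles, such that the closed disc $\overline{D(z, c|z|)}$ lies in $S'$ for every $z \in S''$ close enough to the vertex. The Cauchy integral formula for derivatives then gives $|R_k^{(r)}(z)| \le r!\,(c|z|)^{-r}(1+c)^k A_k |z|^k$; since $\varphi^{(r)}(z)$ differs from $R_k^{(r)}(z)$ only by the $r$-th derivative of a polynomial of degree less than $k$, the choice $k=r+1$ yields $\varphi^{(r)}(z) = r!\,a_r + O(|z|)$ on $S''$, which is (I). Running the same estimate with $k$ arbitrary and differentiating the partial sum termwise gives (VIII); the restriction there to a sector strictly interior in argument is exactly what allows the Cauchy disc to fit. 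From (I), assertion (III) is immediate (the coefficients are intrinsically $a_r = \frac{1}{r!}\lim_{z\to 0}\varphi^{(r)}(z)$), and (II) follows by noting $\varphi$ is bounded near $0$, hence extends holomorphically across $0$ by Riemann's theorem, after which $\lim_{z\to 0}\varphi^{(r)}(z) = \varphi^{(r)}(0)$.

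Next the algebraic part. Item (IV), on the common sector $S\cap T$, is the triangle inequality. For (V) I would write $\varphi = P + R$ and $\psi = Q + \widetilde R$ with $P,Q$ the degree-$(k-1)$ truncations and $|R|,|\widetilde R| = O(|z|^k)$, expand $\varphi\psi - \sum_{r<k}c_r z^r = \big(PQ - \sum_{r<k}c_r z^r\big) + P\widetilde R + QR + R\widetilde R$, and check each term is $O(|z|^k)$, using that $P,Q$ are bounded near $0$ and that $PQ$ agrees with $\sum_{r<k}c_r z^r$ through order $k-1$. For (VI), since $a_0\neq 0$ and $\varphi(z)\to a_0$ we have $|\varphi(z)|\ge |a_0|/2$ on a subsector near the vertex; writing $g_k$ for the degree-$(k-1)$ truncation of the formal reciprocal, (V) gives $\varphi g_k = 1 + O(|z|^k)$, hence $1/\varphi - g_k = (1-\varphi g_k)/\varphi = O(|z|^k)$. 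For (VII) I would use the implicit compatibility that $\varphi$ maps $S$ into $T$ near the vertices — after shrinking this forces $a_0 = 0$, so $|\varphi(z)|\le C|z|$ — then substitute $u=\varphi(z)$ into $\psi(u) = \sum_{n<k}b_n u^n + \widetilde R_k(u)$, observe each $\varphi(z)^n$ has asymptotic expansion $\hat\varphi(z)^n$ by iterating (V), bound $|\widetilde R_k(\varphi(z))| \le B_k C^k |z|^k$, and collect powers of $z$ up to order $k-1$ in $\sum_{n<k}b_n\hat\varphi(z)^n$ to read off the $d_t$.

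The bulk of this is routine. The one step I expect to require real care is the recurring subsector Cauchy estimate — choosing the radius $c|z|$ so that the disc stays inside the larger subsector — which underlies (I) and (VIII), and therefore (II)--(III); this is precisely why the hypotheses insist on \emph{proper} subsectors, and in (VIII) on a sector strictly interior in argument. A secondary subtlety, more about formulation than difficulty, is pinning down the domain hypotheses left implicit in the statement: the common sector needed in (IV), and the inclusion $\varphi(S)\subset T$ needed for the composition in (VII).
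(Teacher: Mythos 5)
The paper gives no proof of this statement: it is quoted as a known result from Wasow's book (the cited pages), so there is no internal argument to compare yours against. Your outline is correct and is essentially the standard (Wasow) proof: the nested-subsector Cauchy estimate yields (I) and (VIII), hence (II) via Riemann's removable singularity theorem and (III) by determinacy of the coefficients, while (IV)--(VII) are truncation-plus-remainder bookkeeping, with the boundedness $|\varphi|\geq |a_0|/2$ near the vertex handling (VI). The only inaccurate remark is in (VII): the inclusion $\varphi(S)\subset T$ near the vertex does not by itself force $a_0=0$ (the limit $a_0$ could be an interior point of $T$); rather $a_0=0$ must be read as an implicit hypothesis of the statement --- it is precisely what makes the formal insertion defining the coefficients $d_t$ well-defined, and it does hold in every use the paper makes of (VII), where the inner maps are $y\mapsto y^{n}$ or germs fixing the origin.
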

Finally, the next result will be useful later in this paper. 
\begin{Theorem} [\textbf{Borel-Ritt}, \cite{Wasow} pg. 40] \label{Borel-Ritt} 
Let $\hat{\varphi} = \sum a_r z^r \in \mathbb{C}[[z]]$ be a formal power series and $S$ a sector in $\mathbb{C}$. Then there exists a function $\varphi$ holomorphic in $S$ such that $\varphi$ admits $\hat{\varphi}$ as asymptotic expansion in $S$.
\end{Theorem}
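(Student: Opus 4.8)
The plan is to prove Theorem~\ref{Borel-Ritt} by the classical method of convergence factors, preceded by a ramified change of variable that reduces the problem to a sector contained in a half-plane. First I would perform the reduction. Given $S=\{0<|z|<R,\ \alpha_1<\arg z<\alpha_2\}$, choose $k\in\mathbb{N}$ so large that $(\alpha_2-\alpha_1)/k<\pi$ and set $S'=\{0<|w|<R^{1/k},\ \alpha_1/k<\arg w<\alpha_2/k\}$. The map $w\mapsto w^k$ is a biholomorphism of $S'$ onto $S$, and directly from the definition a holomorphic $\varphi$ on $S$ admits $\sum_r a_r z^r$ as asymptotic expansion on $S$ if and only if $w\mapsto\varphi(w^k)$ admits the formal series $\sum_r a_r w^{kr}\in\mathbb{C}[[w]]$ as asymptotic expansion on $S'$ (since $|w^k|=|w|^k$ and proper subsectors correspond under the map). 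Hence it suffices to realize an arbitrary formal series on a sector of opening $<\pi$; after a rotation $z\mapsto e^{i\delta}z$ (which only multiplies each $a_r$ by $e^{ir\delta}$) and shrinking $R$ so that $R\le 1$, I may assume $S\subset\{\,\mathrm{Re}\,z>0,\ 0<|z|\le 1\,\}$.

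Next comes the construction. With $\hat\varphi=\sum_{r\ge 0}a_r z^r$, I would pick positive numbers $b_r\downarrow 0$ with $|a_r|\,b_r\le 2^{-r}$ for all $r$, and define
\[
\varphi(z)=\sum_{r=0}^{\infty}a_r z^r\bigl(1-e^{-b_r/z}\bigr).
\]
On $\{\mathrm{Re}\,z>0\}$ one has $\mathrm{Re}(b_r/z)>0$, so estimating $1-e^{-\zeta}=\int_0^1\zeta e^{-t\zeta}\,dt$ gives $|1-e^{-b_r/z}|\le b_r/|z|$; hence the $r$-th term is bounded by $|a_r|\,b_r\,|z|^{r-1}\le 2^{-r}|z|^{r-1}$ for $r\ge 1$ and by $|a_0|$ for $r=0$. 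Since $|z|\le 1$, the series converges normally on $S$ and defines a holomorphic function there.

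Finally I would verify the asymptotic estimate. Fix $N\in\mathbb{N}$ and a proper subsector $S'$ of $S$; on $S'$ the argument of $z$ stays bounded away from $\pm\pi/2$, so $\mathrm{Re}(1/z)\ge\eta/|z|$ on $S'$ for some $\eta>0$, whence $|e^{-b_r/z}|\le e^{-b_r\eta/|z|}$. Writing
\[
\varphi(z)-\sum_{r=0}^{N-1}a_r z^r=-\sum_{r=0}^{N-1}a_r z^r e^{-b_r/z}+\sum_{r\ge N}a_r z^r\bigl(1-e^{-b_r/z}\bigr),
\]
the first (finite) sum is bounded on $S'$ by $\sum_{r<N}|a_r|\,|z|^r e^{-b_r\eta/|z|}$, and each summand is $O(|z|^N)$ because $t\mapsto t^{r-N}e^{-b_r\eta/t}$ is bounded on $(0,1]$; in the second sum the term $r=N$ is bounded by $|a_N||z|^N$ (using $|1-e^{-b_N/z}|\le 1$) and the terms $r\ge N+1$ by $\sum_{r\ge N+1}|a_r|\,b_r\,|z|^{r-1}\le\sum_{r\ge N+1}2^{-r}|z|^{N}\le 2^{-N}|z|^{N}$. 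Adding these bounds yields $\bigl|\varphi(z)-\sum_{r<N}a_r z^r\bigr|\le A_N|z|^N$ on $S'$, which is exactly the required asymptotic expansion. I expect the only delicate point to be the bookkeeping in this last step—splitting off the $r=N$ term so as not to lose a power of $|z|$—and checking that the ramified reduction $w\mapsto w^k$ is compatible with the definition of asymptotic expansion on proper subsectors; everything else is routine.
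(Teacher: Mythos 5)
The paper offers no proof of this statement at all — it is imported verbatim from Wasow's book with a page citation — so there is no internal argument to compare against; your proposal is the classical convergence-factor proof, essentially the one found in the cited source and the standard literature, and it is correct in substance. The ramified reduction is fine (the direction you actually need, transferring the expansion of $w\mapsto\varphi(w^k)$ on $S'$ back to $S$, works because the partial sums of the gapped series $\sum a_r w^{kr}$ at the intermediate orders $k(N-1)<m\le kN$ all coincide with $\sum_{r<N}a_rw^{kr}$), and the final estimate, including the careful splitting off of the $r=N$ term, is carried out correctly. Two small repairs are needed. First, $|1-e^{-\zeta}|\le 1$ for $\mathrm{Re}\,\zeta>0$ is false (take $\zeta$ close to $i\pi$); the correct uniform bound is $2$, which only changes the constant $A_N$. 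Second, ``shrinking $R$ so that $R\le 1$'' is not legitimate as stated: the theorem asks for $\varphi$ holomorphic on all of $S$, whose radius is the given $R$, and with your choice $|a_r|b_r\le 2^{-r}$ the series $\sum |a_r|b_r|z|^{r-1}$ need not converge for $|z|$ near a radius larger than $2$. This is fixed either by also rescaling $z\mapsto z/R$ in the preliminary normalization (which merely replaces $a_r$ by $a_rR^r$, exactly as your rotation replaces them by $e^{ir\delta}a_r$) or by choosing $b_r$ with $|a_r|\,b_r\,\max(1,R)^{\,r}\le 2^{-r}$, after which the same estimates give normal convergence on the whole sector. With these adjustments the proof is complete.
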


\begin{Example}[\cite{Loday}, pg. 8] \label{Exe3}\rm{  
An interesting example of function with asymptotic expansion is given by a classical differential equation
\begin{eqnarray}
x^2\frac{dy}{dx} + y = x. \label{EulerEq}
\end{eqnarray}
The \ref{EulerEq} equation is called \textit{Euler equation}. The divergent series $\widetilde{E}(x) = \sum_{n \leq 0}(-1)^n n! x^{n+1}$ is the unique power series solution of the Euler equation. The series $\widetilde{E}$ is called \textit{Euler series} and it is divergent for all $x \neq 0$. The function
\[
E(x) = \int_{0}^{x} \exp\left(-\frac{1}{t} + \frac{1}{x}\right)\frac{dt}{t} = \int_{0}^{+\infty} \frac{e^{-\frac{\\xi}{x}}}{1 + \xi}d\xi.
\]
admits $\widetilde{E}(x)$ as asymptotic expansion on the sector $S = \{x \in \mathbb{C}; Re(x)>0 \}.$
}
\end{Example}

\section{Germs of diffeomorphisms and invariant sectors}
We denote by $Diff(\mathbb{C},0)$ the group of germs of diffeomorphism fixing the origin. Consider $f \in Diff(\mathbb{C},0)$ given by $f(z) = \lambda z +a_{k+1} z^{k+1} + \dots$, $k \geq 1,
\, \lambda \in \mathbb{C}^*$. We have the following classification:
\\
\\
\noindent (i) \textit{Hyperbolic Case:}\quad $|\la| \ne 1$; \\
\\
\noindent (ii) \textit{Elliptic Case:}\quad $|\la|=1$, \,\, $\lambda ^{k} \neq 1$, for all $k \in \mathbb{N} \setminus \{0\}$;
\\
\\
\noindent (iii)\, \textit{Parabolic Case:}\quad $\la^k=1$
for some $k \in \bn \setminus \{0\}$.
\\

\begin{Definition} {\rm
Let $f$ be a germ of holomorphic diffeomorphism fixing $0$. We say that $f$ \textit{admits an proper invariant sector} $S \subsetneqq \mathbb{C}^{*}$, if for a representative $f_{U} :U \rightarrow f(U)$ of $f$ there is a sector $S \subset U$ with vertex at $0$ such that $f(S) \subset S$. In this case we also say that $S$ is \textit{$f$-invariant}. 
}
\end{Definition}
Invariant sectors always exist in the case of germs tangent to the identity.

\begin{Theorem}[Camacho \cite{Camacho parabolic} pg. 84, Theorem 1] \label{difeoparab}
Let $f\colon (\mathbb{C},0) \to (\mathbb{C},0)$
be a germ of a holomorphic diffeomorphism tangent to the identity
$f(z) = z + \sum\limits_{j\ge 2} a_jz^j$, $a_2 \ne 0$. Then there
exist sectors $S^+$ and $S^-$ with vertices at $0 \in \mathbb{C}$, angles
$\pi-\te_0$ (where $0 < \te_0 < \pi/2)$ and opposite bisectrices in
such a way that:
\begin{itemize}
\item[{\rm(i)}] $f(S^+) \subset S^+$, $\lim\limits_{n\to+\infty} f^n(z)=0$, $\forall\, z \in S^+$
\item[{\rm(ii)}] $f^{-1}(S^{-}) \subset S^-$, $\lim\limits_{n\to+\infty} f^{-n}(z)=0$, $\forall\, z \in S^-$
\end{itemize}

\end{Theorem}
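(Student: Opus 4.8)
The plan is to reduce $f$, by a linear conjugacy, to the normal form $f(z) = z - z^2 + O(z^3)$, to exhibit a concrete invariant sector $S^+$ for this normal form, and then to obtain $S^-$ by applying the same construction to $f^{-1}$. The coordinate $w = 1/z$ makes the picture transparent: conjugating by $z \mapsto -z/a_2$ brings $f$ to $f(z) = z - z^2 + O(z^3)$, injective on a disc $\{|z| < \rho_0\}$, and in the variable $w = 1/z$ one computes $F(w) := 1/f(1/w) = w + 1 + O(1/w)$ near $\infty$, i.e.\ a small perturbation of the translation $w \mapsto w + 1$, which drags points towards $\infty$ (it is here that $a_2 \ne 0$ is used: the multiplicity is exactly $2$, hence one attracting and one repelling sector). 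Since a linear conjugacy only rotates and rescales the resulting sectors — preserving their angles and the antipodality of their bisectrices — this normalization is harmless, and it places the bisectrix of $S^+$ along the ray $\arg z = \arg(-1/a_2)$.

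Concretely, fix $0 < \theta_0 < \pi/2$, set $\phi_0 = \tfrac{\pi - \theta_0}{2} < \tfrac{\pi}{2}$, and take $S^+ = \{0 < |z| < r,\ |\arg z| < \phi_0\}$ with $r = r(\theta_0)$ small. For the modulus: on $S^+$ one has $\mathrm{Re}(z) = |z|\cos(\arg z) \ge |z|\cos\phi_0 > 0$, so $|f(z)/z|^2 = |1 - z + O(z^2)|^2 = 1 - 2\mathrm{Re}(z) + O(|z|^2) \le 1 - |z|\cos\phi_0 < 1$ for $|z|$ small. For the argument: $f(z)/z = 1 - z + O(z^2)$ gives $\arg f(z) = \arg z - |z|\sin(\arg z) + \epsilon(z)$ with $|\epsilon(z)| \le K|z|^2$, $K$ depending only on $f$; when $\tfrac{\phi_0}{2} \le |\arg z| < \phi_0$ the term $-|z|\sin(\arg z)$ has modulus $\ge |z|\sin(\phi_0/2)$ and pushes $|\arg f(z)|$ strictly below $|\arg z|$ as soon as $r < \sin(\phi_0/2)/K$, while for $|\arg z| < \tfrac{\phi_0}{2}$ one simply has $|\arg f(z)| \le \tfrac{\phi_0}{2} + K r^2 < \phi_0$ once $r < \sqrt{\phi_0/(2K)}$. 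Hence $|\arg f(z)| < \phi_0$ throughout, so $f(S^+) \subset S^+$; iterating, $f^n(z) \in S^+$ with $|f^n(z)|$ strictly decreasing, and the modulus estimate forces $f^n(z) \to 0$. This proves~(i). For~(ii), apply~(i) to the germ $f^{-1}(z) = z + z^2 + O(z^3)$, again tangent to the identity with nonzero quadratic coefficient but with attracting direction antipodal to that of $f$; this yields a sector $S^-$ of angle $\pi - \theta_0$ with $f^{-1}(S^-) \subset S^-$, $f^{-n}(z) \to 0$ on $S^-$, and bisectrix opposite to that of $S^+$.

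The expansions of $f(z)/z$ and of $f^{-1}$ and the geometry of the linear conjugacy are routine bookkeeping. The one step requiring genuine care is the argument estimate near the boundary rays $\arg z = \pm\phi_0$: making the invariance \emph{exact} rather than merely asymptotic is what forces $r$ to shrink as $\theta_0$ shrinks, and it is precisely here that choosing the correct attracting direction $\arg(-1/a_2)$ as the bisectrix — i.e.\ the sign of the quadratic coefficient after normalization — is used essentially. Everything else then follows.
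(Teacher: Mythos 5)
Your proposal is correct. Note that the paper does not prove this statement at all: it is quoted as Theorem 1 of Camacho's Astérisque paper (the Leau--Fatou flower theorem in the tangent-to-the-identity, multiplicity-two case), so there is no in-paper argument to compare with. Your route is essentially the classical one, but executed by direct modulus and argument estimates on a straight sector in the $z$-variable, whereas the standard references (Camacho, Leau, Fatou) pass to the Abel-type coordinate $w=-1/(a_2 z)$, where $f$ becomes a perturbation of the translation $w\mapsto w+1$ and the invariant regions are perturbed half-planes whose preimages are the petals; you mention this coordinate only as motivation. The trade-off is that your sectors are honest straight sectors with radius $r(\theta_0)$ shrinking as $\theta_0\to 0$ (which is all the statement requires), at the cost of the slightly delicate boundary-ray argument estimate you carry out, while the $w$-coordinate proof gets invariance almost for free but then needs a word to translate the invariant region back into a sector of prescribed opening. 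Your key inequalities check out: $|f(z)/z|^2\le 1-|z|\cos\phi_0$ on the sector, the two-case argument estimate (the case $|\arg z|<\phi_0/2$ implicitly uses $|t-|z|\sin t|\le |t|$, which holds since $|z|\le 1$), the deduction $f^n(z)\to 0$ from the quantitative modulus decay, and the reduction of (ii) to (i) via $f^{-1}(z)=z-a_2z^2+O(z^3)$, whose attracting direction is antipodal to that of $f$, so the bisectrices are opposite as required.
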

  
The two next lemmas deal with the hyperbolic and elliptic cases.

\begin{Remark} \label{difeohyperbolic}
Let $f(z) = \lambda z + O(z^{2})$, with $\mid \lambda \mid \neq 1$ be a hyperbolic germ of a diffeomorphism. Then there is a proper $f$-invariant sector (which is not a disc) only if $ 0< \lambda <1$.
\end{Remark}

\begin{proof}
Indeed, suppose that there is a sector $S = \{z \in \mathbb{C} : 0 < \mid z \mid < R, \alpha_{1} < arg(z) < \alpha_{2}\}$ such that $f(S) \subset S$.
It is known (\cite{bracci} pg. 14, Theorem 2.1 and Corollary 2.3) that there is a diffeomorphism $g$ such that $g \circ f \circ g^{-1}(z) = \lambda z$. Writing $z = re^{i\alpha} $ and $ \lambda = se^{i\beta}$ we have
\begin{eqnarray*}
g \circ f \circ g^{-1}(z) & = & \lambda z \\
& = & rse^{i(\alpha + \beta)}.
\end{eqnarray*}
Therefore, $f(S) \subset S$ only if  $\beta = 0$ and $\lambda = \mid \lambda \mid = s < 1$.
\end{proof}

\begin{Remark} \label{difeoeliptic}
Let $f(z) = \lambda z + O(z^{2})$ be a germ of difeomorphism with $\mid \lambda \mid = 1$, $\lambda ^{k} \neq 1$, for all $k \in \mathbb{N} \setminus \{0\}$. 
Then there are no proper $f$-invariant sectors (which is not a disc).
\end{Remark}

\begin{proof}
Let $S = \{z \in \mathbb{C} : 0 \leq \mid z \mid < R, \alpha_{1} < arg(z) < \alpha_{2}\}$ be a sector in $\mathbb{C}$. Consider the straight lines $r$ and $s$, that pass through the origin of $\mathbb{C}$ associated with the angles $\alpha_{1},\alpha_{2}$ respectively. That is, $r: y = tan(\alpha_{1})x$, and $s: y = tan(\alpha_{2})x$. Take the vectors $v_1 = e^{i\alpha_{1}} \in r$ and $v_2 =  e^{i\alpha_{2}}\in s$. We observe that $f'(0) =  e^{i\theta}$ for some $\theta \in \mathbb{R}$, $\theta \neq 0$ . Thus, 
\[
f'(0)\cdot v_1 =  e^{i\theta} e^{i\alpha_{1}} =  e^{i(\theta + \alpha_{1})}
\]
and
\[
f'(0)\cdot v_2 =  e^{i\theta} e^{i\alpha_{2}} =  e^{i(\theta + \alpha_{2})}.
\]
If $S$ is $f$-invariant then 
\[
\alpha_{1} \leq \theta + \alpha_{1} \leq \alpha_{2}
\]
and
\[
\alpha_{1} \leq \theta + \alpha_{2} \leq \alpha_{2}.
\]
This implies that $0 \leq \theta \leq \alpha_{2}-\alpha_{1}$ and $\alpha_{1} - \alpha_{2} \leq \theta \leq 0$ and, $\theta = 0$. Therefore, $\lambda = 1$, contradiction. 
\end{proof}

\section{Formal power series and change of coordinates}
In this section we prove a normal form for formal power series in one complex variable. We will denote by $\mathbb{C}[[z]]$ the ring of formal power series in the complex variable $z$. 
\begin{Lemma} \label{FormalChange}
Let $\hat{\varphi}(z) = \sum_{j = 1}^{\infty}a_{j}z^{j}$ be a formal power series in $\mathbb{C}[[z]]$. Then there are $\nu \in \mathbb{N}$ and an invertible formal power series $ \hat{\psi} \in \mathbb{C}[[z]]$ such that
\[
\hat{\varphi} \circ \hat{\psi}(z) = z^{\nu}
\]
for all $z \in \mathbb{C}$.
\end{Lemma}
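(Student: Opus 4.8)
The plan is to reduce the statement to two elementary facts about the ring $\mathbb{C}[[z]]$: every power series with nonzero constant term has a formal $\nu$-th root, and every power series of the form $\mu z + O(z^2)$ with $\mu \neq 0$ has a formal compositional inverse. We may assume $\hat\varphi \not\equiv 0$, and since $\hat\varphi$ has no constant term we put $\nu = \ord(\hat\varphi) \geq 1$, the smallest index with $a_\nu \neq 0$. Factoring out the leading monomial,
\[
\hat\varphi(z) = a_\nu\, z^\nu\, \hat u(z), \qquad \hat u(z) = 1 + \sum_{j\geq 1}\frac{a_{\nu+j}}{a_\nu}\, z^j \in \mathbb{C}[[z]],
\]
we have $\hat u(0) = 1$.

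First I would construct a formal $\nu$-th root $\hat w = 1 + \sum_{j\geq 1} b_j z^j$ of $\hat u$. Comparing the coefficients of $z^j$ on both sides of $\hat w^{\,\nu} = \hat u$ gives, for each $j \geq 1$, an identity of the shape $\nu\, b_j = (\text{the }z^j\text{-coefficient of }\hat u) - P_j(b_1,\dots,b_{j-1})$ with $P_j$ a universal polynomial; since $\nu$ is invertible in $\mathbb{C}$ this recursively determines all the $b_j$. Picking also $\mu \in \mathbb{C}^*$ with $\mu^\nu = a_\nu$, set
\[
\hat h(z) = \mu\, z\, \hat w(z) \in \mathbb{C}[[z]].
\]
Then $\hat h(0)=0$, $\hat h'(0) = \mu \neq 0$, so $\hat h$ is an invertible formal power series, and by construction $\hat\varphi(z) = \big(\mu\, z\, \hat w(z)\big)^\nu = \hat h(z)^\nu$; that is, $\hat\varphi = (z^\nu)\circ\hat h$ as an identity in $\mathbb{C}[[z]]$.

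Next I would take the formal compositional inverse $\hat\psi := \hat h^{-1}$, which exists and is an invertible formal power series (i.e.\ $\hat\psi(0) = 0$, $\hat\psi'(0) = \mu^{-1} \neq 0$): writing $\hat\psi = \sum_{j\geq 1} c_j z^j$ and imposing $\hat h(\hat\psi(z)) = z$, the coefficient of $z$ forces $c_1 = \mu^{-1}$, and for $j \geq 2$ the coefficient of $z^j$ determines $c_j$ from $c_1,\dots,c_{j-1}$, again because $\hat h'(0)\neq 0$. Using associativity of formal composition,
\[
\hat\varphi \circ \hat\psi \;=\; \big((z^\nu)\circ\hat h\big)\circ\hat\psi \;=\; (z^\nu)\circ\big(\hat h\circ\hat\psi\big) \;=\; (z^\nu)\circ z \;=\; z^\nu,
\]
which is the desired identity, the integer produced being $\nu = \ord(\hat\varphi)$.

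I do not expect any genuine obstacle here: the argument is purely formal, and the only two places needing care — the recursive solvability of $\hat w^{\,\nu} = \hat u$ and of $\hat h\circ\hat\psi = z$ — rely only on the invertibility in $\mathbb{C}$ of multiplication by $\nu$ and by $\mu$ respectively. If a more streamlined exposition is preferred, one can instead quote the formal implicit function theorem (Hensel's lemma for $\mathbb{C}[[z]]$) to obtain $\hat w$ and $\hat\psi$ directly.
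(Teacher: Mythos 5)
Your proposal is correct and follows essentially the same route as the paper: factor $\hat{\varphi}(z)=a_{\nu}z^{\nu}\hat{u}(z)$ with $\hat{u}(0)=1$, extract a formal $\nu$-th root to build a series of order one (your $\hat{h}$ is the paper's $\hat{\gamma}(z)=bz(1+\hat{\mu}(z))^{1/n_{0}}$), and compose with its formal inverse. The only difference is cosmetic: the paper cites Niven's root theorem and Cartan's formal inversion theorem where you re-derive both by recursive coefficient comparison, and you make explicit the (implicitly needed) assumption $\hat{\varphi}\not\equiv 0$.
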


For the proof of this lemma, we will need some results about formal power series.


\begin{Theorem} [\cite{cartanBook} pg. 15, Theorem 7.1] \label{teolagran}
Let $\hat{\varphi}(z) = \sum_{j = 0}^{\infty}a_{j}z^{j} \in \mathbb{C}[[z]]$. There exists $\hat{\psi} \in \mathbb{C}[[z]]$ such that  $ \hat{\varphi}(\hat{\psi}(u))= u $ if and only if $a_{0} = 0$ and $\hat{\varphi}'(0) = a_{1} \neq 0$. If such a $\hat{\psi}$ exists then it is unique and $\hat{\psi}(\hat{\varphi}(z)) = z$. 
\end{Theorem}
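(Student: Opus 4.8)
The statement is an existence–uniqueness result for the compositional inverse of a formal series, so the plan is to read composition as a monoid operation on the set $\mathfrak{m} = z\,\mathbb{C}[[z]]$ of series with zero constant term (on which composition of formal power series is well defined and associative, the identity element being the series $z$), and to show that $\hat\varphi$ is invertible in this monoid precisely when $a_0 = 0$ and $a_1 \neq 0$. This reduces all three assertions of the theorem to statements about invertibility in $\mathfrak{m}$.

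First I would dispose of the ``only if'' direction. If some $\hat\psi$ satisfies $\hat\varphi(\hat\psi(u)) = u$, then for the composition to be a legitimate element of $\mathbb{C}[[z]]$ we must already have $\hat\psi(0) = 0$; evaluating at $u = 0$ then gives $a_0 = \hat\varphi(0) = 0$. Differentiating the identity $\hat\varphi(\hat\psi(u)) = u$ by the formal chain rule and evaluating at $u = 0$ yields $\hat\varphi'(0)\,\hat\psi'(0) = 1$, whence $a_1 = \hat\varphi'(0) \neq 0$ (and incidentally $\hat\psi'(0) = 1/a_1$).

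For the ``if'' direction and for uniqueness I would construct $\hat\psi = \sum_{k\ge 1} b_k u^k$ coefficient by coefficient. Substituting into $\hat\varphi(\hat\psi(u)) = \sum_{j\ge 1} a_j \hat\psi(u)^j = u$ and collecting powers of $u$, the coefficient of $u^1$ is $a_1 b_1 = 1$, forcing $b_1 = 1/a_1$ (this is where $a_1\neq 0$ enters), while for each $k \ge 2$ the coefficient of $u^k$ has the shape $a_1 b_k + P_k(a_1,\dots,a_k;b_1,\dots,b_{k-1}) = 0$, the only contribution of the highest unknown $b_k$ coming from the linear term $a_1\hat\psi(u)$. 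Since the factor multiplying $b_k$ is $a_1 \neq 0$, this determines $b_k = -P_k/a_1$ uniquely in terms of the previously computed coefficients. The recursion both produces $\hat\psi$ and shows it is the unique series with zero constant term solving the equation, giving existence and uniqueness simultaneously.

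It remains to upgrade the right inverse to a two-sided inverse. Because $\hat\psi$ itself lies in $\mathfrak{m}$ with $\hat\psi'(0) = 1/a_1 \neq 0$, the existence direction applies to $\hat\psi$ as well, producing $\hat\eta \in \mathfrak{m}$ with $\hat\psi\circ\hat\eta = z$. Using associativity of composition,
\[
\hat\varphi = \hat\varphi \circ z = \hat\varphi \circ(\hat\psi\circ\hat\eta) = (\hat\varphi\circ\hat\psi)\circ\hat\eta = z\circ\hat\eta = \hat\eta,
\]
so $\hat\eta = \hat\varphi$ and therefore $\hat\psi\circ\hat\varphi = \hat\psi\circ\hat\eta = z$, which is the asserted identity $\hat\psi(\hat\varphi(z)) = z$. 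The one genuinely delicate point is the bookkeeping in the recursion: one must verify that in the coefficient of $u^k$ the unknown $b_k$ occurs only through $a_1 b_k$ and that all remaining terms involve only $b_1,\dots,b_{k-1}$ together with $a_1,\dots,a_k$, so that the linear solvability is never obstructed; once this is in hand everything else is formal manipulation within the composition monoid.
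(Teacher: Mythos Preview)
Your argument is correct and is the standard proof of the formal inverse function theorem: the necessity via the chain rule, the recursive construction of the $b_k$ exploiting that the only occurrence of $b_k$ in the $u^k$-coefficient comes from $a_1\hat\psi(u)$, and the upgrade from one-sided to two-sided inverse via associativity are all sound. The one point you flag as ``delicate''---that for $j\ge 2$ the coefficient of $u^k$ in $\hat\psi(u)^j$ involves only $b_1,\dots,b_{k-1}$---follows immediately since each monomial $b_{i_1}\cdots b_{i_j}$ with $i_1+\cdots+i_j=k$ and $i_l\ge 1$ forces $i_l\le k-1$.

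There is nothing to compare against in the paper: Theorem~\ref{teolagran} is stated there as a citation to Cartan's book and is not proved in the paper itself. Your proof is exactly the argument one finds in Cartan (and in most treatments of formal power series), so in that sense it matches the intended reference.
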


\begin{Theorem} [\cite{Ivan} pg. 874, Theorem 3]\label{RootsPowerSeries}
Let  $\hat{\varphi}(z) = \sum_{j = 0}^{\infty}a_{j}z^{j}$ be a formal power series, with $a_{0} = 1$ and let $n_{0}$ be any positive integer. Then there is a unique $\hat{\psi}(z) = \sum_{j = 0}^{\infty}b_{j}z^{j}$, with $b_{0} = 1$ such that $(\hat{\psi}(z))^{n_{0}} = \hat{\varphi}(z)$.
\end{Theorem}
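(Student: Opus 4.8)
The plan is to determine the coefficients $b_j$ of $\hat\psi$ one at a time by comparing coefficients of equal powers of $z$ on the two sides of the identity $(\hat\psi(z))^{n_0} = \hat\varphi(z)$. Since everything takes place in the ring $\mathbb{C}[[z]]$, this is a purely algebraic procedure and no question of convergence arises; the whole point is to show that the recursion for the $b_j$ is uniquely solvable.

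First I would treat the constant term. Comparing the coefficients of $z^0$ gives $b_0^{n_0} = a_0 = 1$. A priori this has $n_0$ distinct solutions (the $n_0$-th roots of unity), which is exactly the source of non-uniqueness; imposing the normalization $b_0 = 1$ singles out one of them and is what makes the statement an honest uniqueness result.

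The heart of the argument is the following algebraic observation about the higher coefficients. For $m \geq 1$, expanding $(\hat\psi)^{n_0} = \bigl(\sum_j b_j z^j\bigr)^{n_0}$ and collecting the coefficient of $z^m$, one writes $m = i_1 + \dots + i_{n_0}$ with $i_\ell \geq 0$. The coefficient $b_m$ can occur in such a product only when some $i_\ell = m$, which forces all the remaining indices to vanish; there are exactly $n_0$ such arrangements, each contributing $b_m b_0^{n_0-1}$. Every other monomial involves only $b_0, \dots, b_{m-1}$. Hence
\[
[z^m]\,(\hat\psi)^{n_0} = n_0 b_0^{n_0-1} b_m + Q_m(b_0, \dots, b_{m-1}),
\]
where $Q_m$ is a fixed polynomial (with coefficients depending only on $n_0$) in the lower-order unknowns. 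Using $b_0 = 1$, the coefficient of $b_m$ is exactly $n_0$, which is invertible in $\mathbb{C}$. Equating this with $a_m = [z^m]\hat\varphi$ therefore yields
\[
b_m = \frac{1}{n_0}\bigl(a_m - Q_m(b_0, \dots, b_{m-1})\bigr),
\]
so $b_m$ is determined, and uniquely determined, by $a_m$ and the previously constructed coefficients $b_0, \dots, b_{m-1}$. By induction on $m$ this simultaneously proves existence and uniqueness of $\hat\psi$.

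The step I expect to require the most care is the combinatorial claim that the $z^m$ coefficient of $(\hat\psi)^{n_0}$ isolates $b_m$ linearly with the nonzero coefficient $n_0 b_0^{n_0-1}$; once this \emph{leading term} analysis is in place the recursion is transparent. It is worth emphasizing that the invertibility of the integer $n_0$ in $\mathbb{C}$ is used in an essential way, so the argument is genuinely one about formal power series over a field of characteristic zero. Alternatively, one could give a non-recursive proof by setting $\hat\psi = \exp\bigl(\tfrac{1}{n_0}\log\hat\varphi\bigr)$, using the formal logarithm of $\hat\varphi = 1 + (\hat\varphi - 1)$ together with the uniqueness of the formal exponential and logarithm to obtain uniqueness; but the coefficient recursion above is more elementary and fits the style of Theorem \ref{teolagran}.
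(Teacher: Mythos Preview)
Your argument is correct: the recursion isolating $b_m$ with leading coefficient $n_0 b_0^{n_0-1}=n_0$ is exactly the standard way to extract $n_0$-th roots in $\mathbb{C}[[z]]$, and your induction handles existence and uniqueness simultaneously.

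There is nothing to compare with, however, because the paper does not give its own proof of this statement. Theorem~\ref{RootsPowerSeries} is quoted from Niven \cite{Ivan} (pg.~874, Theorem~3) and used as a black box in the proof of Lemma~\ref{FormalChange}; the paper supplies no argument of its own. Your recursive proof is precisely the one Niven gives. The alternative you mention via $\hat\psi = E\bigl(\tfrac{1}{n_0}L(\hat\varphi)\bigr)$ would also slot nicely into the paper, since the required formal logarithm and exponential, and their mutual inverse property, are already recorded in Definition~\ref{defLogExp} and Theorem~\ref{Shift}.
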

\begin{Remark} \label{remarkRoots}
Theorem \ref{RootsPowerSeries} allows us to define $(\hat{\varphi}(z))^{\frac{1}{n_{0}}} = \hat{\psi}(z).$
\end{Remark}
\begin{Definition} [\cite{Ivan} pg. 878]{\rm \label{defLogExp}
Let $\hat{\varphi}(z) = \sum_{j=0}^{\infty}a_{j}z^{j}$ be a formal power series with $a_{0} = 1$ and $\hat{\mu}(z) = \sum_{j=1}^{\infty}a_{j}z^{j}$, where $\hat{\varphi} = 1 + \hat{\mu}$. The \textit{formal logarithm} of $\hat{\varphi}$ is
\[
L(\hat{\varphi}) = L(1 + \hat{\mu}) = \hat{\mu} - \frac{1}{2}\hat{\mu}^{2} + \frac{1}{4}\hat{\mu}^{4} + \dots = \sum_{j=1}^{\infty}(-1)^{j+1}\frac{\hat{\mu}^{j}}{j}.
\]
Given $\hat{\psi}(z) = \sum_{j=0}^{\infty}b_{j}z^{j}$ where $b_{0} = 0$, the \textit{formal exponential} of $\hat{\psi}$ is 
\[
E(\hat{\psi}) = 1 + \hat{\psi} + \frac{\hat{\psi}^{2}}{2!} + \frac{\hat{\psi}^{3}}{3!} + \dots = \sum_{n=0}^{\infty} \frac{\hat{\psi}^{n}}{n!}.
\]
}
\end{Definition} 
\begin{Theorem} [\cite{Ivan} pg. 880, Theorem 19]\label{Shift} 
Let $\hat{\varphi},\hat{\psi} \in \mathbb{C}[[z]]$ be formal power series  such that $\hat{\varphi}(z) = \sum_{j=0}^{\infty}a_{j}z^{j}$, with $a_{0} = 1$ and  $\hat{\psi}(z) = \sum_{j=1}^{\infty}b_{j}z^{j}$, with $b_{0} = 0$. Then $L(E(\hat{\psi})) = \hat{\psi}$ and $E(L(\hat{\varphi})) = \hat{\varphi}$.
\end{Theorem}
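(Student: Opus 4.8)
The plan is to mimic the classical proofs that $\log\circ\exp=\mathrm{id}$ and $\exp\circ\log=\mathrm{id}$, replacing analytic estimates by formal differentiation in $\mathbb{C}[[z]]$ together with the elementary fact that, over $\mathbb{C}$, a formal series with vanishing derivative is constant. First I would record that both operators are well defined: since $\hat{\mu}$ and $\hat{\psi}$ have no constant term, the powers $\hat{\mu}^{j}$ and $\hat{\psi}^{n}$ have order at least $j$ and $n$, so the defining series for $L(\hat{\varphi})$ and $E(\hat{\psi})$ converge in the $(z)$-adic topology and each coefficient is a finite sum. This same vanishing of constant terms is what will justify the termwise differentiation and the formal geometric expansion used below.

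The heart of the argument is two differentiation identities. Differentiating $E(\hat{\psi})=\sum_{n\ge 0}\hat{\psi}^{n}/n!$ termwise and shifting the index gives
\[
E(\hat{\psi})' \;=\; \sum_{n\ge 1}\frac{n\,\hat{\psi}^{\,n-1}\hat{\psi}'}{n!} \;=\; \hat{\psi}'\sum_{m\ge 0}\frac{\hat{\psi}^{\,m}}{m!} \;=\; \hat{\psi}'\,E(\hat{\psi}).
\]
Similarly, writing $\hat{\varphi}=1+\hat{\mu}$ and differentiating $L(\hat{\varphi})=\sum_{j\ge1}(-1)^{j+1}\hat{\mu}^{j}/j$ yields
\[
L(\hat{\varphi})' \;=\; \hat{\mu}'\sum_{j\ge1}(-1)^{j+1}\hat{\mu}^{\,j-1} \;=\; \hat{\mu}'\sum_{k\ge0}(-\hat{\mu})^{k} \;=\; \frac{\hat{\mu}'}{1+\hat{\mu}} \;=\; \frac{\hat{\varphi}'}{\hat{\varphi}},
\]
where the geometric series is legitimate because $\hat{\mu}(0)=0$, where $\hat{\mu}'=\hat{\varphi}'$, and where division by $\hat{\varphi}$ is allowed since $\hat{\varphi}(0)=1\neq0$.

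With these identities in hand I would conclude using the principle that $\hat{h}'=0$ forces $\hat{h}$ to be constant (from $n c_{n}=0$ for $n\ge1$, valid in characteristic zero). For $L(E(\hat{\psi}))=\hat{\psi}$: setting $\hat{h}=L(E(\hat{\psi}))$, the two identities give $\hat{h}'=E(\hat{\psi})'/E(\hat{\psi})=\hat{\psi}'$, so $(\hat{h}-\hat{\psi})'=0$; since both $\hat{h}$ and $\hat{\psi}$ have zero constant term, the constant is $0$, hence $\hat{h}=\hat{\psi}$. For $E(L(\hat{\varphi}))=\hat{\varphi}$: setting $\hat{g}=E(L(\hat{\varphi}))$, we get $\hat{g}'=L(\hat{\varphi})'\hat{g}=(\hat{\varphi}'/\hat{\varphi})\hat{g}$, and the quotient rule gives $(\hat{g}/\hat{\varphi})'=(\hat{g}'\hat{\varphi}-\hat{g}\hat{\varphi}')/\hat{\varphi}^{2}=0$, so $\hat{g}/\hat{\varphi}$ is constant; comparing constant terms (both equal $1$) yields $\hat{g}=\hat{\varphi}$.

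The only real obstacle here is bookkeeping rigor rather than conceptual difficulty: one must confirm that termwise differentiation, the index shifts, the geometric expansion of $1/(1+\hat{\mu})$, and the quotient rule are all valid operations in the $(z)$-adic topology on $\mathbb{C}[[z]]$, which they are precisely because $\hat{\mu}$ and $\hat{\psi}$ have vanishing constant term. I would also flag that working over $\mathbb{C}$ is essential: characteristic zero is used both to integrate $\hat{h}'=0$ back to a constant and, implicitly, in the coefficients $1/j$ and $1/n!$ defining $L$ and $E$.
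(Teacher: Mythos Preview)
Your proof is correct. Note, however, that the paper does not supply its own proof of this statement: Theorem~\ref{Shift} is quoted from Niven's expository article \cite{Ivan} (Theorem~19 there), and the paper only uses it via Remark~\ref{remarkShift}. So there is no in-paper proof to compare against. For what it is worth, your argument via the formal differentiation identities $E(\hat{\psi})'=\hat{\psi}'E(\hat{\psi})$ and $L(\hat{\varphi})'=\hat{\varphi}'/\hat{\varphi}$, followed by integrating $\hat h'=0$ back to a constant, is exactly the standard route and is essentially the one Niven takes in the cited reference; the only point to keep tidy is that $E(\hat{\psi})$ has constant term $1$ (so $L$ applies to it) and $L(\hat{\varphi})$ has constant term $0$ (so $E$ applies to it), which you implicitly use and which follow immediately from the definitions.
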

\begin{Remark} {\rm \label{remarkShift}
Theorem \ref{Shift} implies that there is a bijection between the formal power  series such that the constant term  is $1$ and the formal power series such the constant term is $0$.
}
\end{Remark}
\begin{proof}[\textbf{Proof of Lemma \ref{FormalChange}}]
Put $n_{0} = min \{j \in \mathbb{N}; a_{j} \neq 0\}$. Defining 
\[
\hat{\mu}(z) =\sum_{j \geq n_{0}+1}^{\infty} \frac{a_{j}}{a_{n_{0}}} z^{j - n_{0}}
\]
we can write 
\[
\hat{\varphi}(z) = a_{n_{0}}z^{n_{0}}[1 + \hat{\mu}(z)]
\]
with $\hat{\mu}(0) = 0$.
Let $b$ be a $n_{0}$-th root of $a_{n_{0}} \neq 0$. Define $
\hat{\gamma}(z) = bz(1 + \hat{\mu}(z))^{\frac{1}{n_{0}}}$. By Theorem \ref{RootsPowerSeries}, $(1 + \hat{\mu}(z))^{\frac{1}{n_{0}}}$ is a well defined formal power series (see Remark \ref{remarkRoots}) with constant term equals to $1$. Then, $\hat{\gamma}$  
is a well defined power formal series, such that 
\[
\hat{\gamma}(z)^{n_0} = a_{n_{0}}z^{n_{0}}(1 + \hat{\mu}(z))
\]
$\hat{\gamma}(0) = 0$ and $\hat{\gamma}'(0) = b \neq 0$. By Theorem \ref{teolagran} there is a formal power series $\hat{\psi}$, wich is an inverse for $\hat\gamma$. Therefore,
\begin{eqnarray*}
\hat{\varphi}\circ \hat{\psi}(z)&=& a_{n_{0}}(\hat\psi(z))^{n_0}[1 + \hat{\mu}(\hat{\psi}(z))] \\
 &=& (\hat{\gamma} \circ \hat{\psi} (z))^{n_0} \\
 &=& z^{n_{0}}.
\end{eqnarray*}

\end{proof}

\section{Invariance and finiteness} \label{FinitSubgroup}
This section is dedicated to the study of subgroups $G$ of $Diff(\mathbb{C},0)$ admitting an invariant sector $S$ and a suitable holomorphic function $\varphi$ in $S$ which is constant in the pseudorbits of $G$ in $S$. 

\begin{Lemma} \label{FormalInvar}
Let $\varphi:S \rightarrow \mathbb{C}$ be a holomorphic function that admits a nonzero asymptotic expansion $\hat{\varphi} (z)$ in a sector $S \subset \mathbb{C}$. Suppose that there is a holomorphic function $f:U \rightarrow \mathbb{C}$ defined in a neighborhood $U$ of $0 \in \mathbb{C}$ with $f(0)=0$ and $f(S\cap U)\subset S\cap U$. If $\varphi \circ f = \varphi$ in $S\cap U$ then $\hat{\varphi} \circ f = \hat{\varphi}$ in $S\cap U$.
\end{Lemma}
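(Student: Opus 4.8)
The plan is to compare Taylor coefficients at the origin of $\varphi\circ f$ and of $\varphi$, using that both functions admit asymptotic expansions in $S\cap U$ and that asymptotic expansions are unique (Theorem \ref{BusAsymp}(III)). First I would observe that $\varphi$ admits $\hat\varphi$ as asymptotic expansion in $S\cap U$ (a proper subsector of $S$, after possibly shrinking), and that $f$, being holomorphic at $0$ with $f(0)=0$, is in particular holomorphic on a full punctured disc, hence admits its own convergent Taylor series as asymptotic expansion on any sector by Theorem \ref{BusAsymp}(II). By the composition rule, Theorem \ref{BusAsymp}(VII), the function $\varphi\circ f$ admits the formal composition $\hat\varphi\circ f$ (the formal power series obtained by substituting the Taylor series of $f$ into $\hat\varphi$ and collecting powers of $z$) as asymptotic expansion in $S\cap U$.

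Next I would use the hypothesis $\varphi\circ f=\varphi$ on $S\cap U$. Two holomorphic functions that are literally equal on a sector have, by uniqueness (Theorem \ref{BusAsymp}(III)), the same asymptotic expansion there. Therefore the asymptotic expansion of $\varphi\circ f$, which is $\hat\varphi\circ f$, must coincide as a formal power series with the asymptotic expansion of $\varphi$, which is $\hat\varphi$. That is exactly the desired identity $\hat\varphi\circ f=\hat\varphi$ in $\co[[z]]$, which we interpret as an identity of formal series (equivalently, an identity valid on $S\cap U$ in the asymptotic sense). One should note that the statement "$\hat\varphi\circ f=\hat\varphi$ in $S\cap U$" is to be read formally: since $\hat\varphi$ need not converge, the content is the equality of the two formal power series, which is what the uniqueness argument delivers.

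The only genuine point requiring care — and the step I expect to be the main (minor) obstacle — is the bookkeeping around subsectors and the hypothesis that $f$ maps $S\cap U$ into itself, which is what makes the composition $\varphi\circ f$ well defined on $S\cap U$ so that the composition rule applies; I would also need $f(0)=0$ to ensure the formal composition $\hat\varphi\circ f$ makes sense (no constant term is plugged into the infinitely many terms of $\hat\varphi$). Beyond that, everything is a direct invocation of Theorem \ref{BusAsymp}, parts (II), (III) and (VII), so the argument is short. One last remark: the hypothesis that $\hat\varphi$ is \emph{nonzero} is not needed for this lemma — the conclusion holds trivially if $\hat\varphi=0$ — but it is presumably carried along because it matters for the applications in the next section.
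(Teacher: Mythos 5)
Your argument is correct and is essentially the paper's proof: both reduce the statement to the fact that $\varphi=\varphi\circ f$ admits the formal composition $\hat{\varphi}\circ f$ as asymptotic expansion on the sector, and then conclude by uniqueness of asymptotic expansions (Theorem \ref{BusAsymp}, part (III)). The only difference is that where you invoke parts (II) and (VII) of Theorem \ref{BusAsymp} for the composition step, the paper verifies the needed bound directly by a triangle-inequality estimate, using $f(0)=0$ and $f(S\cap U)\subset S\cap U$ exactly as in your closing remark.
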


\begin{proof}
For simplicity of notation we may suppose $S\subset U$. We fix $k \in \mathbb{N}$. Using the asymptotic expansion of $\varphi$ in $S$ there are $B_{k} \in \mathbb{C}$ such that for each $z \in S$
\begin{eqnarray*}
\mid \varphi \circ f(z) - \sum_{j=0}^{k-1} a_{j}[f(z)]^{j} \mid &\leq& B_{k}z^{k}.
\end{eqnarray*}
Since $\hat{\varphi} \circ f(z) =\sum_{j=0}^{\infty} a_{j}[f(z)]^{j}  $ and $\varphi \circ f = \varphi$, so for all $z \in S$, we have

\begin{eqnarray*}
\mid \varphi(z) - \sum_{j=0}^{k-1} a_{j}[f(z)]^{j} \mid &=& \mid \varphi(z) - \sum_{j=0}^{k-1} a_{j}z^{j} + \sum_{j=0}^{k-1} a_{j}z^{j} - \sum_{j=0}^{k-1} a_{j}[f(z)]^{j} \mid \\
&\leq& \mid \varphi(z) -\sum_{j=0}^{k-1} a_{j}z^{j}\mid + \mid \sum_{j=0}^{k-1} a_{j}z^{j} - \sum_{j=0}^{k-1} a_{j}[f(z)]^{j} \mid \\
&=& \mid \varphi(z) -\sum_{j=0}^{k-1} a_{j}z^{j}\mid + \mid \sum_{j=0}^{k-1} a_{j}z^{j} - \varphi(z)+ \varphi(z) - \sum_{j=0}^{k-1} a_{j}[f(z)]^{j} \mid \\
&\leq& \mid \varphi(z) -\sum_{j=0}^{k-1} a_{j}z^{j}\mid + \mid \sum_{j=0}^{k-1} a_{j}z^{j} - \varphi(z)\mid + \mid \varphi(z) - \sum_{j=0}^{k-1} a_{j}[f(z)]^{j} \mid \\
&=& \mid \varphi(z) -\sum_{j=0}^{k-1} a_{j}z^{j}\mid + \mid \varphi(z) - \sum_{j=0}^{k-1} a_{j}z^{j}\mid + \mid \varphi \circ f(z) - \sum_{j=0}^{k-1} a_{j}[f(z)]^{j} \mid \\
&\leq& A_{k}z^{k} + A_{k}z^{k} + B_{k}z^{k} \\
&=& [2A_{k} + B_{k}]z^{k}.
\end{eqnarray*}
We conclude that there is a constant $C_{k} = 2A_{k} + B_{k}$ such that $\mid \varphi(z) - \sum_{j=0}^{k-1} a_{j}[f(z)]^{j} \mid \leq C_{k} z^{k}$, $\forall z \in S$, i.e., $\varphi$ admits $\sum_{j=0}^{\infty} a_{j}[f]^{j} = \hat{\varphi} \circ f $ as asymptotic expansion in $S$. Therefore, by unicity of the asymptotic expansion (see Theorem \ref{BusAsymp}),
\[
\hat{\varphi} \circ f(z) = \hat{\varphi}(z), \forall z \in S.
\]
\end{proof}

Next we stablish a key point in our argumentation concerning groups of germs of diffeomorphisms. Let $\varphi: S \rightarrow \mathbb{C}$ be a non-constant holomorphic function that admits a nonzero asymptotic expansion $\hat{\varphi} (z)$ in a sector $S \subset \mathbb{C}$. We define the \textit{invariance group} of $\varphi$ as 
\[
Inv_{S}(\varphi) = \left\lbrace f \in Diff(\mathbb{C},0) : f(S) \subset S  \textit{ and }\varphi\circ f = \varphi \textit{ in } S \right\rbrace.
\]
This is a subgroup of $Diff(\mathbb{C},0)$.
\begin{Example} \rm{
Consider the function $\varphi(z) = cos\left(\frac{2 \pi}{z}\right)$. Then $Inv(\varphi) = <f,g>$, where $f(z) = -z$ and $g(z) = \frac{1}{1-z}$. The invariance group $Inv(\varphi)$ is not finite. The point is that $\varphi(z)$ does not admit an asymptotic expansion. Let us prove this. For every $z \neq 0$, 
\begin{eqnarray*}
\varphi'(z) &=& \frac{2\pi}{z^{2}}sin\left(\frac{2\pi}{z}\right).
\end{eqnarray*} 
Consider the sector $S = \{z \in \mathbb{C} : 0 \leq \mid z \mid < R, - \alpha_{0} <  arg(z) < \alpha_{0}\}$. For every $0 < \alpha < \alpha_{0}$, let $S^{*} \subset S$ be a proper subsector $S^{*} = \{z \in \mathbb{C} : 0 \leq \mid z \mid \leq r < R, - \alpha \leq  arg(z) \leq \alpha\}$.  
Let $(z_{n})_{n \in \mathbb{N}}$ be a sequence of complex numbers given by $z_{n} = \frac{r}{n}( 1+ i \tan \alpha)$. Then
\begin{eqnarray} \label{eq1Ex}
\lim_{n \rightarrow \infty}\varphi'(z_{n}) = {2\pi}\lim_{n \rightarrow \infty} \frac{n^{2}}{r^{2} ( 1+ i \tan \alpha )^{2}} \sin\left(\frac{2n\pi}{r( 1+ i \tan \alpha)}\right).
\end{eqnarray}
We observe that
\begin{eqnarray*}
\frac{2n\pi}{r( 1+ i \tan \alpha)} & = & \frac{2n\pi}{r( 1+ i \tan \alpha)} \frac{( 1 - i \tan \alpha)}{( 1 - i \tan \alpha)} \\
& = & \frac{2n\pi ( 1 - i \tan \alpha)}{r( 1 + \tan^{2} \alpha)} \\
& = & \frac{2n\pi}{r( 1 + \tan^{2} \alpha)} - i \frac{2n\pi \tan \alpha}{r( 1 + \tan^{2} \alpha)}.
\end{eqnarray*}
Furthermore,
\begin{eqnarray*}
i \frac{2n\pi}{r( 1+ i \tan \alpha)} & = & i\frac{2n\pi}{r( 1 + \tan^{2} \alpha)} +  \frac{2n\pi \tan \alpha}{r( 1 + \tan^{2} \alpha)}
\end{eqnarray*}
and,
\begin{eqnarray*}
- i \frac{2n\pi}{r( 1+ i \tan \alpha)} & = &  - i\frac{2n\pi}{r( 1 + \tan^{2} \alpha)} - \frac{2n\pi \tan \alpha}{r( 1 + \tan^{2} \alpha)}.
\end{eqnarray*}
Then,
\begin{eqnarray*}
\sin\left(\frac{2n\pi}{r( 1+ i \tan \alpha)}\right) & = & e^{i \left(\frac{2n\pi}{r( 1+ i \tan \alpha)}\right) } - e^{-i \left(\frac{2n\pi}{r( 1+ i \tan \alpha)}\right) }\\
& = &  e^{i\frac{2n\pi}{r( 1 + \tan^{2} \alpha)}} e^{\frac{2n\pi \tan \alpha}{r( 1 + \tan^{2} \alpha)}} - e^{- i\frac{2n\pi}{r( 1 + \tan^{2} \alpha)} } e^{- \frac{2n\pi \tan \alpha}{r( 1 + \tan^{2} \alpha)}}.
\end{eqnarray*}
From this last equation and equation \ref{eq1Ex} we have
\begin{eqnarray*}
\lim_{n \rightarrow \infty}\varphi'(z_{n}) &=& {2\pi}\lim_{n \rightarrow \infty} \frac{n^{2}}{r^{2} ( 1+ i \tan \alpha )^{2}} \sin\left(\frac{2n\pi}{r( 1+ i \tan \alpha)}\right) \\
& = & {2\pi}\lim_{n \rightarrow \infty} \frac{n^{2}}{r^{2} ( 1+ i \tan \alpha )^{2}} [e^{i\frac{2n\pi}{r( 1 + \tan^{2} \alpha)}} e^{\frac{2n\pi \tan \alpha}{r( 1 + \tan^{2} \alpha)}} - e^{- i\frac{2n\pi}{r( 1 + \tan^{2} \alpha)} } e^{- \frac{2n\pi \tan \alpha}{r( 1 + \tan^{2} \alpha)}}] \\
& = & + \infty.
\end{eqnarray*}
On the other hand, consider the sequence $w_{n} = \frac{r}{n}( 1+ i \tan( -\alpha))$. We know that $\tan( -\alpha) = - \tan \alpha$. Therefore,
\begin{eqnarray*}
\sin\left(\frac{2n\pi}{r( 1 - i \tan \alpha)}\right) & = &  e^{i\frac{2n\pi}{r( 1 + \tan^{2} \alpha)}} e^{\frac{- 2n\pi \tan \alpha}{r( 1 + \tan^{2} \alpha)}} - e^{- i\frac{2n\pi}{r( 1 + \tan^{2} \alpha)} } e^{+ \frac{2n\pi \tan \alpha}{r( 1 + \tan^{2} \alpha)}}
\end{eqnarray*}
and
\begin{eqnarray*}
\lim_{n \rightarrow \infty}\varphi'(w_{n}) & = & {2\pi}\lim_{n \rightarrow \infty} \frac{n^{2}}{r^{2} ( 1+ i \tan \alpha )^{2}} [e^{i\frac{2n\pi}{r( 1 + \tan^{2} \alpha)}} e^{\frac{- 2n\pi \tan \alpha}{r( 1 + \tan^{2} \alpha)}} - e^{- i\frac{2n\pi}{r( 1 + \tan^{2} \alpha)} } e^{+ \frac{2n\pi \tan \alpha}{r( 1 + \tan^{2} \alpha)}}] \\
& = & - \infty.
\end{eqnarray*}
This implies that $\lim_{z \rightarrow 0}\varphi'(z)$ does not exist. By Theorems \ref{coefasymp2} and \ref{BusAsymp}, $\varphi$ does not admit an asymptotic expansion.  
}
\end{Example}

\begin{Proposition} \label{INVfinite}
Let $\varphi: S \rightarrow \mathbb{C}$ be a non-constant holomorphic function that admits a nonzero asymptotic expansion $\hat{\varphi} (z)$ in a sector $S \subset \mathbb{C}$. Then 
$
Inv_{S}(\varphi) 
$
is a finite subgroup of $Diff(\mathbb{C},0)$.
\end{Proposition}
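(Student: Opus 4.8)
The plan is to transfer the analytic identity $\varphi\circ f=\varphi$ to the ring of formal power series by means of Lemma \ref{FormalInvar}, and then to exploit the formal normal form of $\hat\varphi$ provided by Lemma \ref{FormalChange}. The mechanism is this: a formal series of the shape $z^{\nu}$ is invariant under $f$ only if $f$ becomes, after a formal change of coordinates, the rotation $z\mapsto\zeta z$ for some $\nu$-th root of unity $\zeta$; hence the assignment $f\mapsto\zeta$ will embed $Inv_{S}(\varphi)$ into the finite group of $\nu$-th roots of unity, which forces finiteness.

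First I would fix $f\in Inv_{S}(\varphi)$ and invoke Lemma \ref{FormalInvar}, identifying $f$ with its (convergent) Taylor series, to get $\hat\varphi\circ f=\hat\varphi$ in $\mathbb{C}[[z]]$. Writing $\hat\varphi=a_{0}+\hat\mu$ with $a_{0}=\hat\varphi(0)$ and $\hat\mu(0)=0$, and using that composition with $f$ leaves constants unchanged, this becomes $\hat\mu\circ f=\hat\mu$. I would then treat the case $\hat\mu\neq0$, that is $\hat\varphi$ non-constant, which is the relevant one; so $\hat\mu(z)=\sum_{j\geq\nu}a_{j}z^{j}$ with $a_{\nu}\neq0$ and $\nu\geq1$. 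Applying Lemma \ref{FormalChange} to $\hat\mu$ yields $\nu\in\mathbb{N}$ and an invertible series $\hat\psi\in\mathbb{C}[[z]]$ with $\hat\mu\circ\hat\psi(z)=z^{\nu}$; putting $\hat\eta:=\hat\psi^{-1}$ (which exists, and has order one, by Theorem \ref{teolagran}) and composing the identity on the right with $\hat\eta$ gives $\hat\mu=\hat\eta^{\,\nu}$.

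The core step is then a short manipulation in $\mathbb{C}[[z]]$. Substituting $\hat\mu=\hat\eta^{\,\nu}$ into $\hat\mu\circ f=\hat\mu$ produces $(\hat\eta\circ f)^{\nu}=\hat\eta^{\,\nu}$. Since $\hat\eta$ and $f$ each have order one, the quotient $\hat g:=(\hat\eta\circ f)/\hat\eta$ is a well-defined unit of $\mathbb{C}[[z]]$ with $\hat g(0)=f'(0)\neq0$, and $\hat\eta\circ f=\hat g\,\hat\eta$. As $\mathbb{C}[[z]]$ is an integral domain and $\hat\eta^{\,\nu}\neq0$, cancelling it gives $\hat g^{\,\nu}=1$; in particular $f'(0)^{\nu}=1$, and writing $\hat g=f'(0)\,\hat k$ with $\hat k(0)=1$ forces $\hat k^{\,\nu}=1$, whence $\hat k=1$ by the uniqueness clause of Theorem \ref{RootsPowerSeries}. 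Therefore $\hat g=\zeta_{f}$ is a constant $\nu$-th root of unity and $\hat\eta\circ f=\zeta_{f}\,\hat\eta$.

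Finally I would check that $f\mapsto\zeta_{f}$ is an injective map from $Inv_{S}(\varphi)$ into the group of $\nu$-th roots of unity, so that $\#\,Inv_{S}(\varphi)\leq\nu<\infty$. Injectivity: if $\zeta_{f}=\zeta_{g}$ then $\hat\eta\circ f=\hat\eta\circ g$, and composing on the left with $\hat\eta^{-1}$ gives $f=g$ as formal, hence as analytic, germs. (The map is moreover a homomorphism, since $\zeta_{f\circ g}\hat\eta=\hat\eta\circ(f\circ g)=(\hat\eta\circ f)\circ g=(\zeta_{f}\hat\eta)\circ g=\zeta_{f}\zeta_{g}\hat\eta$.) I expect the only delicate point to be the formal bookkeeping of the third paragraph — namely that $\mathbb{C}[[z]]$ is a domain, so that the factor $\hat\eta^{\,\nu}$ may be cancelled, and that $1$ has a unique $\nu$-th root with constant term $1$ — while the genuine content sits in Lemma \ref{FormalInvar}, which is exactly what legitimizes the passage from the analytic to the formal setting; the degenerate case in which $\hat\varphi$ reduces to a constant must be noted separately.
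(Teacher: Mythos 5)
Your argument is correct and follows essentially the same route as the paper's: pass to the formal level via Lemma \ref{FormalInvar}, normalize $\hat{\varphi}$ to $z^{\nu}$ via Lemma \ref{FormalChange}, and observe that conjugation by the normalizing series turns each $f$ into a rotation by a $\nu$-th root of unity, so that $Inv_{S}(\varphi)$ embeds into a cyclic group of order $\nu$. Your explicit bookkeeping (splitting off the constant term, and proving via the uniqueness of formal roots that the invariants of $z^{\nu}$ are exactly the rotations) only spells out what the paper asserts when it identifies $Inv(z^{n_0})$ with the cyclic rotation group, and the degenerate case you flag, where $\hat{\varphi}$ is a nonzero constant, is likewise left untreated in the paper's own proof.
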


\begin{proof}
By Lemma \ref{FormalInvar} $f \in Inv_{S}(\varphi)$ implies that $f \in Inv_{S}(\hat{\varphi})$ where 
\[
Inv(\hat{\varphi}) = \left\lbrace g \in Diff(\mathbb{C},0) : \hat{\varphi}\circ g = \hat{\varphi} \textit{ (as power series)}\right\rbrace.
\]
Therefore, $\sharp Inv_{S}(\varphi) \leq \sharp Inv(\hat{\varphi})$. It is enough to show that $Inv(\hat{\varphi})$ is finite. By Lemma \ref{FormalChange} there are a number $n_{0} \in \mathbb{N}$ and a formal power series $\hat{\psi}$ such that $\hat{\varphi} \circ \hat{\psi}(z)= z^{n_{0}}$ for every $z$ in $S$.
\begin{Claim}
There is an isomorphism (bijection) 
\begin{eqnarray*}
\mathcal{G}: Inv(\hat{\varphi}) &\rightarrow & Inv(z^{n_0}) \\
 g &\mapsto & \hat{\psi}^{-1} \circ g  \circ \hat{\psi}
\end{eqnarray*}
\end{Claim}
In fact, suppose that $g \in Inv(\hat{\varphi})$ then $\hat{\varphi} \circ g = \hat{\varphi}$ implies that
\begin{eqnarray*}
\hat{\varphi} \circ \hat{\psi} \circ \hat{\psi}^{-1} \circ g & = & \hat{\varphi} \\
z^{n_0} \circ \hat{\psi}^{-1} \circ g & = & \hat{\varphi} \\
z^{n_0} \circ \hat{\psi}^{-1} \circ g \circ \hat{\psi}& = & \hat{\varphi} \circ \hat{\psi} \\
z^{n_0} \circ \hat{\psi}^{-1} \circ g \circ \hat{\psi}& = & z^{n_0}.
\end{eqnarray*}
It remains then to observe that $Inv(z^{n_0})$ is the finite cyclic group generated by the rotation $\theta(z) = e^{\frac{2\pi i}{n_{0}}}z$.
\end{proof}

\section{Foliations and reduction of singularities} \label{ReviewFoliations}
Now we will provide a brief review of some concepts on holomorphic foliations with singularities. For further references see \cite{Ca-Sad-LN generalizedCurve, Ca-Sa-LN algebraic limit, seidenberg}.

Let $\fa$ be a holomorphic foliation with isolated singularities on a complex surface $M$. Denote by
sing$(\fa)$ the singular set of $\fa$. Given a leaf $L_0$ of $\fa$ we choose any base point $p \in L_0 \in M \setminus sing(\fa)$ and a transverse disc $\Sigma_p \Subset M$ to $\fa$ centered at $p$. The \textit{holonomy group} of the leaf $L_0$ with respect to the disc $\Sigma_p$ and to the base point $p$ is the image of the representation $\textit{Hol}: \pi_{1}(L_0, p) \rightarrow Diff(\Sigma_p, p)$ obtained by lifting closed paths in $L_0$ with base point $p$, to paths in the leaves of $\fa$, starting at points $z \in \Sigma_p$, by means of a transverse fibration to $\fa$ containing the disc $\Sigma_p$ (see \cite{CamachoGeometTheory}, chapter IV). Given a point $z \in \Sigma_p$ we denote the leaf through $z$ by $L_z$. Given a closed path $\gamma \in \pi_{1}(L_{0}, p)$ we denote by $\tilde{\gamma}$ its lift to the leaf $L_z$ and starting from the point $z$. Then the image of the corresponding holonomy map is $h_{[\gamma]}(z) = \tilde{\gamma}_{z}(1)$, i.e., the final point of the lifted path $\tilde{\gamma}_{z}$. This defines a diffeomorphism germ map $h_{[\gamma]}: (\Sigma_{p}, p) \rightarrow (\Sigma_{p}, p)$ and also a group homomorphism $\textit{Hol}: \pi_{1}(L_{0}, p) \rightarrow Diff(\Sigma_{p}, p)$. The image $\textit{Hol}(\fa, L_{0}, \Sigma_p, p) \subset Diff(\Sigma_{p}, p)$ of such homomorphism is called the \textit{holonomy group} of the leaf $L_0$ with respect to $\Sigma_{p}$ and $p$. By considering any parametrization $z : (\Sigma_{p}, p) \rightarrow (\mathbb{D},0)$ we may identify (in a non-canonical way) the holonomy group with a subgroup of $Diff(C,0)$. It is clear from the construction that the maps in the holonomy group are constant in the traces of each leaf of the foliation in the given transverse section. Nevertheless, this property can be
shared by a larger group that may therefore contain more information about the foliation in a
neighborhood of the leaf. The \textit{virtual holonomy group} of the leaf with respect to the transverse
section $\Sigma_{p}$ and base point p is defined as (\cite{Ca-Sa-LN algebraic limit} Definition 2, page 432 or also \cite{algebraicLimit})
\[
\textit{Hol}^{virt}(\fa, \Sigma_{p}, p) = \{f \in Diff(\Sigma_{p}, p): \tilde{L}_z = \tilde{L}_{f(z)}, \forall z  \in (\Sigma_{p}, p)\}.
\]
The virtual holonomy group contains the holonomy group and consists of the map germs that are constant in the trace of each leaf of the foliation in a transverse section.

\textbf{Reduction of singularities} \label{subsectionReduction}
We denote by $\fa$ a representative of a germ of holomorphic foliation with a singularity at the origin $0\in\mathbb{C}^2$, defined in an open neighborhood $U$ of the origin such that $0$ is the only singularity of $\fa$ in $U$. Theorem of redution of singularities of Seindenberg \cite{seidenberg} asserts the existence of a proper holomorphic map $\sigma:\tilde{U}\to U$ which is a finite composition of quadratic blowing up's, starting with a blowing up at the origin, such that the pull-back foliation $\tilde{\fa}:=\sigma^*(\fa)$ of $\fa$ by $\sigma$ satisfies:
\begin{enumerate}
\item The exceptional divisor $E(\fa):=\sigma^{-1}(0)\subset \tilde{U}$  can be written as $E(\fa)=\bigcup_{j=1}^{m} D_j$ where each irreducible component $D_j$ is diffeomorphic to an embedded projective line $\mathbb{P}(1)$ introduced as a divisor of the successive blowing up's. 
\item $\sing(\fa)\subset E(\fa)$ is a finite set and any singularity $p\in\sing(\tilde{\fa})$ is {\it irreducible} i.e., belongs to one of the following categories:
\begin{itemize}
\item  $xdy - \lambda ydx + h.o.t. = 0$, $\lambda\notin\bq_{+}$ ({\it non-degenerate singularity}),
\item  $y^{p+1}dx-[x(1+\lambda y^p)+h.o.t.] dy = 0$, $p\geq 1$. This case is called a {\it saddle-node}.
\end{itemize}
\end{enumerate}

We call the lifted foliation $\tilde{\fa}$ the {\it desingularization} or {\it reduction of singularities} of $\fa$. The foliation is {\it non-dicritical} if $E(\fa)$ is invariant by $\tilde{\fa}$. This is equivalent to say that $\fa$ admits only a finite number of separatrices. Any two components $D_i$ and $D_j$, $i\neq j$, of the exceptional divisor, intersect (transversely) at at most one point, which is called a {\it corner}. There are no cycles and no triple points in the exceptional divisor.

The following is an important result relates analytic conjugation for germs of singularities and for their holonomy maps.

\begin{Theorem} [Martinet-Ramis \cite{Ramis1} pg. 595]
\label{Theorem:analyticconjugation0} Let $\fa_1$\,, $\fa_2$ be two
germs of non-degenerate singularities  $\fa_j\colon xdy-\la
y(1+b_j(x,y))dx=0$, with $b_j(x,y)$ holomorphic, $b_j(0,0)=0$, $\la
\in \mathbb{R}_{-}$. Denote by $f_j\colon \mathbb{C},0 \to \mathbb{C},0$ the holonomy map
of $\Ga\colon (y=0)$ with respect to $\fa_j$\,. Then $\fa_1$ and
$\fa_2$ are analytically conjugate by a holomorphic diffeomorphism
$\Phi\colon \mathbb{C}^2,0 \to \mathbb{C}^2,0$ if, and only if, the holonomy maps
$f_1$ and $f_2$ are analytically conjugate in $\Diff(\mathbb{C},0)$.
\end{Theorem}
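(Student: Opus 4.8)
The plan is to prove the two implications separately; the forward one is routine, and the content of the theorem lies in the converse, which requires a sectorial/gluing construction near the separatrix $\Gamma=\{y=0\}$ followed by an extension across the origin.

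\emph{The easy direction.} Suppose $\Phi\colon(\mathbb{C}^2,0)\to(\mathbb{C}^2,0)$ is a holomorphic diffeomorphism with $\Phi_{*}\mathcal{F}_1=\mathcal{F}_2$. Then $\Phi$ permutes the (two) separatrices; the eigenvalue quotients attached to $\{y=0\}$ and $\{x=0\}$ are $\lambda$ and $\lambda^{-1}$, which differ when $\lambda\neq-1$, so $\Phi(\{y=0\})=\{y=0\}$ (and when $\lambda=-1$ the statement is symmetric under the coordinate swap, so we may assume this in any case). Picking a transverse section $\Sigma=\{x=x_0\}$, parametrized by $y$, and sliding $\Phi(\Sigma)$ back to $\Sigma$ along the leaves of $\mathcal{F}_2$ produces a germ $h\in\mathrm{Diff}(\mathbb{C},0)$ with $h\circ f_1=f_2\circ h$.

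\emph{The converse: sectorial conjugation over the punctured separatrix.} Assume $h\circ f_1=f_2\circ h$. Restrict each $\mathcal{F}_j$ to a tube $W=D^{*}_{\epsilon}\times\mathbb{D}_{\delta}$ around $\Gamma\setminus\{0\}$, where $D^{*}_{\epsilon}=\{0<|x|<\epsilon\}$. On each fiber $\{x=c\}$, $c\neq0$, the equation reduces to $c\,dy=0$, so $\mathcal{F}_j$ is transverse to the vertical fibration there (leaves are local graphs $y=y(x)$). Cover $D^{*}_{\epsilon}$ by two sectors $S^{+},S^{-}$ meeting in two components $A_1,A_2$; over each simply connected sector the leaves of $\mathcal{F}_j$ are genuine graphs, so there is a fiber-preserving biholomorphism $\Psi^{\pm}_{j}(x,y)=(x,\psi^{\pm}_{j}(x,y))$ conjugating $\mathcal{F}_j|_{S^{\pm}\times\mathbb{D}}$ to the product foliation $dy=0$. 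Using the freedom to post-compose each $\Psi^{\pm}_{j}$ with a fibered map $(x,y)\mapsto(x,\eta_j(y))$, one normalizes so that $\Psi^{+}_{j}=\Psi^{-}_{j}$ on $A_1$ while on $A_2$ they differ by exactly the monodromy, $\Psi^{+}_{j}=(x,f_j)\circ\Psi^{-}_{j}$. Set $\Phi^{\pm}:=(\Psi^{\pm}_{2})^{-1}\circ(\mathrm{id}\times h)\circ\Psi^{\pm}_{1}$. On $A_1$ the two agree trivially; on $A_2$ a short computation gives $\Phi^{+}=(\Psi^{-}_{2})^{-1}\circ(x,\,f_2^{-1}\circ h\circ f_1)\circ\Psi^{-}_{1}$, which equals $\Phi^{-}$ precisely because $h\circ f_1=f_2\circ h$. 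Hence $\Phi^{+},\Phi^{-}$ glue (after shrinking $\delta,\epsilon$) to a fibered biholomorphism $\Phi(x,y)=(x,\phi(x,y))$ on $W$ with $\phi(x,0)=0$, $\phi|_{\{x=x_0\}}=h$, $\Phi(W)\subset D^{*}_{\epsilon}\times\mathbb{D}_{\delta'}$, conjugating $\mathcal{F}_1|_W$ to $\mathcal{F}_2|_W$.

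\emph{Extension across $\{x=0\}$ and conclusion.} For each fixed $y$, $x\mapsto\phi(x,y)$ is holomorphic and bounded (by $\delta'$) on $D^{*}_{\epsilon}$, so it extends holomorphically to $D_{\epsilon}$ by the Riemann removable singularity theorem; local uniform bounds together with Osgood's lemma give that the extension is jointly holomorphic on $D_{\epsilon}\times\mathbb{D}_{\delta}$. The extended $\Phi(x,y)=(x,\phi(x,y))$ fixes $0$, has Jacobian $\phi_y(0,0)=h'(0)\neq0$ there, hence is a germ of biholomorphism; since $\Phi_{*}\mathcal{F}_1=\mathcal{F}_2$ on the dense open set $\{x\neq0\}$ it holds identically. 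This $\Phi$ is the desired conjugation, completing the equivalence.

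\emph{Main obstacle.} The delicate step is the sectorial construction: producing the trivializations $\Psi^{\pm}_{j}$ on tubes of uniform fiber-radius over each sector requires controlling the $y$-drift of leaves over a sector, and this is exactly where $\lambda\in\mathbb{R}_{-}$ enters — the monodromy then has modulus-one linear part, so a ``half-turn'' nearly preserves $|y|$ and leaves stay inside $\mathbb{D}_{\delta}$. One must also check carefully that, after normalization, the gluing cocycle on $A_2$ is exactly $f_j$ (so that $h\circ f_1=f_2\circ h$ is the precise compatibility needed), and verify joint holomorphy and the biholomorphism-germ property of the extension; these last points are standard but should be done with care.
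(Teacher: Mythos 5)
First, note that the paper does not prove this statement at all: it is quoted as a known result of Martinet--Ramis (and, in this formulation, it is essentially Mattei--Moussu's conjugation theorem for Siegel-type singularities), so there is no in-paper proof to compare against; your proposal has to stand on its own against the classical argument.

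Your overall strategy (reduce to the punctured separatrix, build a fibered conjugacy $(x,y)\mapsto(x,\phi(x,y))$ from $h$ with $h\circ f_1=f_2\circ h$, then extend across $\{x=0\}$ by boundedness and Riemann's theorem) is the right one, but the key step as you state it fails. For $\lambda\in\mathbb{R}_-$ the leaves of the linear model are $y=cx^{\lambda}$, so along a leaf $|y|\sim|c|\,|x|^{-|\lambda|}\to\infty$ as $|x|\to 0$ inside a sector: leaves escape through the top of the tube $S^{\pm}\times\{|y|<\delta\}$ near $x=0$, and they are not graphs over the whole sector. Consequently there is no fiber-preserving biholomorphism of $S^{\pm}\times\{|y|<\delta\}$ (with uniform fiber radius) conjugating $\mathcal{F}_j$ to the product foliation $dy=0$; your justification in the last paragraph (``a half-turn nearly preserves $|y|$'') only controls the angular drift, while the obstruction is the radial drift, which is exactly where $\lambda<0$ makes $|y|$ blow up toward the singular fiber. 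The correct mechanism, which your write-up replaces by the nonexistent product trivializations, is the holonomy-transport construction: fix the transversal $\Sigma=\{x=x_0\}$, and for $(x,y)$ in the punctured tube lift a path from $x$ to $x_0$ into the $\mathcal{F}_1$-leaf, apply $h$ on $\Sigma$, and lift the reversed path into the $\mathcal{F}_2$-leaf back over $x$; the relation $h\circ f_1=f_2\circ h$ makes this independent of the path class, and the crucial estimate---outward radial transport contracts $|y|$ (by the factor $|x/x_0|^{|\lambda|}$ up to bounded corrections) while circular transport changes $|y|$ only by a bounded factor---is what yields the boundedness of $\phi$ needed for the removable-singularity extension. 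In your scheme that boundedness is asserted (``bounded by $\delta'$'') but is precisely the point that does not come for free once the trivializations are only available on leaf-adapted, horn-shaped domains rather than products. A secondary, minor gap: in the easy direction, when $\lambda=-1$ a conjugacy may swap the two separatrices, and ``by symmetry'' does not immediately give that $f_1$ and $f_2$ (holonomies of the two $\{y=0\}$ axes) are conjugate; this case needs an explicit word, though it is not the heart of the theorem.
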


As a corollary of Theorem \ref{Theorem:analyticconjugation0} above we have:

\begin{Theorem} [Martinet-Ramis \cite{Ramis1} pg. 595]\label{Theorem:analyticconjugation}
{\it A germ of an irreducible non-degenerate singularity $\fa\colon xdy-\la ydx +\cdots = c$
is analytically linearizable if, and only if, its holonomy map of a given separatrix is analytically linearizable.}
\end{Theorem}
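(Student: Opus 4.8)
The plan is to deduce Theorem~\ref{Theorem:analyticconjugation} from Theorem~\ref{Theorem:analyticconjugation0} by comparing $\fa$ with its linear model. First I would fix the given separatrix of $\fa$ and choose local coordinates $(x,y)$ in which it becomes $\{y=0\}$. Since $\fa$ is an irreducible non-degenerate singularity having $\{y=0\}$ as a separatrix with eigenvalue ratio $\la$, one may write the defining $1$-form as $x\,B(x,y)\,dy-\la y\,A(x,y)\,dx$ with $A(0,0)=B(0,0)=1$ and, after dividing by the unit $A$, bring $\fa$ to the normalized shape $x\,dy-\la y(1+b(x,y))\,dx=0$ with $b$ holomorphic and $b(0,0)=0$ — precisely the form required in the hypothesis of Theorem~\ref{Theorem:analyticconjugation0}. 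The linear model is then $\fa_{0}\colon x\,dy-\la y\,dx=0$, i.e. the case $b\equiv 0$, and saying that $\fa$ is analytically linearizable means that $\fa$ is analytically conjugate to $\fa_{0}$.

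Next I would record the one explicit (and routine) computation. For $\fa_{0}$ the leaves are $y=Cx^{\la}$, so transporting along the loop $x\mapsto e^{2\pi i t}x$, $t\in[0,1]$, inside $\{y=0\}$ produces the germ $h_{0}(z)=e^{2\pi i\la}z$, which is linear; the same loop applied to $\fa$ shows that the holonomy map $f$ of $\fa$ along $\{y=0\}$ has linear part $f'(0)=e^{2\pi i\la}$ as well, the term $b$ contributing only at higher order. Hence $h_{0}$ is the relevant linear model for the holonomy, and ``$f$ is analytically linearizable'' means exactly ``$f$ is analytically conjugate in $\Diff(\mathbb{C},0)$ to $h_{0}$''.

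Both implications now follow formally. For the forward direction, if $\fa$ is analytically linearizable it is analytically conjugate to $\fa_{0}$; an analytic conjugacy of foliations carries a separatrix to a separatrix and the corresponding holonomy to an analytically conjugate one, so the holonomy of $\fa$ along $\{y=0\}$ is analytically conjugate to $h_{0}$, hence analytically linearizable (one may also simply invoke the ``only if'' part of Theorem~\ref{Theorem:analyticconjugation0}). For the converse, if $f$ is analytically linearizable then $f$ is analytically conjugate to $h_{0}$, which is the holonomy map of $\fa_{0}$; applying Theorem~\ref{Theorem:analyticconjugation0} to $\fa_{1}=\fa$ (with $b_{1}=b$) and $\fa_{2}=\fa_{0}$ (with $b_{2}\equiv 0$), the fact that their holonomy maps are analytically conjugate gives that $\fa$ and $\fa_{0}$ are analytically conjugate, i.e. $\fa$ is analytically linearizable.

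The main obstacle is not the logical structure but matching hypotheses: one must check the normalization in the first step, and, more seriously, confront the restriction $\la\in\mathbb{R}_{-}$ under which Theorem~\ref{Theorem:analyticconjugation0} is stated, whereas an irreducible non-degenerate singularity only satisfies $\la\notin\bq_{+}$. I would address this by restricting the corollary to the range covered by the cited theorem and remarking that in the complementary ranges the equivalence is classical for independent reasons (Poincar\'e--Dulac normalization when $\la$ is in the Poincar\'e domain, Siegel-type arguments otherwise), applied to the holonomy. A minor secondary point is that the statement does not depend on which separatrix is chosen, since for a non-degenerate singularity the holonomies of the two separatrices are analytically conjugate up to inversion; this should be noted but needs no real work.
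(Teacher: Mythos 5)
Your proposal is correct and follows exactly the route the paper intends: the paper offers no written proof, stating the result as an immediate corollary of Theorem~\ref{Theorem:analyticconjugation0} (Martinet--Ramis), and your argument — normalize so the separatrix (in fact both separatrices) are coordinate axes, take $\fa_{2}$ to be the linear model $xdy-\la ydx=0$ whose holonomy is $z\mapsto e^{2\pi i\la}z$, and apply the conjugation theorem in both directions — is precisely that deduction, with the $\la\in\mathbb{R}_{-}$ versus $\la\notin\bq_{+}$ discrepancy correctly identified and disposed of by the classical Poincar\'e-domain case. (Only trivial slips: one divides by the unit multiplying $dy$, not $dx$, to reach the normal form, and putting the form in the shape $xB\,dy-\la yA\,dx$ already uses the existence of the second smooth separatrix.)
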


\section{Examples} \label{ExampleSection}
This section is dedicated to show the necessity of the hypotheses in our main results. The first example justifies the non-dichricity hypothesis of Theorem \ref{Main}. 
\begin{Example} {\rm
Consider $\fa$ a holomorphic foliation with an isolated singularity at $0 \in \mathbb{C}^2$ and the pull-back foliation $\widetilde{\fa} := \pi^*(\fa)$, where
$\pi = \pi_{2} \circ \pi_{1} \colon M_{2} \to  \mathbb{C}^2$ is the blow-up resolution of $\fa$. Suppose that 
$\widetilde{\fa}$ exhibits a dicritical component $\bp_{2}$, where the exceptional divisor $D = \pi^{-1}(0)$ is given by $D = \bigcup\limits_{j=1}^{2} \bp_j$. Then, the component $\bp_{2}$ is transverse to $\widetilde{\fa}$ (without tangent points).
(See figure \ref{figura:DicriticExample} below).
\begin{figure}[h]
\centering 
\includegraphics[width=10cm]{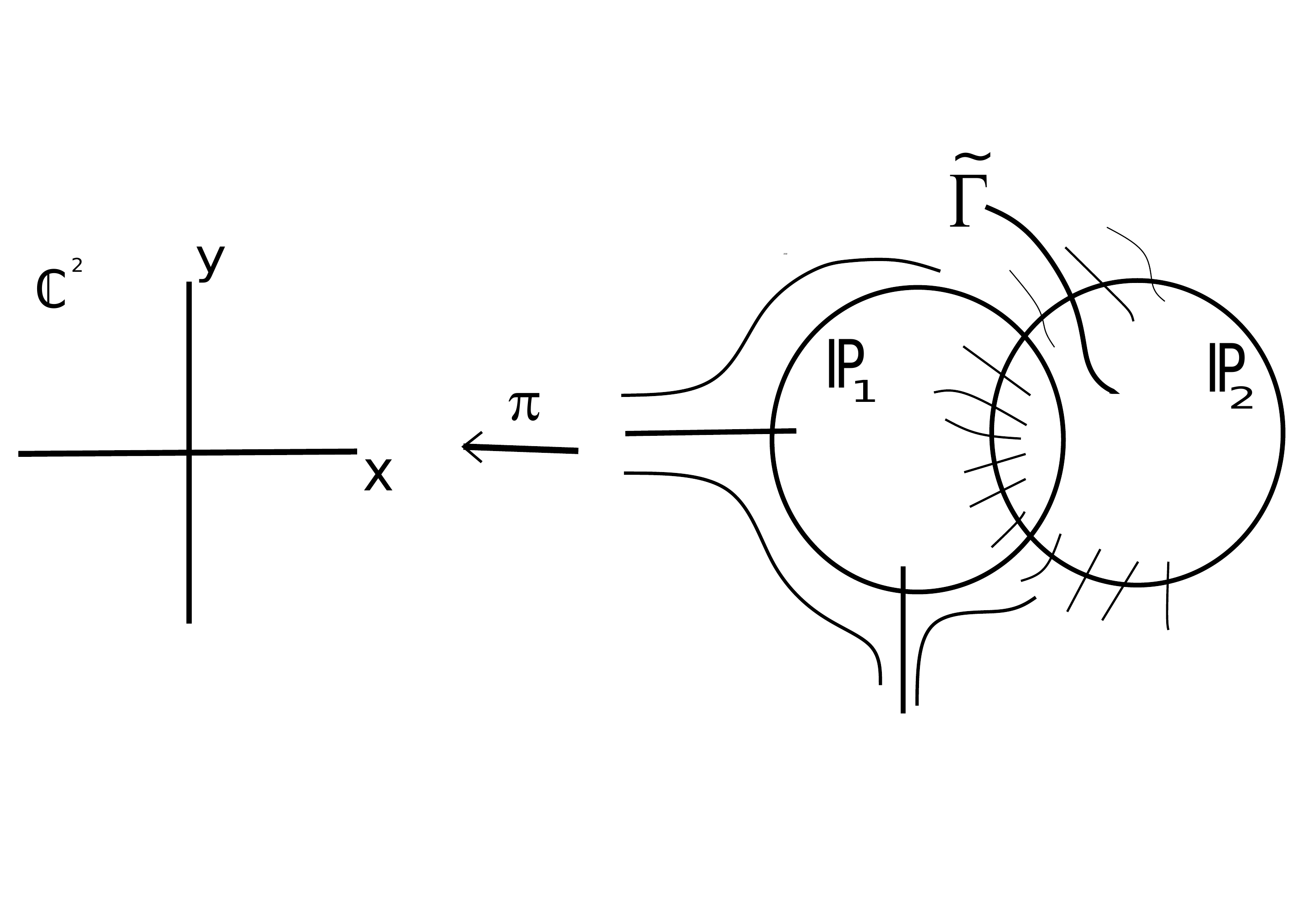} 
\caption{$\mathbb{P}_2$ is a dicritical component of $D$.}
\label{figura:DicriticExample}
\end{figure}
Consider a separatrix $\widetilde{\Gamma}$ transverse to $\mathbb{P}_2$ and $\widetilde{\Sigma}\subset \mathbb{P}_2$ a transverse section such that $\widetilde{\Sigma}$ $ \overline{\pitchfork} $ $\widetilde{\Gamma}$. Since $\mathbb{P}_2$ is a dicritic component there is a neighborhood $\widetilde{U}$ of $\widetilde{\Sigma}$ such that  for $\widetilde{\fa}|_{\widetilde{U}}$ admits a holomorphic first integral. Denote $U = \pi(\widetilde{U})$, $\Gamma = \pi(\widetilde{\Gamma})$ and $\Sigma = \pi(\widetilde{\Sigma})$. Then there is a holomorphic first integral for $\fa$ defined in the neighborhood $U$ of $\Sigma$ in $\mathbb{C}^2$. The restriction of this holomorphic first integral to $\Sigma$ implies that the pair $(\fa,\Gamma)$ admits a moderate sectorial first integral. On the other hand, we maiy obtain such a foliation $\fa$ for which the holonomy group of the component $\mathbb{P}_1$ is not finite (see \cite{LN1}). Therefore, $\fa$ does not admit a holomorphic first integral defined in a neighborhood of $0 \in \mathbb{C}^2$. The point is that $\fa$ is dicritic. 
}
\end{Example}

In the next example we show the necessity of the hypothesis about non-zero asymptotic expansion in main results. This example will also motivate the notion of \textit{non-weak separatrix} that we will do in the section §\ref{null asymptotic saddle-node}.

\begin{Example} {\rm \label{ModerateNecessity}
Consider the saddle-node $\fa$ given by
$$ \omega_{2,1} (x,y)=
\begin{cases}
\dot{x} = x \\
\dot{y} = y^{2}. \\
\end{cases}
$$  
Observe that 
\[
xdy - y^{2}dx =0 \Rightarrow \frac{dy}{y^{2}} -\frac{dx}{x} =0 \Rightarrow d(-\frac{1}{y} - lnx) = 0. 
\]
Consider the strong separatrix $\Gamma_{s} : \{y=0\}$, the weak separatrix $\Gamma_{w} : \{x = 0\}$ and the transverse sections $\Sigma_{s} : (x = 1)$ and $\Sigma_{w} : (y = 1)$.  Then the foliation admits the first integral $f(x,y) = xe^{\frac{1}{y}}$ which is holomorphic outside of the $\Gamma_{s}$ .

\begin{figure}[h]
\centering 
\includegraphics[width=8cm]{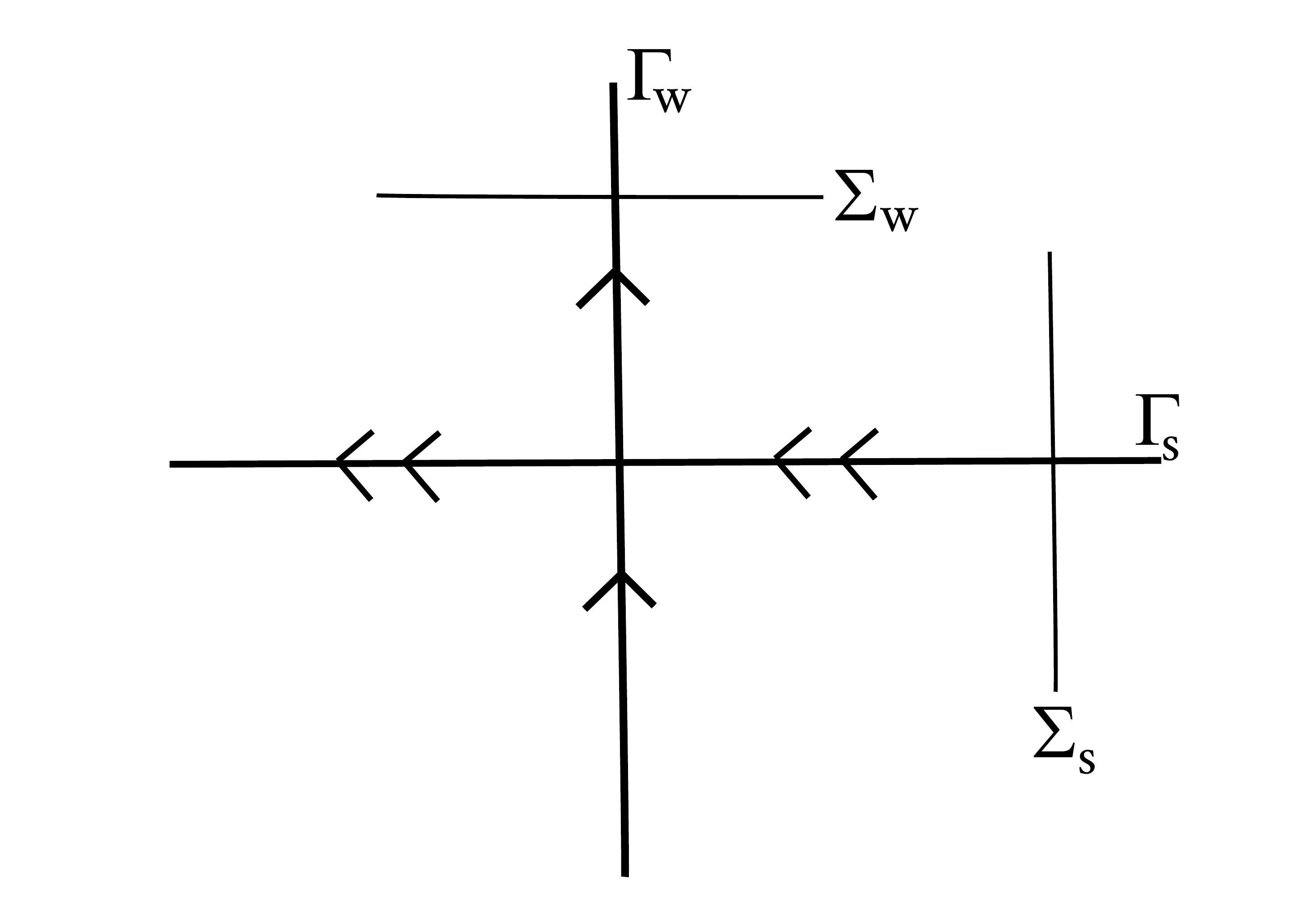} 
\caption{Saddle-node example: Transversal sections and separatrices.}
\label{figura:transversews}
\end{figure}
Denote by $\Sigma_{w}^{*} := \Sigma_{w} \setminus \{(0,1) \in \mathbb{C}^2\}$ and $\Sigma_{s}^{*} := \Sigma_{s} \setminus \{(1,0) \in \mathbb{C}^2\}$.  The restriction $\varphi_{w} = f|_{S_w} = xe$ is a sectorial first integral and admits nonzero asymptotic expansion at any sector $S_{w} \subset \Sigma_{w}^{*}$. Consider a sector $S_{s} \subset \{y \in \Sigma_{s}^{*} : Re(y) < 0\}$. Since $\varphi_{s}(y) = e^{\frac{1}{y}}$ we have
\begin{eqnarray*}
\mid \varphi_{s}(y) - 0 \mid &=& \mid e^{\frac{1}{y}} \mid = e^{Re\left(\frac{1}{y}\right)} = e^{Re\left(\frac{\bar{y}}{\mid y\mid^{2}}\right)}\\
&=& e^{\frac{1}{\mid y\mid^{2}}Re\left(\bar{y}\right)} = e^{\frac{1}{\mid y\mid^{2}}Re\left({y}\right)}.
\end{eqnarray*}
Since $e^{\frac{1}{\mid y\mid^{2}}Re\left({y}\right)} \in \mathbb{R}$ and $Re(y)<0$ for all $k \in \mathbb{N}$ there is a constant $A_k >0$ such that  
\begin{eqnarray*} \label{asympf1}
e^{\frac{1}{\mid y\mid^{2}}Re\left({y}\right)} &\leq & A_{k} \mid y \mid^k
\end{eqnarray*}
$\forall y$ in a proper subsector $S'\subset S_{s}$. Therefore, $\varphi_{s}$ admits $0 \in \mathbb{C}$ as asymptotic expansion in $S_{s}$. Nevertheless, $\fa$ does not admit a holomorphic first integral. The point is that:
\\
\item[i)]  $\varphi_{w}$ admits a nonzero asymptotic expansion, but $\Gamma_{w}$ is a a weak separatrix;
\\
\item[ii)] $\varphi_{s}$ admits zero as asymptotic expansion.
}
\end{Example}

\section{Dulac correspondence and transport by holonomy} \label{DulacSection}
In this section we will describe an important tool used to prove Theorem \ref{Main}. This is the Dulac correspondence (see  \cite{algebraicLimit, Cerveau-Scardua99, OrbBounded, integrable}).

Motivated by Seindenberg reduction theorem (see section \ref{ReviewFoliations}), we consider $\widetilde{\mathcal{F}}$ a foliation on a compact complex surface $\tilde{M}$ and $D \subset \tilde{M}$ a compact (codimension one) invariant divisor with normal crossing, no cycles and no triple points. We write $D = \cup_{j=1}^{m}D_{j}$, where each $D_{j}$ is an irreducible smooth component, and fix local transverse sections $\Sigma_{j}$ such that $\Sigma_{j} \cap D_{j} = p_{j} \notin sing(\widetilde{\mathcal{F}})$, and $(\Sigma_{j}, p_{j}) \cong (\mathbb{C},0)$. 
Denote by $G_j$ the holonomy group $Hol(\tilde{\mathcal{F}},D_j,\Sigma_j)$ of $D_j$ (we refer to section \ref{ReviewFoliations} for definition of holonomy group or \cite{mattei-moussu} for more properties). Denote by $L_z$ the leaf of $\tilde{\mathcal{F}}$ that contains the point $z \in \tilde{M}$ . 
The \textit{virtual holonomy group} $\widehat{G}_j$ of $\tilde{M}$ relative to the component $D_j$ at the section $\Sigma_j$ is defined to be (\cite{Ca-Sa-LN algebraic limit})
\[
\widehat{G}_j = \hat{Hol}(\tilde{\mathcal{F}}, D_{j}, \Sigma_{j}) = \{f \in Diff(\Sigma_j,p_j)\ \tilde{L}_{z} = \tilde{L}_{f(z)} \textit{, for any } z \in (\Sigma_{j}, p_{j})\}.
\]
Clearly, this virtual holonomy group $\widehat{G}_j$ contains the holonomy group $G_j$. Now we will fix a corner $q = D_i \cap D_j$  and make the following assumption: \\
\\
\\
ASSUMPTION . The corner $q$ is an irreducible singularity with a holomorphic first integral.
\\
\\
\begin{figure}[h]
\centering 
\includegraphics[width=10cm]{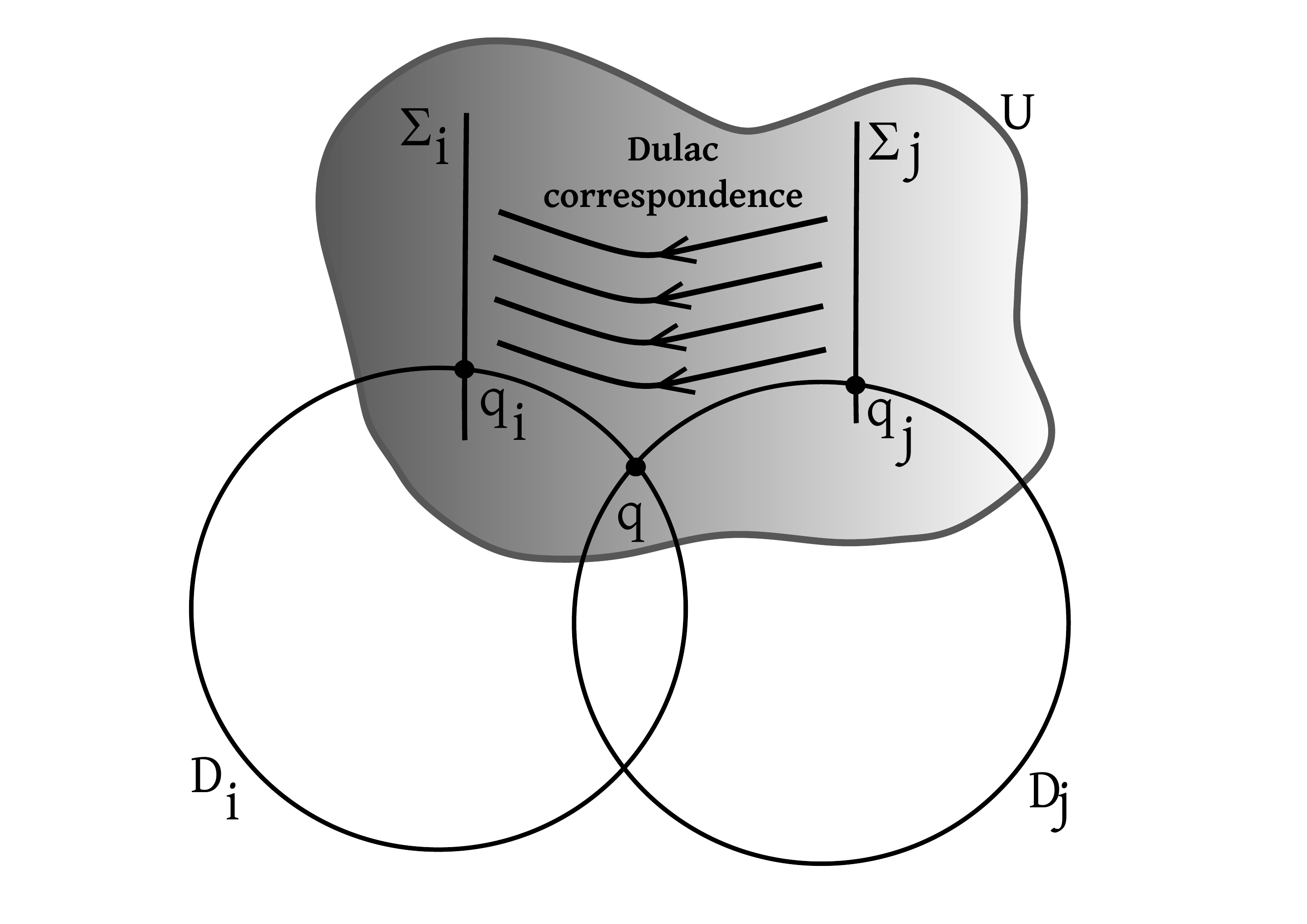} 
\caption{Dulac correspondence.}
\label{figura:DulacIntro}
\end{figure}

Since $q$ is irreducible with a holomorphic first integral, by Mattei-Moussu theorem (Theorem \ref{Theorem:analyticconjugation}) there are local holomorphic coordinates $(x, y) \in U$ such that $D_i \cap U = \{x = 0\}, D_j \cap U = \{y = 0\}$, and such that $\tilde{\mathcal{F}}|_{U}$ is given in the \textit{normal form} as $nxdy + mydx = 0$ and $q : x = y = 0$, where $n/m \in \mathbb{Q}_{+}$ and $\langle n, m\rangle = 1$. We fix the local transverse sections
as $\Sigma_{j} = \{x = 1\}$ and $\Sigma_{i} = \{y = 1\}$, such that $\Sigma_i \cap D_i = q_i \neq q$ and $\Sigma_j \cap D_j = q_j \neq q$. Denote by $h_{0} \in \textit{Hol}(\tilde{\mathcal{F}},D_i, \Sigma_i)$ the element corresponding to the corner $q$. Then we have $h_{0}(x) = exp(-2\sqrt{-1}\pi n/m)x$. The local leaves are given by $x^{m}y^{n} =cte$. The \textit{Dulac correspondence} is therefore given by (all branches are considered)
\[ \label{EqDulac}
\mathcal{D}_q : \mathcal{F}(\Sigma_{i}) \rightarrow \mathcal{F}(\Sigma_{j}),\textit{  }\mathcal{D}_q(x) = \{x^{m/n}\}.
\]
For the purpose of this paper we may assume that $G_{i}$ is abelian (for the another cases see  \cite{algebraicLimit, Cerveau-Scardua99, OrbBounded, integrable}). Take any element $h \in G_i$. Since $G_i$ is abelian, we have $h(x) = \mu x \tilde{h}(x^{m})$ for some $\tilde{h} \in \mathcal{O}_{1}, \tilde{h}(0) = 1$. We take $\mu_{1} = \mu^{m/n}$ and $h_{1} = \tilde{h}^{m/n}$ to be one of the $n$-roots of $\mu^{m}$ and $\tilde{h}^{m}$, respectively. Then we define $h^{\mathcal{D}_{q}} : (\Sigma_j , q_j) \rightarrow (\Sigma_j, q_j)$ by $h^{\mathcal{D}_q}(y) = \mu_{1}y\tilde{h}_1(y^{n})$. We consider the collection $\{h^{\mathcal{D}}\}$ of all these elements.

We will use the Dulac correspondence inside the proof of Theorem \ref{Main}, to transport the moderate sectorial first integral between two different projective lines with a common corner. More precisely, we have the following lemma.

\begin{Lemma} \label{LemmaDulac}
Let $\widetilde{\mathcal{F}}$ be a foliation on a compact complex surface $\tilde{M}$, and  $D \subset \tilde{M}$ a compact (codimension one) invariant divisor with normal crossing and no triple points. We write $D = \cup_{j=1}^{m}D_{j}$, where each $D_{j}$ is an irreducible smooth component. Let $D_1, D_2$ be two components such that the corner $q \in D_1 \cap D_2$ admits a neighborhood $U$ of $q$, where $\widetilde{\mathcal{F}}$ can be written in the normal form $nxdy + mydx = 0$. Suppose that there is a separatrix $\Gamma_1 \subset D_{1}$ such that the pair $(\widetilde{\mathcal{F}}, \Gamma_1)$ admits a moderate sectorial first integral $\varphi_1 :S_1 \rightarrow \mathbb{C}$, where $S_1$ is a sector contained in a transverse section $\Sigma_1$ and vertex at a point $q_1 \in \Sigma_1$.  Then for every point $q_{2} \in D_2 \cap U \setminus \{q\}$, there are a  transverse section $\Sigma_{2} \ni q_{2}$, a sector $S_2 \subset \Sigma_{2}$ with vertex at $q_{2}$, and a holomorphic function  $\varphi_{2} : S_2 \rightarrow \mathbb{C}$, such that $\varphi_{2}$ is constant in the traces of each leaf of $\fa$ in ${S_2}$ and which admits a nonzero asymptotic expansion. Moreover if $\D_{q} :\fa(\Sigma_1) \rightarrow \fa(\Sigma_2)$ denotes the Dulac correspondence then $\varphi_1 \circ \D_q = \varphi_2 $.
\end{Lemma}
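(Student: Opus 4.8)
The plan is to follow the explicit coordinates set up before the statement. In the normal-form chart $U$ with $\widetilde{\mathcal F}|_U : nx\,dy+my\,dx=0$, $\langle n,m\rangle=1$, the local leaves are the level sets $\{x^m y^n = \text{cte}\}$, and we have chosen $\Sigma_1=\{x=1\}$ meeting $D_1=\{y=0\}$... actually we must be careful which component is which: say $D_1\cap U=\{y=0\}$ so $\Sigma_1=\{x=1\}$ is coordinatized by $y$, and $D_2\cap U=\{x=0\}$ so the section through a point $q_2\in D_2\cap U\setminus\{q\}$ can be taken as $\Sigma_2=\{y=1\}$, coordinatized by $x$. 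First I would record the Dulac correspondence map on the level of first integrals: a point $y\in\Sigma_1$ lies on the leaf $\{x^m y^n = y^n\}$ (setting $x=1$), and this leaf meets $\Sigma_2=\{y=1\}$ at the points $x$ with $x^m = y^n$, i.e. $x=\{y^{n/m}\}$ — equivalently $y = \{x^{m/n}\}$, which is precisely $\D_q$. So the relation $\varphi_1\circ\D_q=\varphi_2$ is not something to prove separately: it is the \emph{definition} of $\varphi_2$. The real content is that this definition produces a genuine holomorphic function on an honest sector, invariant under the leaves, and admitting a nonzero asymptotic expansion.

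The key steps, in order. (1) Shrink everything: choose $\Sigma_1$, the sector $S_1\subset\Sigma_1$ with vertex $q_1=(x{=}1,y{=}0)$ on which $\varphi_1$ is defined and admits the nonzero asymptotic series $\hat\varphi_1(y)=\sum a_r y^r$, and shrink $U$ so that every leaf through $S_1$ (near $q$) continues inside $U$ and hits $\{x=0\}$ transversally near $q$; then near the chosen $q_2=(x{=}0,y{=}1)$ take $\Sigma_2=\{y=1\}$. (2) Define $\varphi_2$ on a sector $S_2\subset\Sigma_2$ with vertex at $q_2$ (i.e. at $x=0$): the map $x\mapsto x^{m/n}$, for a fixed choice of branch on a small sector $S_2=\{0<|x|<\rho,\ |\arg x|<\theta\}$, is a well-defined holomorphic injection onto a sector $S_1'$, and we set $\varphi_2(x):=\varphi_1(x^{m/n})$ provided $S_1'\subset S_1$ (arrange this by taking $\rho,\theta$ small; widening the angle of $S_1$ by the factor $m/n$ may force shrinking $S_2$'s angle, which is harmless). (3) Check leaf-invariance: two points $x,x'\in S_2$ lie on the same leaf of $\widetilde{\mathcal F}$ iff $x^m=(x')^m$, i.e. iff $x'=\zeta x$ with $\zeta^m=1$; for such nearby points the branch $x^{m/n}$ and $(x')^{m/n}$ correspond to the same leaf $\{x^m y^n=\text{cte}\}$ in $\Sigma_1$ (they are the Dulac images of the same local leaf), so $\varphi_1$ being a first integral on $S_1$ forces $\varphi_2(x)=\varphi_2(x')$; more to the point, $\varphi_2$ is constant along the traces of leaves in $S_2$ because $\varphi_1$ is, and $\D_q$ sends leaves to leaves. (4) Asymptotic expansion: by Theorem~\ref{BusAsymp}(VIII) the convergent power series $x\mapsto x^{m/n}$ (it is even holomorphic at $0$, or rather a Puiseux monomial) has asymptotic expansion equal to itself on $S_2$, namely $\hat g(x)=x^{m/n}$ with $\hat g(0)=0$; then by Theorem~\ref{BusAsymp}(VII) the composition $\varphi_1\circ g$ admits $\hat\varphi_1\circ\hat g=\sum_r a_r x^{rm/n}$ as asymptotic expansion on $S_2$. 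Since $\hat\varphi_1\neq 0$, some $a_{r_0}\neq 0$, so this series is nonzero, giving the \emph{moderate} conclusion. (5) Finally restate the identity $\varphi_2=\varphi_1\circ\D_q$ as already built in, and note that running $\D_q$ over all its branches just produces the finitely many choices of branch of $x^{m/n}$, each handled identically.

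The main obstacle — and the place where care is needed rather than routine computation — is step (2)–(3): matching the \emph{opening angles} and \emph{radii} so that $x^{m/n}(S_2)\subset S_1$ \emph{and} simultaneously $S_2$ is large enough to contain the traces of the relevant leaves near $q_2$, while staying inside the normal-form neighborhood $U$. One must verify that the transverse section $\Sigma_2$ through $q_2$ is genuinely transverse to $\widetilde{\mathcal F}$ (true since $q_2\notin\sing(\widetilde{\mathcal F})$, as $q_2\neq q$ and we stay in $U$ where the only singularity is the corner) and that the holonomy/Dulac transport is well-defined there, i.e. that leaves issuing from $S_1$ actually reach $\{x=0\}$ near $q_2$ — here we invoke that $\{x=0\}=D_2$ is invariant and the leaves $\{x^m y^n=c\}$ with small $c$ accumulate on $D_2$, so choosing $S_1$ (hence $c$) small enough and $S_2$ appropriately placed does the job. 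Everything else is a direct application of parts (VII) and (VIII) of Theorem~\ref{BusAsymp} together with the fact, from Lemma~\ref{FormalInvar} used in spirit, that asymptotic expansions are compatible with leaf-constancy.
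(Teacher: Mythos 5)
Your construction follows the same route as the paper: define $\varphi_2$ on a sector of $\Sigma_2$ as $\varphi_1$ composed with a monomial realization of the Dulac correspondence, check leaf-constancy, and obtain the asymptotic expansion by composing asymptotic series via Theorem \ref{BusAsymp}. The point where your argument as written does not reach the stated conclusion is step (4). Composing with the fixed branch of $x\mapsto x^{m/n}$ produces the series $\sum_r a_r x^{rm/n}$, which for $\gcd(m,n)=1$ and $n>1$ is a Puiseux series with genuinely fractional exponents, not an element of $\mathbb{C}[[x]]$. The notion of ``moderate'' used throughout the paper (and needed downstream, e.g.\ in Lemma \ref{FormalChange} and Proposition \ref{INVfinite}) requires an asymptotic expansion by a formal \emph{power} series, and Theorem \ref{BusAsymp}(VII) is stated for compositions of functions admitting power-series expansions, so it does not apply to the Puiseux monomial $x^{m/n}$ (also, the self-expansion of a convergent/holomorphic function is item (II), not (VIII), which concerns derivatives). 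So with your choice of transporting map you have not produced a nonzero asymptotic expansion in the sense required by the lemma; you half-notice the issue (``or rather a Puiseux monomial'') but do not resolve it.

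The gap is fixable, and the fix is exactly what distinguishes the paper's proof from yours: the paper transports $\varphi_1$ through the corner by composing with a single-valued \emph{integer-power} monomial $\xi$ (so that $\hat\varphi_1\circ\hat\xi$ is an honest nonzero element of $\mathbb{C}[[\,\cdot\,]]$ and Theorem \ref{BusAsymp} applies verbatim), rather than with the multivalued fractional power. Within your setup you could instead argue that $\hat\varphi_1$ only contains powers $y^{nk}$: the corner holonomy on $\Sigma_1$ is multiplication by a primitive $n$-th root of unity, and if the invariant sector $S_1$ and Lemma \ref{FormalInvar} give $\hat\varphi_1\circ h_0=\hat\varphi_1$, then $a_r=0$ unless $n\mid r$, so your composite becomes $\sum_k a_{nk}x^{mk}\in\mathbb{C}[[x]]$, nonzero, and the moderation claim follows; but this step (including the delicate question of invariance of a proper sector under a nontrivial rotation) must be supplied explicitly, since as written the exponent mismatch is a genuine obstruction, not a routine verification. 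Your treatment of the sector geometry, transversality at $q_2$, and leaf-constancy in steps (2)--(3) is consistent with the paper's argument (the paper phrases leaf-invariance through the adjunction $h^{\mathcal{D}}\circ\mathcal{D}_q=\mathcal{D}_q\circ h$, you check it directly; either is fine).
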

\begin{figure}[h]
\centering 
\includegraphics[width=10cm]{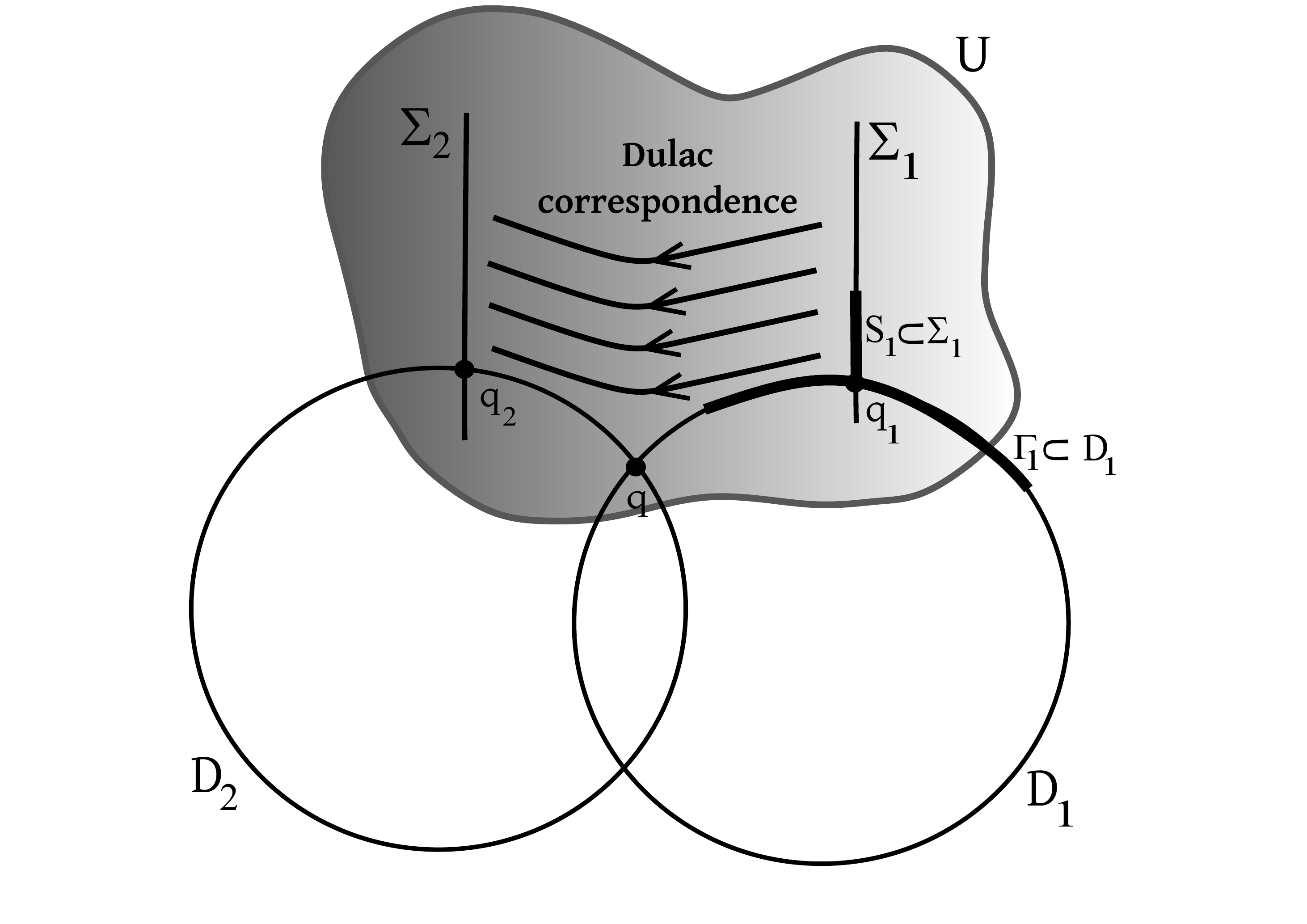} 
\caption{ }
\label{figura:PassingByProjective}
\end{figure}

\begin{proof}
Fix a point $q_{2} \in D_2  \cap U  \setminus \{q\}$. Let $\Sigma_{2} \subset U$ be a transverse section in $q_{2}$. Denote $\mathcal{F}(\Sigma_{1})$ the collection of subsets $E \subset \Sigma_{1}$ such that $E$ is contained in some leaf of $\widetilde{\mathcal{F}}|_{U}$. Define $\mathcal{F}(\Sigma_2)$ in a similar way. Denote by $\mathcal{D}_{q} :\mathcal{F}(\Sigma_{1}) \rightarrow \mathcal{F}(\Sigma_2)$ the Dulac correspondence given by $\mathcal{D}_q(x) = \{x^{m/n}\}$.  Let be the subcollection $\mathcal{A} = \{E \in \mathcal{F}(\Sigma_{1}) : E \cap S_{1} \neq \emptyset \}$. Take a sector $S_2 \subset \Sigma_{2}$ with vertex at $q_2$, such that $\mathcal{D}_q(E) \cap S_2 \neq \emptyset, \forall E \in \mathcal{A}$.

Define $\xi : S_2 \rightarrow S_{1}$ given by $\xi(y) = y^{n}$, where $n$ is given by the normal form. By the construction of $S_2$ and the Dulac correspondence, $\xi$ is a well define holomorphic function.  Define $\varphi_2:S_2 \rightarrow \mathbb{C}$ given by $\varphi_2 = \varphi_1 \circ \xi$. Since $\varphi_1$ is  a moderate sectorial first integral, we have that $\varphi_{1}(E) = cte$, for every $E_1 \in \mathcal{A}$. Then $\varphi_2 (\mathcal{D}_q(E)) = \varphi_1 \circ \xi (\mathcal{D}_q(E)) = \varphi_{1}(E) = cte$. Furthermore, for every element of holonomy $h \in \hat{Hol}(\widetilde{\mathcal{F}}, D_{1}, \Sigma_{1}) = \{f \in Diff(\Sigma_1,q_1)\ \tilde{L}_{z} = \tilde{L}_{f(z)} \textit{, for any } z \in (\Sigma_{1}, q_{1})\}$, we have a collection $\{h^{\mathcal{D}}\} \subset Diff(\Sigma_2,q_2)$ satisfying the adjunction equation $h^{\mathcal{D}} \circ \mathcal{D}_q = \mathcal{D}_q \circ h$. In particular, for every $y \in S_2$ the adjunction equation implies that $\xi \circ h^{\mathcal{D}}(y) = h \circ \xi (y)$. Therefore, for every $y \in S_2$, we have
\begin{eqnarray*}
\varphi_2 \circ h^{\mathcal{D}} (y) & = & \varphi_{1} \circ \xi \circ h^{\mathcal{D}}(y)  \\
                                  & = & \varphi_{1}  \circ h \circ \xi (y)               \\
                                  & = & (\varphi_{1}  \circ h) \circ \xi (y)             \\
                                  & = & \varphi_{1} \circ \xi (y)                        \\
                                  & = & \varphi_2(y).                              
\end{eqnarray*} 
It remains to prove the asymptotic property of $\varphi_2$. It is known that $\xi$ admits $\hat{\xi} = y^{n} = \xi$ as asymptotic expansion. Since $\varphi_{1}$ is a moderate sectorial first integral there is a nonzero formal power series $\hat{\varphi_{1}}$ such that $\varphi_{1}$ admits $\hat{\varphi_{1}}$ as asymptotic expansion in $S_{1}$. By Theorem \ref{BusAsymp}, $\varphi_2 = \varphi_{1} \circ \xi$ admits $\hat{\varphi_{1}} \circ \hat{\xi}$ as nonzero asymptotic expansion in $S_2$.
\end{proof}

Another important tool is the transport by holonomy. This tool will be required to carry the moderate sectoral first integral throughout (all over) the projective lines of the reduction of singularities.

\begin{Lemma} \label{LemmaTransHol}
Let $\widetilde{\mathcal{F}}$ be a foliation on a compact complex surface $\tilde{M}$, and  $D \subset \tilde{M}$ a compact (codimension one) invariant divisor with normal crossing and no triple points. We write $D = \cup_{j=1}^{m}D_{j}$, where each $D_{j}$ is an irreducible smooth component. Let be $q_1, q_2 \in D_j \setminus sing(\widetilde\fa)$, for some $j \in \{1, \dots, m\}$. Suppose that there are a transverse section $\Sigma_{1} \ni q_{1}$, a sector $S_1 \subset \Sigma_{1}$ with vertex at $q_{1}$, and a holomorphic function  $\varphi_1 : S_1 \rightarrow \mathbb{C}$, such that $\varphi_1$ is constant in the traces of each leaf of $\mathcal{F}$ in $S_1$ and which admits a nonzero asymptotic expansion in $S_1$. Then there are a  transverse section $\Sigma_{2} \ni q_2$, a sector $S_2 \subset \Sigma_{2}$ with vertex at $q_{2}$, and a holomorphic function  $\varphi_2 : S_2 \rightarrow \mathbb{C}$, such that $\varphi_2$ is constant in the traces of each leaf of $\mathcal{F}$ in ${S_2}$ and which admits a nonzero asymptotic expansion in $S_2$.
\end{Lemma}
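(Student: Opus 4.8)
The plan is to \emph{slide} the given sectorial first integral along the component $D_j$, using that $D_j$ is an embedded $\mathbb{P}^1$ and hence stays connected after removing the finitely many singular points lying on it, together with the local product (flow-box) structure of $\widetilde{\fa}$ along $D_j\setminus\sing(\widetilde{\fa})$.

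\textbf{Step 1: a leaf-preserving transport biholomorphism.} Since $D_j$ is biholomorphic to $\mathbb{P}^1$ and $\sing(\widetilde{\fa})\cap D_j$ is finite, $D_j\setminus\sing(\widetilde{\fa})$ is path-connected; fix a path $\gamma$ in it from $q_1$ to $q_2$. Every point of $\gamma$ is a regular point of $\widetilde{\fa}$ lying on the invariant leaf $D_j$, so we may cover $\gamma$ by finitely many flow boxes $V_1,\dots,V_N$ in which $\widetilde{\fa}=\{dz=0\}$, $D_j\cap V_\ell=\{z=0\}$, and $\{w=\text{const}\}$ is transverse to $\widetilde{\fa}$. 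On each nonempty overlap $V_\ell\cap V_{\ell+1}$ the natural identification of such transverse sections is a biholomorphism sending leaf traces to leaf traces; composing these along $\gamma$ and passing to germs, we obtain a germ of biholomorphism
\[
g\colon(\Sigma_2,q_2)\longrightarrow(\Sigma_1,q_1)
\]
with $\Sigma_2$ a section transverse to $\widetilde{\fa}$ at $q_2$, such that $g$ maps the trace in $\Sigma_2$ of any leaf of $\widetilde{\fa}$ into the trace of a leaf in $\Sigma_1$.

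\textbf{Step 2: choosing $S_2$ and defining $\varphi_2$.} In coordinates centered at $q_2$ and $q_1$ we have $g(w)=bw+O(w^2)$ with $b\in\mathbb{C}^{*}$, so $g$ carries a short, narrow sector with vertex at $q_2$ into a short, narrow region near $q_1$; hence we may choose a sector $S_2\subset\Sigma_2$ with vertex at $q_2$ on which $g$ is defined and with $g(S_2)\subset S_1$. Set $\varphi_2:=\varphi_1\circ g\colon S_2\to\mathbb{C}$, which is holomorphic. If $y_1,y_2\in S_2$ lie in the trace of one leaf $L$, then $g(y_1),g(y_2)\in S_1$ lie in the trace of one leaf $L'$, and since $\varphi_1$ is constant on the trace of $L'$ in $S_1$ we get $\varphi_2(y_1)=\varphi_2(y_2)$. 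Thus $\varphi_2$ is constant on the traces of leaves of $\widetilde{\fa}$ in $S_2$.

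\textbf{Step 3: the moderate property, and the main obstacle.} Being holomorphic at $q_2$, the map $g$ has all derivatives extending continuously to $q_2$, so by Theorem \ref{coefasymp2} it admits its convergent Taylor series $\hat g$ as asymptotic expansion in $S_2$, with zero constant term and first coefficient $b\neq0$; by hypothesis $\varphi_1$ admits a nonzero series $\hat\varphi_1$ in $S_1\supset g(S_2)$. By Theorem \ref{BusAsymp}(VII), $\varphi_2=\varphi_1\circ g$ then admits $\hat\varphi_1\circ\hat g$ as asymptotic expansion in $S_2$; writing $\hat\varphi_1(z)=a_k z^k+\cdots$ with $k$ minimal and $a_k\neq0$, the composite has leading term $a_k b^{k}w^{k}\neq0$, so it is nonzero, giving the required $\Sigma_2$, $S_2$, $\varphi_2$. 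The only genuinely technical point is Step 1: checking that the local leaf-preserving identifications glue, after shrinking neighborhoods, into a single germ $g$ along $\gamma$, plus the elementary fact that a biholomorphism germ at $0$ maps a sector into a region squeezed between two sectors. Both are routine flow-box arguments; the real content of the lemma is merely that $D_j\setminus\sing(\widetilde{\fa})$ is connected, which is what lets the asymptotic first integral be transported freely along $D_j$.
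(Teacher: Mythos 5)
Your proposal is correct and follows essentially the same route as the paper: the paper also transports $\varphi_1$ by the holonomy map $f_\gamma$ of a path $\gamma\subset D_j\setminus\sing(\widetilde{\fa})$ joining $q_2$ to $q_1$ (which is exactly your flow-box construction of $g$), sets $\varphi_2=\varphi_1\circ f_\gamma$ on a sector $S_2$ with $f_\gamma(S_2)\subset S_1$, and obtains the nonzero asymptotic expansion by composing $\hat\varphi_1$ with the expansion of $f_\gamma$ via Theorem \ref{BusAsymp}. Your Step 3 is in fact slightly more careful than the paper's, since you verify explicitly that the composed formal series has nonzero leading coefficient $a_k b^k$.
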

\begin{figure}[h]
\centering 
\includegraphics[width=10cm]{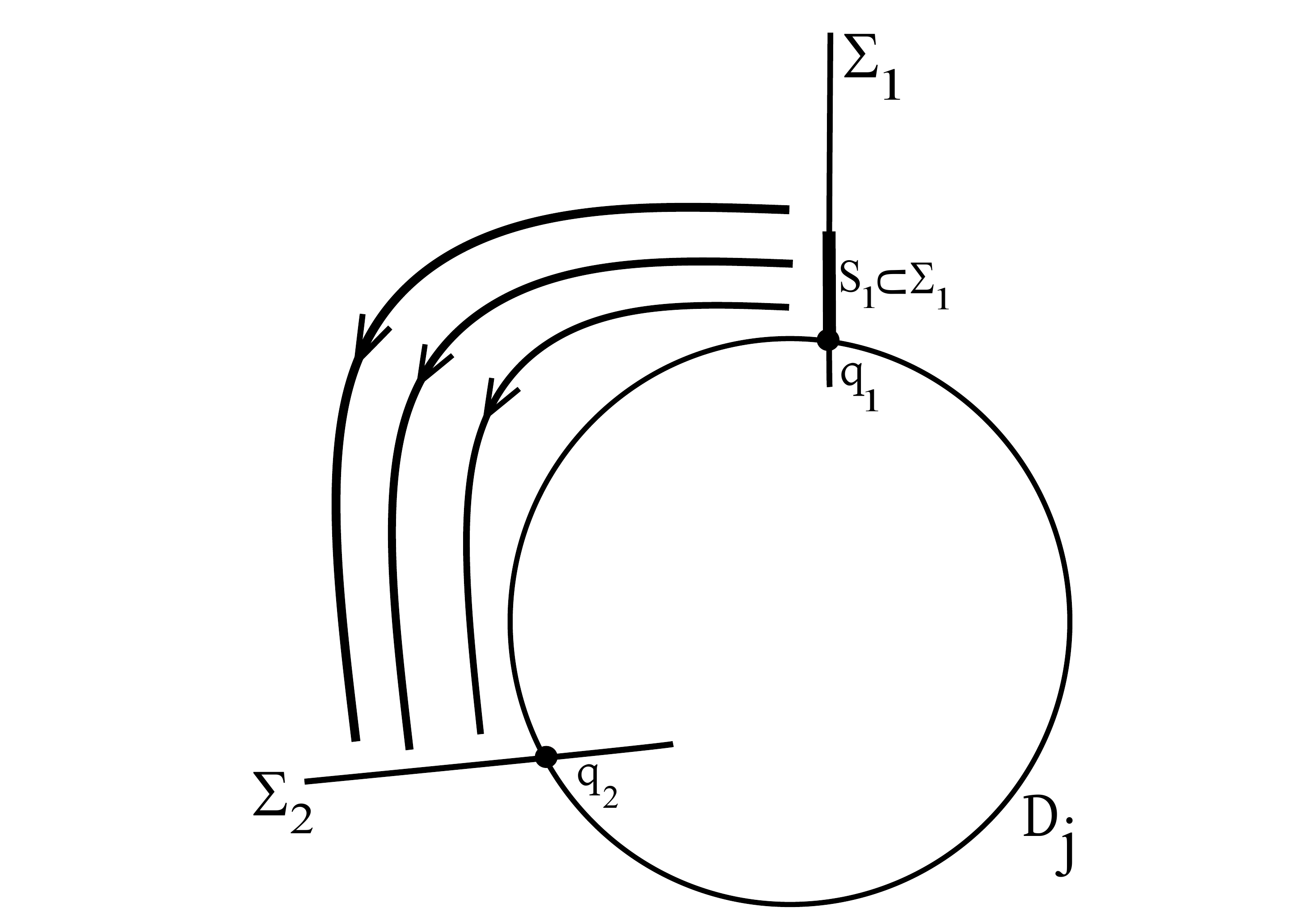} 
\caption{Transport by holonomy.}
\label{figura:PassingByProjective}
\end{figure}
\begin{proof}
Since $D_j$ is a 2-sphere we may choose a simple path $\gamma :[0,1] \rightarrow D_j \setminus sing(\tilde{\fa})$, such that $\gamma(0) = q_2$ and $\gamma(1) = q_1$. Take a transverse section $\Sigma_2 \ni q_2$. Then, we will consider the holonomy map of $\gamma$,  $f_{\gamma} : \Sigma_2 \rightarrow \Sigma_1$. Let $S_2$ be a sector in $\Sigma_2$ with vertex at $q_2$, such that $f_{\gamma}(S_2) \subseteq S_1$. Define $\varphi_2 : S_2 \rightarrow \mathbb{C}$ by $\varphi_2 = \varphi_1 \circ f_{\gamma}$. 
\begin{Claim} \label{claim1LemaHol}
$\varphi_2$ is invariant by $Hol(\mathcal{F},D_j,\Sigma_2, q_2)$
\end{Claim}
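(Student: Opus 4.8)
The plan is to prove Claim \ref{claim1LemaHol} by a straightforward conjugation argument, exploiting that the holonomy groups at two different non-singular points of the same component $D_j$ are conjugated by the holonomy transport along the connecting path $\gamma$. Concretely, fix an arbitrary element $h_2 \in \mathrm{Hol}(\mathcal{F},D_j,\Sigma_2,q_2)$. Since $\gamma$ is a path in $D_j \setminus \mathrm{sing}(\tilde{\fa})$ from $q_2$ to $q_1$, the holonomy transport $f_\gamma \colon \Sigma_2 \to \Sigma_1$ conjugates the two holonomy groups: for this $h_2$ there is $h_1 \in \mathrm{Hol}(\mathcal{F},D_j,\Sigma_1,q_1)$ with $f_\gamma \circ h_2 = h_1 \circ f_\gamma$ as germs at $q_2$. (This is the standard fact that holonomy along a loop based at $q_2$ equals, after conjugating by $f_\gamma$, holonomy along the corresponding loop based at $q_1$; concatenate $\gamma$, the loop, and $\gamma^{-1}$.) I would first shrink $S_2$ if necessary so that both $h_2(S_2)$ and $f_\gamma(S_2)$ stay inside the region where the previous identities hold and so that $\varphi_1$ is defined along $f_\gamma(S_2)$ and $f_\gamma(h_2(S_2)) = h_1(f_\gamma(S_2)) \subset S_1$.

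Next I would use that $\varphi_1$ is constant on the traces of the leaves of $\mathcal{F}$ in $S_1$, which in particular means $\varphi_1$ is invariant under the (virtual, hence ordinary) holonomy of $D_j$ at $\Sigma_1$: $\varphi_1 \circ h_1 = \varphi_1$ on the relevant subsector. Then the computation is immediate:
\begin{eqnarray*}
\varphi_2 \circ h_2(y) & = & \varphi_1 \circ f_\gamma \circ h_2(y) \\
 & = & \varphi_1 \circ h_1 \circ f_\gamma(y) \\
 & = & (\varphi_1 \circ h_1) \circ f_\gamma(y) \\
 & = & \varphi_1 \circ f_\gamma(y) \\
 & = & \varphi_2(y)
\end{eqnarray*}
for every $y$ in a sector with vertex at $q_2$ on which all the maps above are defined; since $h_2 \in \mathrm{Hol}(\mathcal{F},D_j,\Sigma_2,q_2)$ was arbitrary, this proves Claim \ref{claim1LemaHol}.

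After the claim, to finish the lemma I would observe two further points. First, being invariant under $\mathrm{Hol}(\mathcal{F},D_j,\Sigma_2,q_2)$ together with the fact that $f_\gamma$ maps leaf-traces to leaf-traces shows $\varphi_2 = \varphi_1 \circ f_\gamma$ is constant on the traces of each leaf of $\mathcal{F}$ in $S_2$, not merely invariant under holonomy: any two points of $S_2$ in the same leaf are carried by $f_\gamma$ to two points of $S_1$ in the same leaf, where $\varphi_1$ takes equal values. Second, for the asymptotic property I would invoke Theorem \ref{BusAsymp}: the holonomy transport $f_\gamma$ is a germ of biholomorphism fixing the base points, so it has a convergent (hence asymptotic) Taylor expansion $\hat{f}_\gamma$ with $\hat{f}_\gamma(0)=0$; by the composition rule (VII) of Theorem \ref{BusAsymp}, $\varphi_2 = \varphi_1 \circ f_\gamma$ admits $\hat{\varphi}_1 \circ \hat{f}_\gamma$ as asymptotic expansion in $S_2$, and this series is nonzero because $\hat{\varphi}_1$ is nonzero and $\hat{f}_\gamma$ is an invertible series (so composition with it is injective on $\mathbb{C}[[z]]$).

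The main obstacle, and the point requiring the most care, is the conjugation step $f_\gamma \circ h_2 = h_1 \circ f_\gamma$ together with the bookkeeping of sectors: one must ensure that after restricting to a small enough sector $S_2$ the identity of germs actually holds as an identity of functions on $S_2$, that $f_\gamma(S_2) \subset S_1$, and that $h_1$ preserves (a subsector of) $S_1$ so that $\varphi_1 \circ h_1 = \varphi_1$ applies there. All of this is routine but needs the sector $S_2$ to be chosen after $\gamma$, $f_\gamma$, and the finitely many relevant holonomy maps are fixed; I would simply remark that such a choice is possible and proceed. The conjugacy of the holonomy representations at different base points of the same leaf is classical (see \cite{CamachoGeometTheory}, chapter IV), so I would cite it rather than reprove it.
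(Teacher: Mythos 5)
Your proposal is correct and follows essentially the same route as the paper: you conjugate an arbitrary holonomy element at $q_2$ by the transport map $f_\gamma$ (via the concatenated loop $\gamma\circ\delta\circ\gamma^{-1}$) to an element of $\Hol(\mathcal{F},D_j,\Sigma_1,q_1)$, use that $\varphi_1$ is constant on leaf traces to get its invariance under that conjugated element, and unwind the composition to conclude $\varphi_2\circ h_2=\varphi_2$; the paper performs the identical computation, merely starting from $\varphi_1=\varphi_1\circ g_\beta$ and solving for $\varphi_2\circ g_\delta$. Your extra bookkeeping about shrinking $S_2$ is a harmless (indeed welcome) refinement of a point the paper leaves implicit.
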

Indeed, let $g_{\delta} \in Hol(\mathcal{F},\Sigma_2,q_2)$ be the holonomy associated with a closed path $\delta : [0,1] \rightarrow D_j\setminus sing(\tilde{\fa})$, with $\delta(0) = \delta(1) = q_2$. Put $\beta = \gamma \circ \delta \circ \gamma^{-1}$. Then we can consider an element $g_{\beta} \in Hol(\mathcal{F},D_j,\Sigma_1,q_1)$ given by $g_{\beta} = f_{\gamma} \circ g_{\delta} \circ (f_{\gamma})^{-1}$. The fact that $\varphi_1$ is constant in the traces of each leaf implies that $\varphi_1 \circ g_{\beta} = \varphi_1$. Then
\begin{eqnarray*}
\varphi_1 &=& \varphi_1 \circ g_{\beta} \\
& = & \varphi_1 \circ (f_{\gamma} \circ g_{\delta} \circ (f_{\gamma})^{-1}) \\
& = &  (\varphi_1 \circ f_{\gamma}) \circ g_{\delta} \circ (f_{\gamma})^{-1} \\
& = &  \varphi_2 \circ g_{\delta} \circ (f_{\gamma})^{-1}. 
\end{eqnarray*}
From this we have that
\begin{eqnarray*}
\varphi_1 = \varphi_2 \circ g_{\delta} \circ (f_{\gamma})^{-1} &\Rightarrow & \varphi_1 \circ f_{\gamma} = \varphi_2 \circ g_{\delta} \\ &\Rightarrow & \varphi_2 = \varphi_2 \circ g_{\delta}.
\end{eqnarray*}
This proves Claim \ref{claim1LemaHol}.
\begin{Claim} \label{claim2LemaHol}
The function $\varphi_2$ admits a nonzero asymptotic expansion in $S_2$. 
\end{Claim}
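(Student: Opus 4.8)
The plan is to obtain $\varphi_2$ as the composition of $\varphi_1$ with the holonomy map $f_\gamma$ and then invoke the composition rule for asymptotic expansions from Theorem \ref{BusAsymp}(VII). First I would recall that the holonomy map $f_\gamma \colon (\Sigma_2,q_2) \to (\Sigma_1, q_1)$ is a germ of biholomorphism between the transverse sections, hence after choosing parametrizations $(\Sigma_2, q_2) \cong (\mathbb{C},0)$ and $(\Sigma_1, q_1) \cong (\mathbb{C},0)$ it is an honest element of $\Diff(\mathbb{C},0)$, in particular holomorphic at the vertex and with $f_\gamma(0) = 0$. Therefore $f_\gamma$ admits its convergent Taylor series $\hat f_\gamma$ as asymptotic expansion on $S_2$ (this is precisely Theorem \ref{BusAsymp}(II), since a neighborhood of $0$ contains a full punctured disc and in particular the sector $S_2$ after shrinking).

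Next, since $\varphi_1$ admits the nonzero asymptotic expansion $\hat\varphi_1$ on $S_1$ by hypothesis, and $f_\gamma(S_2) \subseteq S_1$ by the choice of $S_2$, Theorem \ref{BusAsymp}(VII) applies directly to the composition $\varphi_2 = \varphi_1 \circ f_\gamma$: it admits as asymptotic expansion the formal series $\hat\varphi_1 \circ \hat f_\gamma$ obtained by formally substituting $\hat f_\gamma$ into $\hat\varphi_1$ and collecting like powers of the variable on $\Sigma_2$. It remains only to check that $\hat\varphi_1 \circ \hat f_\gamma$ is nonzero. Write $\hat f_\gamma(y) = \lambda y + \cdots$ with $\lambda \neq 0$ (it is a diffeomorphism germ, so $\hat f_\gamma'(0) \neq 0$) and $\hat\varphi_1(x) = \sum_{r \geq r_0} a_r x^r$ with $a_{r_0} \neq 0$, where $r_0$ is the order of the (nonzero) series $\hat\varphi_1$; then the lowest-order term of $\hat\varphi_1 \circ \hat f_\gamma$ is $a_{r_0}\lambda^{r_0} y^{r_0} \neq 0$, so $\hat\varphi_1 \circ \hat f_\gamma$ has the same order $r_0$ and is in particular nonzero. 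This completes the proof of the claim, and combining it with Claim \ref{claim1LemaHol} gives the statement of Lemma \ref{LemmaTransHol}.

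I do not anticipate a serious obstacle here: the only mild subtlety is making sure the substitution in part (VII) of Theorem \ref{BusAsymp} makes formal sense, which it does because $\hat f_\gamma$ has zero constant term (as $f_\gamma(0) = 0$), so each coefficient of $\hat\varphi_1 \circ \hat f_\gamma$ is a finite sum. One should also be slightly careful that shrinking $S_2$ to guarantee $f_\gamma(S_2) \subseteq S_1$ does not destroy the asymptotic expansion of $\varphi_2$ — but asymptotic expansions are preserved under passing to subsectors by definition, so this is automatic.
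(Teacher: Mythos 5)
Your proposal is correct and follows essentially the same route as the paper: obtain an asymptotic expansion for the holonomy map $f_\gamma$ (the paper invokes Theorem \ref{coefasymp2}, you use holomorphy of the diffeomorphism germ — an equivalent step) and then apply the composition rule of Theorem \ref{BusAsymp} to $\varphi_2=\varphi_1\circ f_\gamma$. Your explicit lowest-order-term computation $a_{r_0}\lambda^{r_0}y^{r_0}\neq 0$ is in fact a cleaner justification of the nonvanishing of $\hat\varphi_1\circ\hat f_\gamma$ than the paper's brief remark.
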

In fact, by Theorem \ref{coefasymp2}, $f_{\gamma}$ admits $\hat{f_{\gamma}}(z) = \sum_{r=0}^{\infty}\frac{f_{r}}{r!}z^{r}$ as asymptotic expansion in the sector $S_2$, where $f_r = lim_{z \rightarrow 0}f^{(r)}(z)$ (recalling that $f^{(r)}$ is the r-th derivative of $f$). Since $lim_{z \rightarrow 0}f(z) \neq 0$ then $\hat{f_{\gamma}} \not\equiv 0$. Let $\hat{\varphi}$ be the nonzero asymptotic expansion of  ${\varphi}$. By Theorem \ref{BusAsymp}, $\varphi_2 = \varphi \circ f_{\gamma}$ admits $\hat{\varphi} \circ \hat{f_{\gamma}}$ as nonzero asymptotic expansion in $S_2$. This proves Claim \ref{claim2LemaHol}.

The separatrix $\Gamma_2$ divides $U$ in two parts $U = U_0 \cup U_1$ such that $q_0 \in U_0$ and $q_0 \not\in U_0$. Finally by Dulac correspondence (Lemma \ref{LemmaDulac}) there is a point $q_3 \in U_1$ such that there are a transverse section $\Sigma_{3} \ni q_{3}$, a sector $S_3 \subset \Sigma_{3}$ with vertex at $q_{3}$, and a holomorphic function  $\varphi_3 : S_3 \rightarrow \mathbb{C}$, such that $\varphi_3$ is constant in the traces of each leaf of $\mathcal{F}$ in $S_3$ and which admits a nonzero asymptotic expansion in $S_3$.
\end{proof}

\begin{Proposition} \label{NextEnough}
Let $\widetilde{\fa}$ be a foliation on a compact complex surface $\tilde{M}$, and  $D \subset \tilde{M}$ a compact (codimension one) invariant divisor with normal crossing, no cycles and no triple points. We write $D = \cup_{j=1}^{k}D_{j}$, where each $D_{j}$ is an irreducible smooth component. Let $p_k$ be a  point in $D_k \cap sing(\widetilde{\fa})$. Suppose that there is a separatrix such that $\Gamma_{k}\cap D_k = \{p_k\}$ and the pair $(\widetilde{\fa},\Gamma_k)$ admits a moderate sectorial first integral. Then for every non-singular point $p \in D_j \setminus sing(\widetilde{\fa})$ and any transverse section $\Sigma_p \ni p$ there is a sector $S_p \subset \Sigma_p$ with vertex at $p$ and a moderate sectorial first integral $\varphi_p :S_p \rightarrow \mathbb{C}$.
\end{Proposition}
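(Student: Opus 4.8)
The plan is to propagate the moderate sectorial first integral from a neighbourhood of $\Gamma_k$ to a transverse section at \emph{every} non-singular point of $D$, using the tree structure of the divisor together with the two transport mechanisms already in hand: transport by holonomy inside a fixed component (Lemma \ref{LemmaTransHol}) and the Dulac correspondence across a corner (Lemma \ref{LemmaDulac}). Since $D$ is connected and has no cycles and no triple points, its dual graph --- vertices the $D_j$, edges the corners --- is a tree; rooting it at $D_k$, each $D_j$ is joined to $D_k$ by a unique chain $D_k=D_{j_0},D_{j_1},\dots,D_{j_r}=D_j$ with $q_i:=D_{j_{i-1}}\cap D_{j_i}$ a corner. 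It therefore suffices to manufacture a moderate sectorial first integral at one non-singular point of $D_k$ and then carry it, link by link, along each chain.

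First I would treat the starting step: passing from $\Gamma_k$ to $D_k$. If $\Gamma_k$ is itself a component of $D$ (necessarily one meeting $D_k$ at the corner $p_k$), then the sectorial first integral of $(\widetilde{\fa},\Gamma_k)$ already lives on a transverse section through a non-singular point of that component, and we restart the propagation there. Otherwise $\Gamma_k$ is a branch through $p_k$ transverse to $D_k$, so $p_k$ is an irreducible singularity with separatrices $\Gamma_k$ and $D_k$. The key point is that the holonomy $h$ of $\Gamma_k$ around $p_k$, on a transverse section meeting $\Gamma_k$, is constant on the traces of the leaves, hence $\varphi\circ h=\varphi$ on a subsector; by Lemma \ref{FormalInvar}, $\hat\varphi\circ h=\hat\varphi$ as formal series, and $\mathrm{Inv}(\hat\varphi)$ is finite (this is precisely the content of the proof of Proposition \ref{INVfinite}), so $h$ has finite order. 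Going through the classification of irreducible singularities, this forces $p_k$ to be non-degenerate with $\lambda\in\qq_{<0}$: a non-resonant non-degenerate singularity has an irrational-rotation holonomy, and a saddle-node has a non-trivial tangent-to-identity holonomy along its strong separatrix, both of infinite order and so excluded. By Theorem \ref{Theorem:analyticconjugation}, $p_k$ is then linearizable, i.e.\ is given in suitable coordinates by a resonant normal form $n\,x\,dy+m\,y\,dx=0$, and the purely local computation inside the proof of Lemma \ref{LemmaDulac} applies: composing $\varphi$ with the ramified leaf-correspondence $\xi(z)=z^{\nu}$ produces a holomorphic function on a sector of a transverse section through a non-singular point $q_0\in D_k$, constant on leaf-traces, with nonzero asymptotic expansion $\hat\varphi\circ\hat\xi$ (Theorem \ref{BusAsymp}).

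Next I would run the induction along the tree. Suppose some non-singular point of $D_{j_{i-1}}$ carries a moderate sectorial first integral. By Lemma \ref{LemmaTransHol} we move it, inside $D_{j_{i-1}}$, to a non-singular point arbitrarily close to the corner $q_i$; there it is invariant under the holonomy of $D_{j_{i-1}}$ around $q_i$, and the same finiteness argument shows $q_i$ is a non-degenerate resonant singularity in the normal form $n\,x\,dy+m\,y\,dx=0$, so Lemma \ref{LemmaDulac} applies at $q_i$ and delivers a moderate sectorial first integral at a non-singular point of $D_{j_i}$ near $q_i$. After finitely many links we reach a non-singular point of $D_j$ with a moderate sectorial first integral; one final application of Lemma \ref{LemmaTransHol} inside $D_j$, followed by composition with the germ of biholomorphism along the transverse fibration identifying the reached section with the prescribed $\Sigma_p$ (it fixes $p$, hence is harmless for asymptotic expansions by Theorem \ref{BusAsymp}), yields the required sector $S_p\subset\Sigma_p$ and moderate sectorial first integral $\varphi_p\colon S_p\to\co$ at the prescribed point $p\in D_j$.

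The hard part is the bookkeeping at the corners: Lemma \ref{LemmaDulac} is available only for corners already in the resonant normal form, so at $p_k$ and at every link one must first \emph{prove} the corner is of that kind, and the only leverage is the finiteness of the invariance group of a non-constant function with nonzero asymptotic expansion (Proposition \ref{INVfinite}). Making the classification step airtight --- in particular ruling out that a saddle-node corner can survive on a chain, by identifying the holonomy along its strong separatrix as a non-trivial parabolic germ of infinite order --- is the crux; it is this mechanism that, in the applications, gets packaged into the hypothesis that $\fa$ is a generalized curve (no saddle-nodes at all) or that $\Gamma$ is non-weak, and it is the reason those hypotheses cannot simply be dropped.
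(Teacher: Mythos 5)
Your proposal is correct and follows essentially the same route as the paper's proof: propagate the moderate sectorial first integral along the chain of divisor components, alternating transport by holonomy inside a component (Lemma \ref{LemmaTransHol}) with the Dulac correspondence across each corner (Lemma \ref{LemmaDulac}), using Proposition \ref{INVfinite} together with Theorem \ref{Theorem:analyticconjugation} to put the corner in the resonant normal form $nx\,dy+my\,dx=0$ before each crossing, plus a separate remark for separatrices not contained in the divisor. The one step you flag as delicate --- ruling out a saddle-node corner reached through its weak manifold, where the holonomy may well have finite order --- is likewise left implicit in the paper's own proof (which invokes Theorem \ref{Theorem:analyticconjugation}, valid only for non-degenerate corners) and is handled in the applications by the generalized-curve and non-weak hypotheses, exactly as you observe.
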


\begin{figure}[h]
\centering 
\includegraphics[width=10cm]{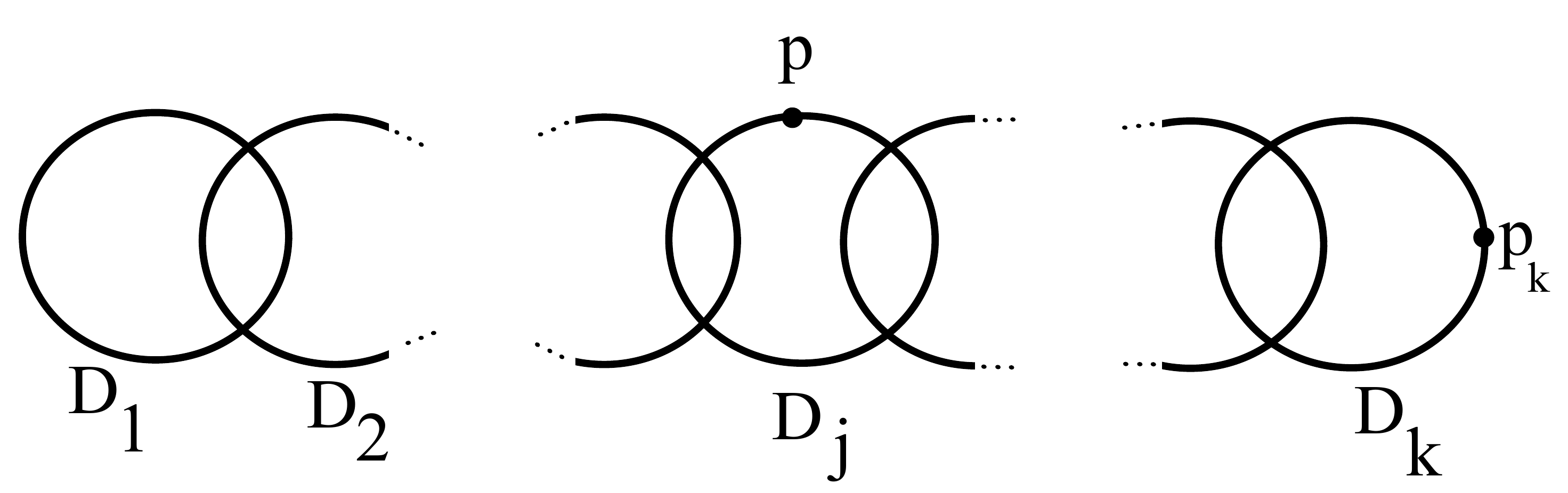} 
\caption{$p \in D_j \setminus sing(\widetilde{\fa})$ and $p_k \in D_k\cap sing(\widetilde{\fa})$.}
\label{figura:PropositionDulac}
\end{figure}

\begin{proof}
First we will suppose that each separatrix other than $\Gamma_k$ is contained in a projective line of $D$. Since $(\widetilde{\fa},\Gamma_k)$ admits a moderate sectorial first integral by Lemma \ref{LemmaDulac} there are a point $q_{k} \in D_k \setminus sing(\widetilde{\mathcal{F}})$, a  transverse section $\Sigma_{k} \ni q_{k}$, a sector $S_k \subset \Sigma_{k}$ with vertex at $q_{k}$, and a holomorphic function  $\varphi_k : S_k \rightarrow \mathbb{C}$, such that $\varphi_k$ is constant in the traces of each leaf of $\mathcal{F}$ in $S_k$ and which admits a nonzero asymptotic expansion in $S_k$. Let $t_k \in D_k \setminus sing(\widetilde{\fa})$ be a point close enough to  the corner $p_{k-1} \in D_{k-1} \cap D_{k}$. By Lemma \ref{LemmaTransHol} there are a  transverse section $\Sigma'_{k} \ni t_k$, a sector $S'_{k} \subset \Sigma'_{k}$ with vertex at $t_k$, and a holomorphic function  $\varphi'_k : S'_k \rightarrow \mathbb{C}$, such that $\varphi'_k$ is constant in the traces of each leaf of $\mathcal{F}$ in $S'_k$ and which admits a nonzero asymptotic expansion in $S'_k$. Consider a separatrix $\Gamma_{t_k} \subset D_k$ with $t_k \in \Gamma_{t_k}$. By Proposition \ref{INVfinite} the holonomy map by this separatrix is finite. Then by Theorem \ref{Theorem:analyticconjugation} there is a neighborhood $U_{k-1} \ni p_{k-1}$ such that $\widetilde{\fa}|_{U_{k-1}}$ assume the normal form. Then we apply again the Dulac correspondence, that is, by Lemma \ref{LemmaDulac} there are a point $q_{k-1} \in D_{k-1} \setminus sing(\widetilde{\mathcal{F}})$, a  transverse section $\Sigma_{k-1} \ni q_{k-1}$, a sector $S_{k-1} \subset \Sigma_{k-1}$ with vertex at $q_{k-1}$, and a holomorphic function  $\varphi_{k-1} : S_{k-1} \rightarrow \mathbb{C}$, such that $\varphi_{k-1}$is constant in the traces of each leaf of $\mathcal{F}$ in $S_{k-1}$ and which admits a asymptotic expansion in $S_{k-1}$. We can repeat this process until we have a transverse section $\Sigma_{p}$ with a sector $S_{p} \subset \Sigma_{p}$ and a moderate sectorial first integral $\varphi_p$ defined in $S_{p}$. 

Now we consider the case that there is a singularity $q \in D_i \cap sing(\widetilde{\fa})$ such that the separatrix $\Gamma_i$ passing through $q$ is not contained in $D_i$. From the previous case we obtain that there is neighborhood of $q$ that admits first integral. Then it is enough apply the Dulac correspondence to obtain that the pair $(\widetilde{\fa},\Gamma_i)$ admits a moderate sectorial first integral. Therefore, the proof following repeating the previous case until arrive to the point $p$.

\end{proof}

\section{Proof of Theorem \ref{Main}} \label{SectionMain}
We are now in conditions to prove our first main result.
\begin{proof}[\textbf{Proof of Theorem \ref{Main}}]
Let $\mathcal{F}$ be a non-dicritical germ of generalized curve at $0 \in \mathbb{C}^2$. Assume that some pair $(\mathcal{F}, \Gamma)$ admits a moderate sectorial first integral. We will use induction on the number $r$ of blow-ups of the resolution of $\fa$. 
\\
\\
\noindent \textbf{Case $r=0$}\\
We can divide this case in two:
\\
\item{\textbf{i)} \emph{Saddle-node case:}} 
By generalized curve hypothesis this case does not occur. 
\\
\item{\textbf{ii)} \emph{Non-degenerated case:}} 
We write $\mathcal{F}$ as $xdy - \lambda y dx + hot = 0$, with $\lambda \in \mathbb{C} \setminus \mathbb{Q}_{+}$ with invariant axes. It is known (\cite{Ilyashenko} pg. 70) that there are two transverse separatrices such that after a change of coordinates, we can assume that the separatrizes are the coordinate axes. We may assume $\Gamma : (y = 0)$ and a transverse section $\Sigma : (x = 1)$. The holonomy map of this separatrix is of the form
\[
h(y) = e^{2 \pi i \theta}y + ...
\] 
with $\theta \in \mathbb{R} \setminus \mathbb{Q}_{+}$. Let $\varphi$ be the moderate sectorial first integral admitting $\hat{\varphi}$ as non zero asymptotic expansion in a sector $S \subset \Sigma$ with vertex $p \in \Sigma \cup \Gamma$. Then $\varphi \circ h(y) = \varphi (y) $ in $S$. By Proposition \ref{INVfinite} $h$ is finite. By Theorem \ref{Theorem:analyticconjugation}, this implies that $\mathcal{F}$ is analytically linearizable as $m_{1}xdy + m_{2}ydx = 0$, with $m_{1}, m_{2} \in \mathbb{N}$. Therefore, there is a holomorphic first integral $F(x,y) = x^{m_2} + y^{m_1}$.
\\
\\
\noindent \textbf{Case $r \Rightarrow r + 1$}
\\
Suppose now that the theorem holds for foliations that can be reduced with $r > 0$ blow ups and let $\fa$ be reduced with $r+1$ blow-ups. We perform a first blow up $\pi_{1} : \widetilde{\mathbb{C}}_{0}^{2} \rightarrow \mathbb{C}^2$ obtaining a foliation $\fa(1) = \pi_1^{*}(\fa)$. Since $\fa$ is reduced with $r + 1$ blow-ups, each singularity $p_1 \in sing(\fa(1)) \subset \mathbb{P}_1$ can be reduced with at most $r$ blow ups. 
\begin{Claim} \label{ClaimA}
Given a singularity $p_1 \in sing(\fa(1)) \subset \mathbb{P}_1$ there is a separatrix $\Gamma_{p_1}$ of $\fa(1)$ through $p_1$ such that the pair $(\fa(1), \Gamma_{p_1})$ admits a moderate sectorial first integral.  
\end{Claim}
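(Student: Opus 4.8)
The plan is to transport the moderate sectorial first integral of $(\fa,\Gamma)$ up through the blow-up $\pi_1$ and then over the exceptional line $\mathbb{P}_1$, using the tools of Section \ref{DulacSection}; the separatrix witnessing the Claim at a given $p_1$ will be nothing but the germ of $\mathbb{P}_1$ at $p_1$ itself.

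First, I would lift the hypothesis to $\fa(1)$. Since $\fa$ is non-dicritical, $\mathbb{P}_1$ is invariant by $\fa(1)$, and the strict transform $\widetilde\Gamma$ of $\Gamma$ is a separatrix of $\fa(1)$ meeting $\mathbb{P}_1$ at a single point, which lies in $sing(\fa(1))$ because the two invariant curves $\widetilde\Gamma$ and $\mathbb{P}_1$ pass through it. As $\pi_1$ restricts to a biholomorphism of the complement of $\mathbb{P}_1$ onto $\mathbb{C}^2\setminus\{0\}$ conjugating $\fa(1)$ with $\fa$, any transverse section $\widetilde\Sigma$ to $\fa(1)$ through a point $\widetilde p\in\widetilde\Gamma$ not on $\mathbb{P}_1$ is carried onto a transverse section $\Sigma=\pi_1(\widetilde\Sigma)$ to $\fa$ through $p=\pi_1(\widetilde p)\neq 0$. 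Feeding $\Sigma$ into the hypothesis yields an $\fa$-invariant sector $S\subset\Sigma$ with vertex at $p$ and a holomorphic $\varphi\colon S\to\mathbb{C}$ constant on the traces of the leaves, with nonzero asymptotic expansion $\hat\varphi$. Then $\widetilde\varphi=\varphi\circ(\pi_1|_{\widetilde\Sigma})$ is constant on the traces of the leaves of $\fa(1)$ on a sector $\widetilde S\subset\widetilde\Sigma$ with vertex at $\widetilde p$, obtained by shrinking $(\pi_1|_{\widetilde\Sigma})^{-1}(S)$ (permissible since, in holomorphic coordinates centred at $\widetilde p$ and $p$, the map $\pi_1|_{\widetilde\Sigma}$ has the form $w\mapsto c_1w+\cdots$ with $c_1\neq 0$). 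By Theorem \ref{BusAsymp}(VII), $\widetilde\varphi$ admits the series obtained by substituting the expansion of $\pi_1|_{\widetilde\Sigma}$ into $\hat\varphi$, and this series is nonzero because $\hat\varphi\neq0$ and $c_1\neq0$. Hence $(\fa(1),\widetilde\Gamma)$ admits a moderate sectorial first integral.

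Next, I would propagate this over $\mathbb{P}_1$. Applying Proposition \ref{NextEnough} with $D=\mathbb{P}_1$, the separatrix $\widetilde\Gamma$ and the point $\widetilde\Gamma\cap\mathbb{P}_1$, we obtain: for every non-singular $q\in\mathbb{P}_1\setminus sing(\fa(1))$ and every transverse section $\Sigma_q\ni q$ there are a sector $S_q\subset\Sigma_q$ with vertex at $q$ and a moderate sectorial first integral $\varphi_q\colon S_q\to\mathbb{C}$. Now fix $p_1\in sing(\fa(1))\cap\mathbb{P}_1$ and choose such a $q$ arbitrarily close to $p_1$ together with a transverse section $\Sigma_q\ni q$. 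Since $\mathbb{P}_1$ is invariant, at the regular point $q$ the leaf of $\fa(1)$ is locally $\mathbb{P}_1$, so $\Sigma_q$ meets $\mathbb{P}_1$ precisely in $q$; writing $\Gamma_{p_1}$ for the germ of $\mathbb{P}_1$ at $p_1$ — a separatrix of the germ of $\fa(1)$ at $p_1$ — the triple $(\Sigma_q,S_q,\varphi_q)$ exhibits a moderate sectorial first integral for the pair $\big((\fa(1),p_1),\Gamma_{p_1}\big)$. That the same data exist on an arbitrary transverse section meeting $\Gamma_{p_1}$ near $p_1$ follows by transporting $\varphi_q$ through the holonomy of $\mathbb{P}_1\setminus sing(\fa(1))$ exactly as in Lemma \ref{LemmaTransHol}. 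This proves Claim \ref{ClaimA}, with $\Gamma_{p_1}$ the germ of $\mathbb{P}_1$ at $p_1$.

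The delicate point is the lift in the second paragraph: one must verify that pulling back by $\pi_1$ carries an invariant sector to an invariant sector and, above all, that the \emph{nonzero} asymptotic expansion is preserved under the composition — which is exactly where the composition rule of Theorem \ref{BusAsymp} and the fact that $\pi_1$ induces an honest diffeomorphism on the transverse section enter. Once the moderate datum sits at a single separatrix of $\fa(1)$ meeting $\mathbb{P}_1$, the propagation over $\mathbb{P}_1$ is a direct application of Proposition \ref{NextEnough} and Lemma \ref{LemmaTransHol}.
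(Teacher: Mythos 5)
Your proposal is correct and follows essentially the same route as the paper: pull back $\Gamma$, $\Sigma$ and $\varphi$ through $\pi_1$ to get a moderate sectorial first integral for $(\fa(1),\widetilde\Gamma)$ at the point where $\widetilde\Gamma$ meets $\mathbb{P}_1$, then spread it over $\mathbb{P}_1$ and take the germ of $\mathbb{P}_1$ itself as the separatrix through each $p_1$. The only cosmetic difference is that you invoke Proposition \ref{NextEnough} (itself proved by Lemmas \ref{LemmaDulac} and \ref{LemmaTransHol}) where the paper applies those two lemmas directly, and you spell out more carefully why the composition with $\pi_1|_{\widetilde\Sigma}$ preserves the nonzero asymptotic expansion.
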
 

\begin{figure}[h]
\centering 
\includegraphics[width=13cm]{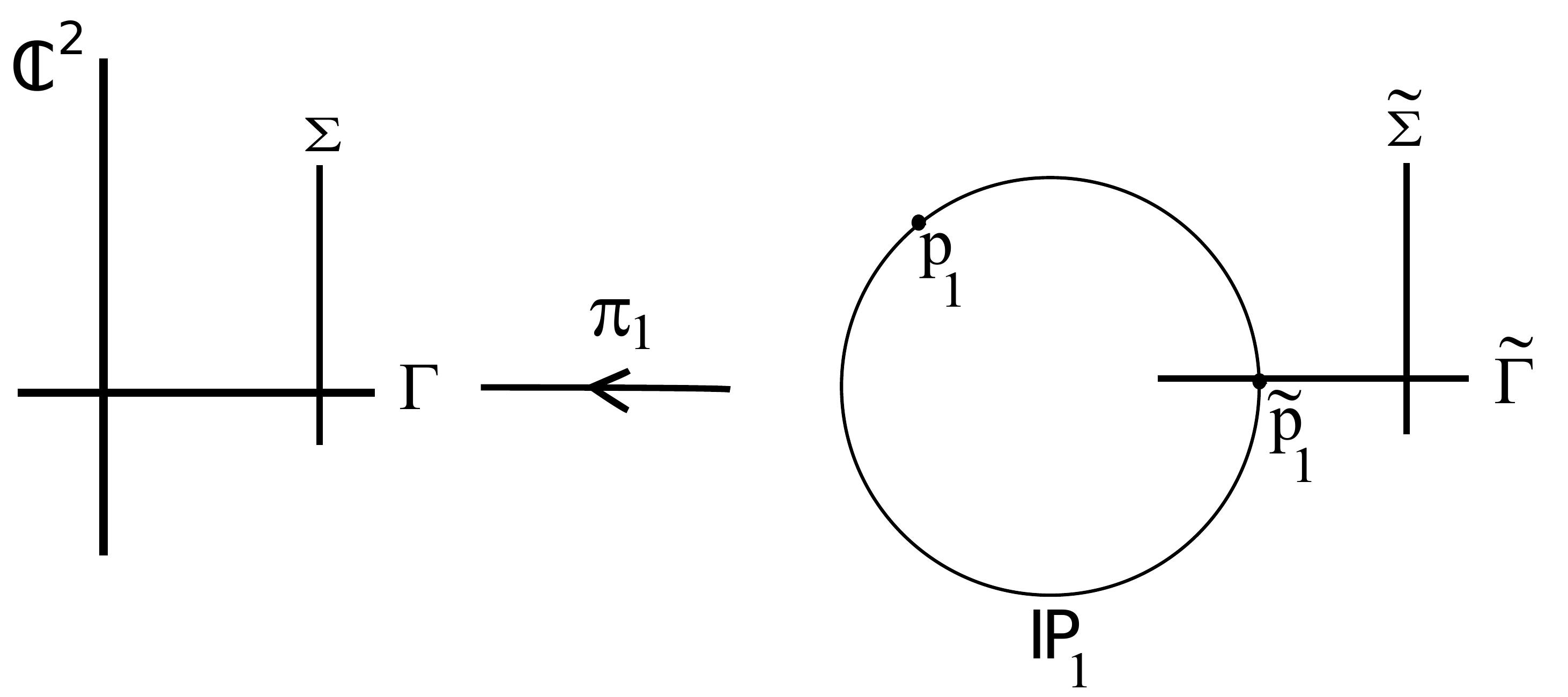} 
\caption{$\widetilde{\Gamma} = \pi_{1}^{*}(\Gamma)$ and $\widetilde{\Sigma} = \pi_{1}^{*}(\Sigma)$. }
\label{figura:FirstBlowUp}
\end{figure}
\begin{proof} [Proof of Claim \ref{ClaimA}]
The pull-back $\widetilde{\Gamma}$ of the separatrix $\Gamma$ is a separatrix of $\fa(1)$ for some singularity $\tilde{p_1}$ of $\fa(1)$ in $\mathbb{P}_1$. Furthermore, the pull-back of $\Sigma$ is a transverse section $\widetilde{\Sigma}$ to $\widetilde{\Gamma}$ where we can define a moderate sectorial first integral $\widetilde{\varphi} : \widetilde{S} \subset \widetilde{\Sigma} \rightarrow \mathbb{C}$ given by $\widetilde{\varphi} = \varphi \circ \pi_{1}$. This proves the claim for the singularity $\tilde{p_1}$.
\begin{figure}[h]
\centering 
\includegraphics[width=8cm]{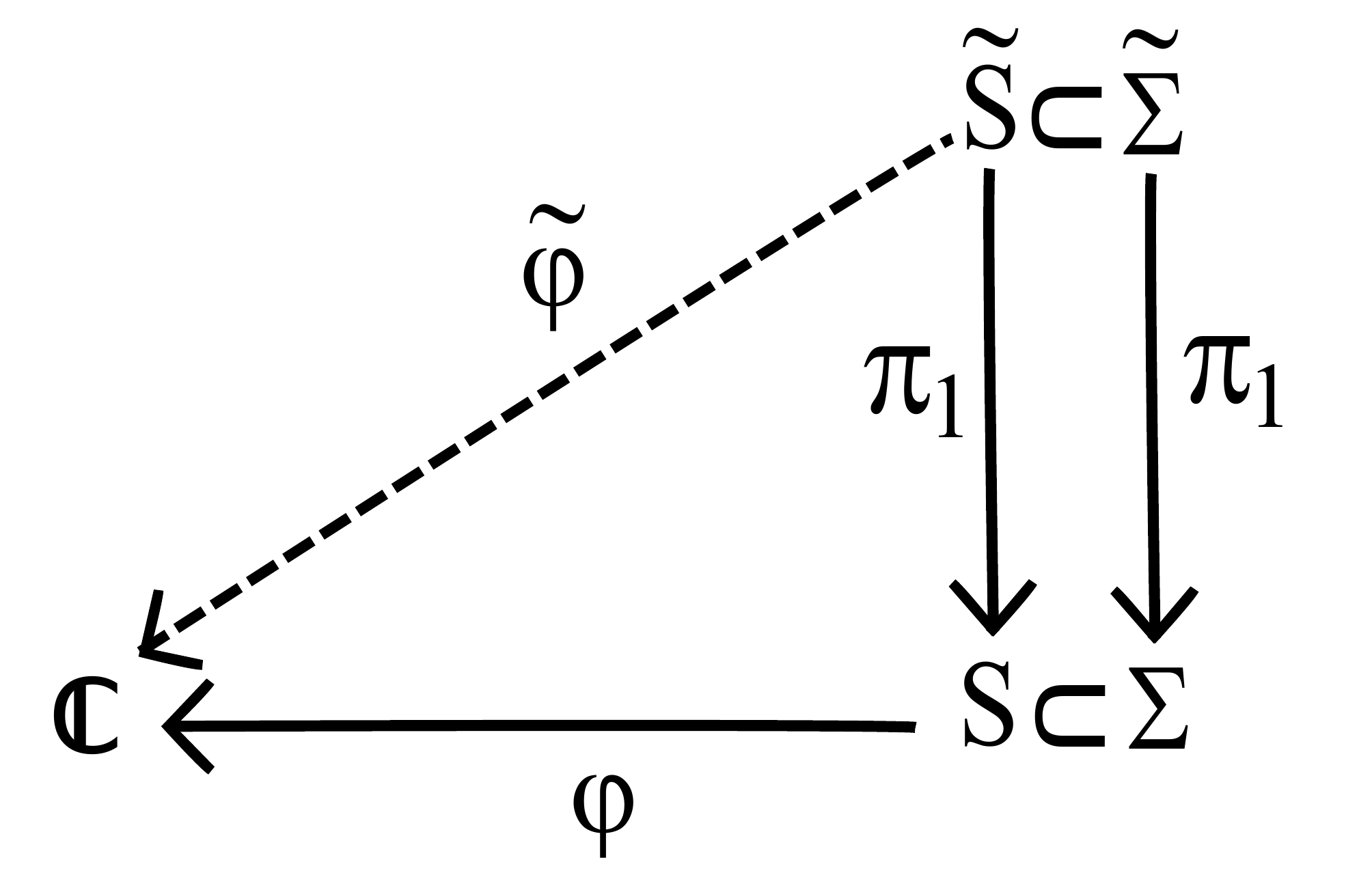} 
\caption{$\pi_1$ is a diffeomorphism outside of the origin}
\label{figura:diagram}
\end{figure}
Consider $p_1 \in sing(\fa(1)) \subset \mathbb{P}_1$. 

By Lemma \ref{LemmaDulac}, there are a point $q_{1} \in \mathbb{P}_1 \setminus sing(\fa(1))$, a  transverse section $\Sigma_{1} \ni q_{1}$, a sector $S_{1} \subset \Sigma_{1}$ with vertex at $q_{1}$, and a holomorphic function  $\varphi_{1} : S_{1} \rightarrow \mathbb{C}$, such that $\varphi_{1}$is constant in the traces of each leaf of $\mathcal{F}$ in $S_{1}$ and which admits a asymptotic expansion in $S_{1}$. Let us take a point $q_2$ taken sufficiently close to $p_1$.  By Lemma \ref{LemmaTransHol} there are a  transverse section $\Sigma_{2} \ni q_{2}$, a sector $S_{2} \subset \Sigma_{2}$ with vertex at $q_{2}$, and a holomorphic function  $\varphi_{2} : S_{2} \rightarrow \mathbb{C}$, such that $\varphi_{2}$ is constant in the traces of each leaf of $\mathcal{F}$ in $S_{2}$ and which admits a asymptotic expansion in $S_{2}$. Taking $\mathbb{P}_1$ itself as separatrix through $p_1$ the claim  is proved for any singularity in $\mathbb{P}_1$. This proves Claim \ref{ClaimA}.
\end{proof}

\begin{figure}[h]
\centering 
\includegraphics[width=13cm]{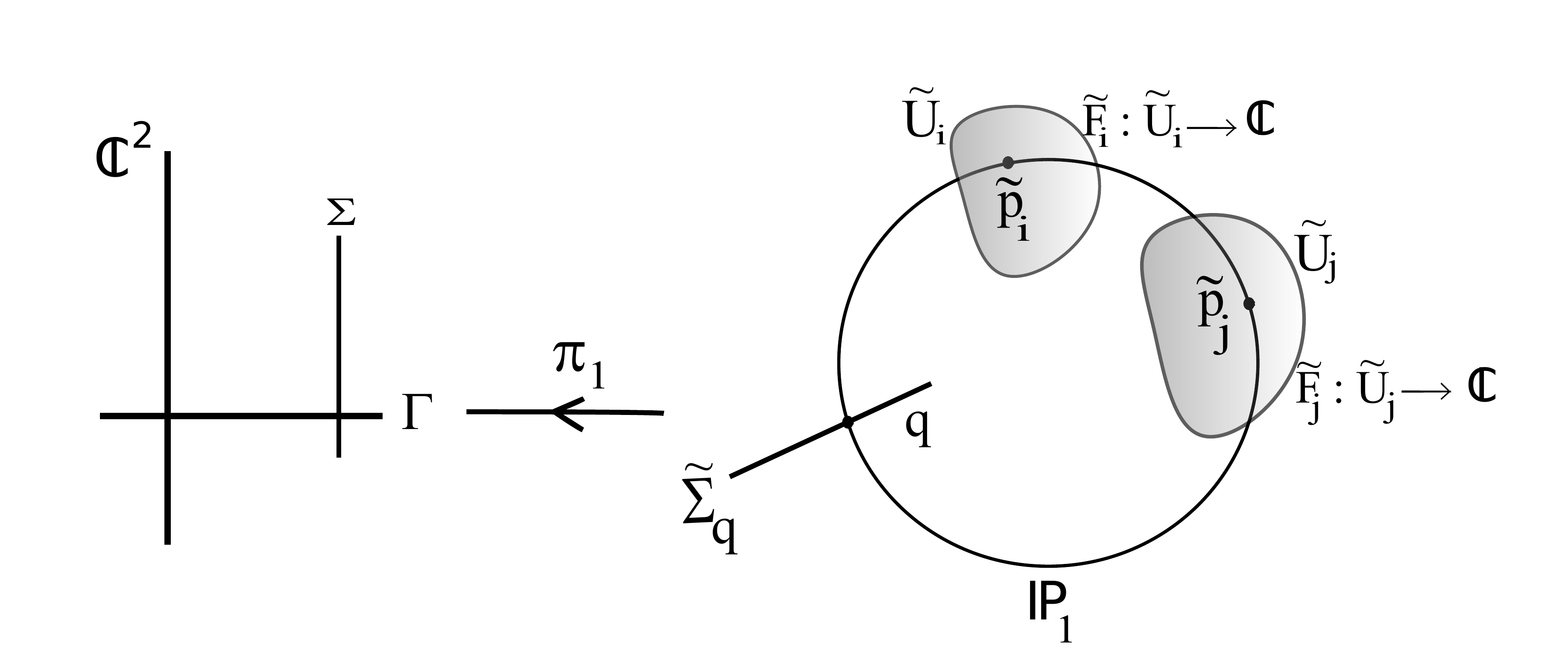} 
\caption{Local first integrals.}
\label{figura:FigBasic}
\end{figure}

By the induction hypothesis each singularity $\widetilde{p}_{j} \in sing(\widetilde{\fa}(1))$ admits a holomorphic first integral say $\widetilde{F}_j : \widetilde{U}_j \rightarrow \mathbb{C}$ defined in a neighborhood $\widetilde{p}_{j} \in \widetilde{U}_j \subset \widetilde{\mathbb{C}}^{2}_{0}$. Choose now a point $q \in \mathbb{P}(1) \setminus sing(\widetilde{\fa}(1))$ and a transverse disc $\widetilde{\Sigma}_q$ centered at $q$. Let $\widetilde{\varphi} : \widetilde{S} \subset \widetilde{\Sigma} \rightarrow \mathbb{C}$ be the inverse image of $\varphi: S \subset \Sigma \rightarrow \mathbb{C}$ by $\pi_1$. By holonomy map and Dulac correspondence (Lemmas \ref{LemmaDulac} and \ref{LemmaTransHol}) and recalling that $\widetilde{p}_1$ admits a holomorphic first integral, there exists a function $\widetilde{\varphi}_{q}: \widetilde{S}_q \rightarrow \mathbb{C}$ defined in a sector $\widetilde{S}_q  \subset \widetilde{\Sigma}_q $ which is a moderate sectorial first integral for $\widetilde{\fa}(1)$ at $q$ for the pair $(\widetilde{\fa}(1), \mathbb{P}_1)$. By Proposition \ref{INVfinite} the invariance group $Inv_{\widetilde{S}_q}(\widetilde{\varphi}_q)$ is finite. Denote by $\widetilde{f}_j$ the holonomy extension of $\widetilde{F}_j$ to the transverse section $\widetilde{\Sigma}_q$. Let $ Inv(\widetilde{f}_{1}, \dots, \widetilde{f}_{n}) < Diff(\widetilde{\Sigma}_q, q)$ be a subgroup generated by all the invariance groups of the $\widetilde{f}_i$.
\begin{Claim} \label{ClaimMainGroup}
$ Inv(\widetilde{f}_{1}, \dots, \widetilde{f}_{n}) $ is a finite group. 
\end{Claim}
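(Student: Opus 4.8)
The plan is to realize $\mathrm{Inv}(\widetilde{f}_1,\dots,\widetilde{f}_n)$ as a subgroup of the invariance group of a single function with nonzero asymptotic expansion, and then invoke Proposition \ref{INVfinite}. First I would observe that each $\widetilde{f}_j$, being the holonomy extension of the holomorphic first integral $\widetilde{F}_j$ along the divisor to the common transverse section $\widetilde{\Sigma}_q$, has finite order: indeed $\widetilde{F}_j|_{\widetilde{\Sigma}_q}$ is a genuine holomorphic first integral near $q$ (after transport), so its invariance group is finite by the disc case of Proposition \ref{INVfinite} (part (II) of Theorem \ref{BusAsymp} applies), and $\widetilde{f}_j$ lies in it. The subtle point is that the groups $\mathrm{Inv}(\widetilde{f}_j)$ for different $j$ need not a priori have a common finite upper bound, so one cannot just multiply cardinalities; the whole subgroup they generate could in principle be infinite. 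This is where the moderate sectorial first integral $\widetilde{\varphi}_q$ on $\widetilde{S}_q$ enters.

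The key step is the following: every generator $\widetilde{f}_j$ lies in $\mathrm{Inv}_{\widetilde{S}_q}(\widetilde{\varphi}_q)$. To see this, recall that $\widetilde{\varphi}_q$ is constant on the traces in $\widetilde{S}_q$ of the leaves of $\widetilde{\fa}(1)$, and $\widetilde{f}_j$ maps each leaf-trace to itself (it is a holonomy map of $\mathbb{P}_1$, hence lies in the virtual holonomy group, whose defining property is precisely $\widetilde{L}_z = \widetilde{L}_{\widetilde{f}_j(z)}$). Hence $\widetilde{\varphi}_q \circ \widetilde{f}_j = \widetilde{\varphi}_q$ wherever both are defined; shrinking $\widetilde{S}_q$ if necessary so that it is $\widetilde{f}_j$-invariant for each of the finitely many $j$ (possible since each $\widetilde{f}_j$ has finite order, hence admits invariant sectors around any direction — one can use Theorem \ref{difeoparab} in the tangent-to-identity case and the fact that finite-order germs are linearizable in general), we get $\widetilde{f}_j \in \mathrm{Inv}_{\widetilde{S}_q}(\widetilde{\varphi}_q)$. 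Since $\mathrm{Inv}_{\widetilde{S}_q}(\widetilde{\varphi}_q)$ is a group, it contains the subgroup generated by all the $\widetilde{f}_j$, i.e. $\mathrm{Inv}(\widetilde{f}_1,\dots,\widetilde{f}_n) \subset \mathrm{Inv}_{\widetilde{S}_q}(\widetilde{\varphi}_q)$.

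Finally, since $\widetilde{\varphi}_q$ is a moderate sectorial first integral — in particular non-constant (its asymptotic expansion $\widehat{\widetilde{\varphi}}_q$ is nonzero, transported from $\widehat{\varphi}$ via the holonomy and Dulac correspondences of Lemmas \ref{LemmaDulac} and \ref{LemmaTransHol}, which preserve nonvanishing of the formal series) and admitting a nonzero asymptotic expansion on $\widetilde{S}_q$ — Proposition \ref{INVfinite} applies and gives that $\mathrm{Inv}_{\widetilde{S}_q}(\widetilde{\varphi}_q)$ is finite. A subgroup of a finite group is finite, so $\mathrm{Inv}(\widetilde{f}_1,\dots,\widetilde{f}_n)$ is finite, proving the claim. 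The main obstacle I anticipate is the bookkeeping needed to arrange a \emph{single} sector $\widetilde{S}_q$ that is simultaneously invariant under all generators $\widetilde{f}_j$ and on which $\widetilde{\varphi}_q$ is defined and genuinely non-constant; once the common invariant sector is in place, the rest is a direct application of Proposition \ref{INVfinite}.
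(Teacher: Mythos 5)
Your overall route is the same as the paper's: realize $Inv(\widetilde{f}_1,\dots,\widetilde{f}_n)$ inside $Inv_{\widetilde{S}_q}(\widetilde{\varphi}_q)$ and apply Proposition \ref{INVfinite}. However, there is a genuine mis-step in how you identify the generators. In the paper, $\widetilde{f}_j$ is not a holonomy diffeomorphism: it is the holonomy extension to $\widetilde{\Sigma}_q$ of the local holomorphic first integral $\widetilde{F}_j$, i.e.\ a holomorphic function on the transverse section, and $Inv(\widetilde{f}_1,\dots,\widetilde{f}_n)$ is the subgroup of $Diff(\widetilde{\Sigma}_q,q)$ generated by the invariance groups of these functions, that is, by germs $g$ with $\widetilde{f}_j\circ g=\widetilde{f}_j$. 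Such a $g$ need not be a holonomy or virtual holonomy element, so your key step (``every generator preserves leaf traces, hence preserves $\widetilde{\varphi}_q$'') does not apply to the actual generators as stated; as written you prove finiteness of the group generated by certain holonomy maps, which is a different (and weaker) statement. Relatedly, your opening claim that each $\widetilde{f}_j$ ``has finite order'' is a category confusion --- a transported first integral has no order --- and is not needed.

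The repair is short and is exactly the paper's argument: since the $\widetilde{f}_j$ are extensions of local first integrals of the same foliation, the level sets of $\widetilde{f}_j$ in $\widetilde{\Sigma}_q$ are unions of traces of local leaves through the neighborhood $\widetilde{U}_j$, and $\widetilde{\varphi}_q$, being constant on leaf traces, is constant on each level of $\widetilde{f}_j$. Hence any $g$ with $\widetilde{f}_j\circ g=\widetilde{f}_j$ permutes points within each such level and therefore satisfies $\widetilde{\varphi}_q\circ g=\widetilde{\varphi}_q$ on the sector, giving $Inv(\widetilde{f}_1,\dots,\widetilde{f}_n)\subset Inv_{\widetilde{S}_q}(\widetilde{\varphi}_q)$, which is finite by Proposition \ref{INVfinite}. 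Your concern about arranging a single sector invariant under all generators is legitimate (the paper glosses over it), but it is bookkeeping rather than the missing ingredient; the missing ingredient is the passage from ``constant on leaves'' to ``constant on the levels of the $\widetilde{f}_j$'', which is what makes the containment hold for all elements of the invariance groups and not just for holonomy maps.
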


Once we have proved Claim \ref{ClaimMainGroup} we can just follow the main steps in \cite{mattei-moussu}(pg. 504) in order to prove the existence of a holomorphic first integral for $\widetilde{\fa}(1)$ defined in a neighborhood of $\mathbb{P}(1)$  in $\widetilde{\mathbb{C}}_{0}^{2}$ and then we can finish the proof exactly as in \cite{mattei-moussu}. Therefore, it remains to prove Claim \ref{ClaimMainGroup}. 

\begin{proof}[Proof of Claim \ref{ClaimMainGroup}]
By construction each $\widetilde{f}_i$ is an extension of the local first integral $\widetilde{F}_j$ coming from the same function $\widetilde{\varphi}$. Then $\widetilde{\varphi}$ is constant in the levels of  $\widetilde{f}_i$ and $ Inv(\widetilde{f}_{1}, \dots, \widetilde{f}_{n}) \subset Inv_{\widetilde{S}}(\widetilde{\varphi})$. By Proposition \ref{INVfinite} $Inv_{\widetilde{S}}(\widetilde{\varphi})$ is finite. 
\end{proof}
\end{proof}

\section{Strong separatrix and sectorial first integral} \label{SaddlenodeSection}
In this section we will establish some results on saddle-node and give an important lemma for the proof of Theorem \ref{Main2}. We start by defining the notion of asymptotic expansion in dimension two. 
\begin{Definition}{\rm(\cite{stolovitch}, \cite{gerard-sibuya}, \cite{Ramis2}, \cite{Wasow})  \label{DefAsymp2var}
Let $\widehat{F}(x,y) = \sum_{i+j = 0}^{\infty}a_{ij}x^{i}y^{j} \in \mathbb{C}[[x,y]]$ be a formal power series, $S \subset \mathbb{C}$ a sector and $\Omega \subset \mathbb{C}$ an open set centered at $0 \in \mathbb{C}$. We shall say that a function $F(x,y)$ holomorphic in $\Omega \times S$, \textit{admits $\widehat{F}$ as asymptotic expansion} in $\Omega \times S$, if given any proper subsector $S'$ of $S$, any compact disc $\Omega'$ of $\Omega$ and for each $k\in \mathbb{N}$ there exist a constant $A_{S',\Omega',k} = A_k > 0$ such that
\begin{eqnarray*} \label{asymptoticExpress}
\mid F(x,y) - \sum_{i+j = 0}^{k - 1}a_{ij}x^{i}y^{j} \mid  <  A_{k}\mid\mid (x,y) \mid\mid ^{k}, \qquad \forall (x,y)\in \Omega' \times S'.
\end{eqnarray*}
}
\end{Definition}

Now we consider $\omega = 0$ a germ of saddle-node in $(\mathbb{C}^2,0)$. It is known \cite{Ramis2} that there exist numbers $p,\lambda$ and a formal coordinate change such that $\omega$ is formally equivalent to  
$$ \omega_{p,\lambda} (x,y)=
\begin{cases}
\dot{x} = x(1 + \lambda y^{p}) \\
\dot{y} = y^{p+1}. \\
\end{cases}
$$   
Moreover, there exist a formal power series $\hat{\varphi}(x,y) = x + \sum_{j=1}^{\infty}\varphi_{j}(x)y^j$ with $\varphi_{j}$ converging in some disc $\mid x\mid < R$, ($R>0$ do not dependent on $j$), such that the formal difeomorphism $\hat{\Phi}(x,y) = (\hat{\varphi}(x,y),y)$ satisfy $\hat{\Phi}^{*} \omega_{p,\lambda} \wedge \omega = 0$. We will denote $D_{p,\lambda}$ the subset of the saddle-node $\omega$ such that $w_{p,\lambda}$ is the normal final formal associated to $\omega$. From now on in this section we will use this notation. Under this considerations we have the following important Theorem.

\begin{Theorem} [Hukuara-Kimura-Matuda \cite{hukuara-kimura-matuda}] \label{Hukuara-Kimura-Matuda}
Let $\omega = 0$ be a saddle-node in $D_{p,\lambda}$ and $S$ be a sector in $\mathbb{C}$. If $S$ admits at most open $\frac{2\pi}{p}$, there is a limited holomorphic transformation
\[
\Phi :   \Omega \times S   \rightarrow \mathbb{C} \times  S
\]  
with $\Omega \subset \mathbb{C}$ is a neighborhood of $0 \in \mathbb{C}$, such that
 \item[(i)] $\Phi(x,y) = (\varphi(x,y),y)$
 \item[(ii)] $\Phi^{*} \omega_{p,\lambda} \wedge \omega = 0$
 \item[(iii)] $\varphi$ admits $\hat{\varphi}$ as asymptotic expansion in $\Omega \times S$ (in the sense of Definition \ref{DefAsymp2var}).
\end{Theorem}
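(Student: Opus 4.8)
The statement we must establish is the Hukuhara--Kimura--Matuda theorem, asserting that a saddle-node of type $D_{p,\lambda}$ admits, on any sector $S$ of opening at most $\tfrac{2\pi}{p}$, an honest holomorphic sectorial normalization $\Phi(x,y)=(\varphi(x,y),y)$ that is asymptotic to the formal normalization $\hat\Phi$. The plan is to reduce the problem to a sectorial existence theorem for a single linear (or quasi-linear) partial differential equation satisfied by the normalizing transformation. First I would write the saddle-node as a vector field $X = y^{p+1}\partial_y + x(1+\lambda y^p + \text{h.o.t.})\,\partial_x + (\text{h.o.t.})\,\partial_x$ and look for $\varphi$ such that the change $u = \varphi(x,y)$, $y=y$ conjugates $X$ to $X_{p,\lambda} = y^{p+1}\partial_y + u(1+\lambda y^p)\partial_u$. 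Writing the conjugacy equation $X\cdot\varphi = u(1+\lambda y^p)\big|_{u=\varphi}$ out, one obtains a first-order PDE of the form
\[
y^{p+1}\frac{\partial \varphi}{\partial y} + a(x,y)\frac{\partial \varphi}{\partial x} = \varphi\,(1+\lambda y^p) + R(x,y,\varphi),
\]
where $a(x,y) = x(1+\lambda y^p)+\cdots$ and $R$ collects the higher order corrections. The unknown is $\varphi(x,y) = x + \sum_{j\ge 1}\varphi_j(x)y^j$, and we already know from the cited formal theory (\cite{Ramis2}) that a formal solution $\hat\varphi$ exists with the $\varphi_j$ holomorphic on a fixed disc $|x|<R$.

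The second step is to pass from formal to sectorial. Here I would invoke Borel--Ritt (Theorem \ref{Borel-Ritt}), in its two-variable/parametric form, to produce a holomorphic function $\varphi_0$ on $\Omega\times S$ admitting $\hat\varphi$ as asymptotic expansion; substituting $\varphi = \varphi_0 + \psi$ into the PDE turns the equation for $\psi$ into one with a right-hand side that is \emph{flat} (asymptotic to zero) on $S$, because $\varphi_0$ solves the equation to infinite order. So the real content is: the homological/linearized operator $L\psi = y^{p+1}\partial_y\psi + a\,\partial_x\psi - (1+\lambda y^p)\psi - (\partial_\varphi R)\psi$ has a right inverse sending flat functions on a sector of opening $\le 2\pi/p$ to flat functions, with good bounds. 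The natural way to build this right inverse is by integrating along the leaves of $X_{p,\lambda}$: the flow of $y^{p+1}\partial_y$ is $y\mapsto y/(1-p\,t\,y^p)^{1/p}$, so the ``time'' coordinate is $\tau = -\tfrac{1}{p\,y^p}$, on which translations act; the condition that $S$ has opening $\le 2\pi/p$ is exactly what guarantees that $\tau(S)$ is contained in a half-plane, so that one can integrate $e^{-\tau}$-type kernels from $\tau=\infty$ inside the sector and obtain convergent improper integrals with exponential gain. This is the classical path-integral construction underlying sectorial normalization of saddle-nodes.

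The third step is to verify that the Neumann-type iteration (or fixed point) built from this right inverse converges in a suitable Banach space of functions on $\Omega'\times S'$ with controlled growth — here one uses that the nonlinear remainder $R$ is of higher order, so that on a small enough polydisc the fixed-point map is a contraction — and then to check that the resulting $\psi$ is not merely bounded but genuinely flat, hence $\varphi = \varphi_0+\psi$ still admits $\hat\varphi$ as asymptotic expansion in the sense of Definition \ref{DefAsymp2var}. Properties (i) and (ii) are built into the ansatz; property (iii) is what the flatness of $\psi$ delivers. I expect the main obstacle to be precisely the sectorial estimates for the right inverse of $L$: one must choose the integration paths in the $\tau$-plane uniformly over $x\in\Omega'$, control the ``drift'' coming from the $a(x,y)\partial_x$ term (which mixes the $x$-variable as one integrates along leaves), and keep the exponential factor working in one's favour over the full opening $2\pi/p$ rather than a strictly smaller one — the borderline opening is where the kernel ceases to decay and the argument is tightest. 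Since this is a known theorem, I would cite \cite{hukuara-kimura-matuda} for the technical core and only sketch the path-integral mechanism and the role of the opening condition, rather than reproduce the estimates in full.
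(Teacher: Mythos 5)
The paper does not prove this statement at all: it is quoted verbatim as a classical theorem of Hukuhara--Kimura--Matuda and used as a black box (only the citation \cite{hukuara-kimura-matuda} is given, together with the surrounding references \cite{Ramis2}). Since you also end by deferring ``the technical core'' to the same citation, your proposal is, in effect, the same treatment as the paper's, preceded by a sketch of the standard strategy (homological equation, Borel--Ritt to reduce to a flat right-hand side, a right inverse built by integrating along the leaves, fixed point). That outline is the accepted route to such sectorial normalization theorems, so as a roadmap it is reasonable.

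One point in your sketch is, however, genuinely wrong and would matter if you tried to carry it out: you claim that the hypothesis that $S$ has opening at most $\frac{2\pi}{p}$ ``is exactly what guarantees that $\tau(S)$ is contained in a half-plane'' for $\tau=-\frac{1}{p\,y^{p}}$. The map $y\mapsto y^{p}$ multiplies openings by $p$, so a sector of opening $\frac{2\pi}{p}$ in $y$ has image of opening $2\pi$ in $\tau$; containment in a half-plane corresponds only to opening $\frac{\pi}{p}$. Hence the naive argument of integrating an $e^{-\tau}$-type kernel along rays inside a fixed half-plane proves the theorem only for openings up to $\frac{\pi}{p}$, and the borderline case $\frac{2\pi}{p}$ --- which is precisely what the statement asserts and what the paper needs (it takes $S$ with $-\frac{\pi}{p}<\arg y<\frac{\pi}{p}$, i.e.\ full opening $\frac{2\pi}{p}$) --- requires the more delicate choice of integration paths adapted to each direction in the sector, together with the corresponding uniform estimates; this is exactly the technical content of \cite{hukuara-kimura-matuda} that neither you nor the paper reproduces. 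A smaller remark: Theorem \ref{Borel-Ritt} as stated in the paper is one-variable, so the parametric (uniform in $x\in\Omega$) version you invoke would need to be stated, though that extension is standard.
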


\begin{Definition}{\rm
The holomorphic function $\Phi$ in Hukuara-Kimura-Matuda's theorem is called \textit{sectorial normalization}.
}
\end{Definition}
 
Let $\Gamma:\{y=0\}$ be the strong separatrix of $w_{p,\lambda}$ and $\Sigma :\{x=1\}$ be a transverse section to $\Gamma$. We will consider a sector 
$$S = \{y\in\Sigma\setminus \{(1,0)\}: Re(y)<0, -\frac{\pi}{p} < arg(y) < \frac{\pi}{p} \}.$$
Formally, we can do the following computations:
\begin{eqnarray*}
\frac{x(1 + \lambda y^{p})dy - y^{p+1}dx}{x^{p+1}x} = 0 \\
\Rightarrow \frac{dx}{x} - \left( \frac{1}{y^{p+1}} + \frac{\lambda}{y} \right) dy = 0 \\
\Rightarrow d \left( lnx - \lambda ln y + \frac{1}{p y^{p}} \right) = 0.
\end{eqnarray*}
Therefore, $F_{p,\lambda}(x,y) = x y^{-\lambda}e^{\frac{1}{p y^{p}}}$ is a first integral for $w_{p,\lambda}$ outside of $\Gamma$. Since the angle of $S$ is at most $\frac{2\pi}{p}$, by Hukuara-Kimura-Matuda theorem we have $\Phi$, $\varphi$ and $\Omega$ satisfying (i), (ii) and (iii) above. For simplicity of notation we may suppose $\Omega = \{x \in \mathbb{C} : |x| < 2\}$. Define $F: \Omega \times S\rightarrow \mathbb{C}$ given by $$F(x,y) = F_{p,\lambda} \circ \Phi (x,y).$$ 
By construction $F$ is constant in the traces of each leaf of $\omega=0$ in $S$.

\begin{Claim}
The asymptotic expansion of $F|_{\{x=1\} \times S}$ is zero in $S$.
\end{Claim}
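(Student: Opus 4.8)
The goal is to show that $F|_{\{x=1\}\times S}$, where $F = F_{p,\lambda}\circ\Phi$ and $F_{p,\lambda}(x,y)=xy^{-\lambda}e^{1/(py^p)}$, has zero asymptotic expansion on the sector $S=\{y:\ \mathrm{Re}(y)<0,\ -\pi/p<\arg(y)<\pi/p\}$. The structure mirrors the computation already carried out in Example \ref{ModerateNecessity}: the dominant factor is the exponential $e^{1/(py^p)}$, whose modulus is $e^{\mathrm{Re}(1/(py^p))}$, and on $S$ this real part is negative and blows up to $-\infty$ like $-c/|y|^p$. The plan is first to write $F(1,y)=\varphi(1,y)\,y^{-\lambda}e^{1/(py^p)}$ using property (i) of the sectorial normalization, then to estimate each of the three factors on a proper subsector $S'\subset S$ and a compact $\Omega'\ni 0$, and finally to invoke the uniqueness part of Theorem \ref{BusAsymp} to conclude the asymptotic series is identically zero.

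\textbf{Step 1: control of $\varphi$.} By part (iii) of Theorem \ref{Hukuara-Kimura-Matuda}, $\varphi$ is a \emph{bounded} holomorphic function on $\Omega\times S$ admitting $\hat\varphi$ as asymptotic expansion; in particular $|\varphi(1,y)|\le M$ for some constant $M>0$ and all $y\in S'$, for any proper subsector. So the factor $\varphi(1,y)$ contributes only a bounded multiplicative error and does not affect the decay.

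\textbf{Step 2: control of $y^{-\lambda}$.} On a proper subsector $S'=\{0<|y|<r,\ -\theta_0\le\arg(y)\le\theta_0\}$ with $\theta_0<\pi/p$, one has $|y^{-\lambda}|=|y|^{-\mathrm{Re}(\lambda)}e^{\mathrm{Im}(\lambda)\arg(y)}\le C\,|y|^{-\mathrm{Re}(\lambda)}$, a fixed negative (or at worst polynomial) power of $|y|$; crucially this grows at most polynomially as $y\to 0$, so it will be absorbed by the exponential decay in the next step.

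\textbf{Step 3: the exponential decay.} Write $y=\rho e^{i\theta}$ with $\rho<r$ and $|\theta|\le\theta_0<\pi/p$. Then
\[
\mathrm{Re}\!\left(\frac{1}{py^p}\right)=\frac{\cos(p\theta)}{p\,\rho^{p}}.
\]
Since $|p\theta|\le p\theta_0<\pi$, we would split into the relevant range: the sector $S$ was chosen precisely so that $\cos(p\theta)$ stays away from being positive on the part of $S$ where $\mathrm{Re}(y)<0$ forces $|\theta|\in(\pi/(2p),\pi/p)$, giving $\cos(p\theta)<0$ and hence $\mathrm{Re}(1/(py^p))\le -\delta/\rho^{p}$ for some $\delta>0$ on the relevant proper subsector. (The hypothesis $\mathrm{Re}(y)<0$ in the definition of $S$ is exactly what makes this sign work.) Consequently
\[
\bigl|F(1,y)\bigr|\ \le\ C'\,M\,|y|^{-\mathrm{Re}(\lambda)}\,e^{-\delta/|y|^{p}}.
\]
Since $e^{-\delta/t^{p}}$ decays faster than any power of $t$ as $t\to 0^+$, for every $k\in\mathbb{N}$ there is $A_k>0$ with $|F(1,y)|\le A_k|y|^{k}$ on $S'$. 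Thus $F|_{\{x=1\}\times S}$ admits the zero series as asymptotic expansion, and by the uniqueness statement (Theorem \ref{BusAsymp}(III)) this is \emph{the} asymptotic expansion, proving the Claim.

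\textbf{Main obstacle.} The only delicate point is Step 3: one must be careful that on all of $S$ (not just where $\mathrm{Re}(y)<0$ happens to be large) the quantity $\cos(p\theta)/\rho^p$ is bounded above by a negative constant over $\rho^p$ on each proper subsector, so that the estimate is uniform. This is precisely why $S$ is taken with the combined conditions $\mathrm{Re}(y)<0$ and $|\arg(y)|<\pi/p$; checking that these two conditions together force $\cos(p\arg(y))\le-c<0$ on proper subsectors (equivalently, that the opening of $S$ is genuinely less than the critical $2\pi/p$ once the $\mathrm{Re}<0$ cut is imposed) is the one computation that needs care, and everything else follows the template already displayed in Example \ref{ModerateNecessity}.
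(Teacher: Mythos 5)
Your proposal is correct and takes essentially the same route as the paper: it factors $F(1,y)=\varphi(1,y)\,y^{-\lambda}e^{1/(p y^{p})}$ through the sectorial normalization, controls $\varphi(1,y)$ (bounded, with asymptotic expansion $\hat\varphi$) and $y^{-\lambda}$ (at most polynomial growth), and uses that $\mathrm{Re}\bigl(1/(p y^{p})\bigr)\le -\delta/|y|^{p}$ on proper subsectors so that $|F(1,y)|\le A_k|y|^{k}$ for every $k$, concluding by uniqueness of the asymptotic expansion. Your Step 3 simply makes explicit the angle estimate that the paper leaves implicit in its appeal to $\mathrm{Re}(y)<0$.
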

\begin{proof}
By definition 
\begin{eqnarray*}
F|_{(\{x=1\} \times S)}(x,y) 
&=& F_{p,\lambda} \circ \Phi (1,y) \\
&=& F_{p,\lambda}(\varphi(1,y),y) \\
&=& \varphi(1,y)y^{-\lambda}e^{\frac{1}{p y^{p}}}.
\end{eqnarray*}
Then $F|_{(\{x=1\} \times S)}$ is holomorphic in $S$. Since $Re(y) < 0$ for all $y \in S$ then 
\begin{eqnarray*}
\mid y^{-\lambda}e^{\frac{1}{p y^{p}}}\mid &\leq & \mid y^{-\lambda}\mid \mid e^{\frac{1}{p y^{p}}}\mid \\ 
&=& \mid y^{-\lambda}\mid e^{Re\left(\frac{\bar{p y^{p}}}{\mid p y^{p}\mid^{2}}\right)}\\
&=& \mid y^{-\lambda}\mid e^{\frac{1}{p\mid y\mid^{2p}}Re\left({{y^{p}}}\right)}.
\end{eqnarray*}
Since $e^{\frac{1}{p\mid y\mid^{2p}}Re\left({{y^{p}}}\right)}\in \mathbb{R}$ and $Re(y)<0$, for any $k \in \mathbb{N}$ there is a constant $A_k >0$ such that  
\begin{eqnarray} \label{asympf1}
e^{\frac{1}{p\mid y\mid^{2p}}Re\left({{y^{p}}}\right)}&\leq & A_{k} \mid y \mid^k
\end{eqnarray}
with $y$ in a proper subsector $S'\subset S$. Then using item (iii) of Hukuara-Kimura-Matuda theorem and (\ref{asympf1}) we have
\begin{eqnarray*}
\mid F(1,y) \mid &\leq & \mid \varphi(1,y)y^{-\lambda}e^{\frac{1}{p y^{p}}} - 1 - \left(\sum_{j=1}^{k-1}\varphi_{j}(1)y^{j}\right)(y^{-\lambda}e^{\frac{1}{p y^{p}}})  +  1 + \left(\sum_{j=1}^{k-1}\varphi_{j}(1)y^{j}\right)(y^{-\lambda}e^{\frac{1}{p y^{p}}}) \mid  \\
&\leq & \left[  \mid \varphi(1,y) - 1 - \sum_{j=1}^{k-1}\varphi_{j}(1)y^{j}  \mid 
+ \mid  1 + \sum_{j=1}^{k-1}\varphi_{j}(1)y^{j} \mid \right] \mid  y^{-\lambda}e^{\frac{1}{p y^{p}}} \mid \\
&\leq & \left[   B_{k}\mid y \mid^{k} + C_{k}\mid y \mid^{k} \right] A_{k} \mid y \mid^k \\
&\leq & D_{k} \mid y \mid^{k}.
\end{eqnarray*}
\end{proof}
The above discussion allows us to enunciate the following lemma.
\begin{Lemma} \label{null asymptotic saddle-node}
Let $\fa$ be a germ of a saddle-node foliation at $0 \in \mathbb{C}^2$ and $\Gamma$ the strong separatrix of $\fa$. The pair $(\fa,\Gamma)$ admits sectorial first integral but not a moderate sectorial first integral.  
\end{Lemma}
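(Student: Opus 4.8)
The statement splits into two claims, and the substance is in the second. For the first --- that $(\fa,\Gamma)$ admits a sectorial first integral --- I would simply appeal to the construction carried out above: with $\Gamma=\{y=0\}$, $\Sigma=\{x=1\}$, Theorem \ref{Hukuara-Kimura-Matuda} supplies the sectorial normalization $\Phi(x,y)=(\varphi(x,y),y)$ on a sector $S$ of opening at most $2\pi/p$, and $\psi_{s}(y):=F_{p,\lambda}\circ\Phi(1,y)=\varphi(1,y)\,y^{-\lambda}e^{1/(py^{p})}$ is a non-constant holomorphic function on $S$ that is constant on the traces of the leaves of $\fa$; transporting along the leaves yields such a function on every transverse section. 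The Claim already proved shows that this particular $\psi_{s}$ has asymptotic expansion identically zero, but that does not by itself settle the second claim, which requires ruling out moderateness for \emph{every} sectorial first integral of $(\fa,\Gamma)$. To get that I would use the holonomy of $\Gamma$ together with Proposition \ref{INVfinite}.

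First I would record that the holonomy map $h\in\mathrm{Diff}(\Sigma,p)\cong\mathrm{Diff}(\mathbb{C},0)$ of the strong separatrix is a non-trivial parabolic germ. Indeed, since $\Gamma$ is the strong separatrix of a saddle-node, the transverse eigenvalue of the linear part of $\fa$ along $\Gamma$ is zero, so the holonomy multiplier is $1$ and $h$ is tangent to the identity; and $h\neq\mathrm{id}$, because in the formal model $\omega_{p,\lambda}$ a direct computation from $F_{p,\lambda}$ gives $h(y)=y+c\,y^{p+1}+\cdots$ with $c\ne0$, the holonomy of $\Gamma$ for $\fa$ being formally conjugate to this since $\fa$ is formally equivalent to $\omega_{p,\lambda}$ (equivalently, $h=\mathrm{id}$ would force $\fa$ to carry a holomorphic first integral near $\Gamma$, which no saddle-node has). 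Consequently $h^{n}(y)=y+nc\,y^{p+1}+\cdots\neq\mathrm{id}$ for all $n\ge1$, so $\langle h\rangle\subset\mathrm{Diff}(\mathbb{C},0)$ is infinite.

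Then I would argue by contradiction: if $(\fa,\Gamma)$ had a moderate sectorial first integral $\psi\colon S\to\mathbb{C}$ --- non-constant, constant on the traces of the leaves, with a nonzero asymptotic expansion $\hat\psi$ --- then, using the Leau--Fatou description of the parabolic germ $h$ (Theorem \ref{difeoparab} for $p=1$, the flower theorem in general) and the fact that near $\Gamma$ the $\fa$-invariant sectors are unions of attracting and repelling petals of $h$, I would pass to a sub-sector $S'\subseteq S$ lying in one petal, of small enough radius that $h(S')\subseteq S'$ (or $h^{-1}(S')\subseteq S'$ for a repelling petal). The restriction $\psi|_{S'}$ still admits $\hat\psi$ as asymptotic expansion and is still leaf-constant, and since $h(z)$ lies on the same leaf as $z$, $\psi\circ h=\psi$ on $S'$. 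Lemma \ref{FormalInvar} then gives $\hat\psi\circ h=\hat\psi$ as formal power series, i.e. $h\in Inv(\hat\psi)$; being a group, $Inv(\hat\psi)$ would contain the infinite group $\langle h\rangle$, contradicting Proposition \ref{INVfinite} (whose proof, via Lemma \ref{FormalChange}, identifies $Inv(\hat\psi)$ with a finite cyclic group). This would complete the proof. I expect the only genuinely delicate step to be the passage from the given $\fa$-invariant sector to an $h$- or $h^{-1}$-invariant sub-sector; the rest is a direct assembly of Theorem \ref{Hukuara-Kimura-Matuda}, Lemma \ref{FormalInvar} and Proposition \ref{INVfinite}.
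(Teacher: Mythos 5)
Your proposal is correct in substance but, for the second (and main) half of the statement, it takes a genuinely different route from the paper's. The paper's own argument is precisely the discussion preceding the lemma: it builds the sectorial first integral $F=F_{p,\lambda}\circ\Phi$ from the Hukuhara--Kimura--Matuda normalization (Theorem \ref{Hukuara-Kimura-Matuda}) and proves, via the flatness estimate on $y^{-\lambda}e^{1/(p y^{p})}$ for $Re(y)<0$, that $F|_{\{x=1\}\times S}$ has identically zero asymptotic expansion; the lemma is then ``enunciated'' from this computation, with no separate argument that \emph{every} sectorial first integral of the pair fails to be moderate. You keep the H--K--M construction only for the existence half and rule out moderateness dynamically: the holonomy $h$ of the strong separatrix is a nontrivial germ tangent to the identity, hence of infinite order, whereas a moderate sectorial first integral would place $h$ (or $h^{-1}$) in $Inv(\hat\psi)$ via Lemma \ref{FormalInvar}, and Proposition \ref{INVfinite} identifies that group with a subgroup of a finite cyclic group --- a contradiction. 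This is the same mechanism the paper itself uses in the non-degenerate case of Theorem \ref{Main}, transplanted to the saddle-node, and it has the merit of actually addressing the universal quantifier in ``does not admit a moderate sectorial first integral,'' which the paper's computation alone does not literally do; what the paper's route buys instead is the explicit flat first integral and the quantitative estimate, which is what Example \ref{ModerateNecessity} and the later applications invoke. Two caveats on your version: the nontriviality of $h$ (tangency to the identity of order exactly $p+1$) is best quoted from the Martinet--Ramis theory rather than deduced from the formal equivalence to $\omega_{p,\lambda}$ alone, since the normalizing series is divergent and formal equivalence of foliations does not by itself conjugate holonomies (your fallback argument via non-existence of holomorphic first integrals for saddle-nodes is fine); and the passage to an $h$-invariant subsector is cleanest if you read the definition's requirement that $S$ be ``invariant by $\mathcal{F}_U$'' as invariance under the holonomy returns, in which case no petal argument is needed --- a thin sector about a direction of $h$ that is neither attracting nor repelling contains no $h$- or $h^{-1}$-invariant subsector with vertex at the origin, so the flower theorem alone would not rescue an arbitrary sector. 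Finally, both your argument and the paper's are silent on the degenerate possibility that the asymptotic expansion is a nonzero constant (e.g.\ $1+F$ is leaf-constant with expansion $1$); this edge case is evidently meant to be excluded by the paper's definition of ``moderate,'' but it is worth flagging since Proposition \ref{INVfinite} does not apply to constant expansions.
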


The Lemma \ref{null asymptotic saddle-node} motivated the following definition. This notion is a central hypothesis in Theorem \ref{Main2}.
\begin{Definition} \rm{
A separatrix $\Gamma$ of $\mathcal{F}$ is called \textit{non-weak} if after the reduction of singularities of $\mathcal{F}$, every point of the exceptional divisor can be connected to $\Gamma$ by a sequence of projective lines starting at $\Gamma$ and such that every time we reach a corner, we arrive either on a separatrix of a non-degenerated singularity or on a strong (non-weak) manifold of a saddle-node. We refer to section §6 for further details on the resolution of singularities process.
}
\end{Definition}

\section{Proof of Theorem \ref{Main2}} \label{Proof of theorem2}
Following the above discussion we have.
\begin{proof}[\textbf{Proof of Theorem \ref{Main2}}]
Theorem \ref{Main2} is obtained similarly to Theorem \ref{Main} once we have proved the following proposition: 

\begin{Proposition} \label{PropTheoMain2}
Under the hypotheses of Theorem \ref{Main2}, there are no saddle-nodes in the reduction of singularities of $\fa$.
\end{Proposition}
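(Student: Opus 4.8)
The plan is to argue by contradiction: suppose the reduction of singularities $\widetilde{\fa}$ of $\fa$ contains a saddle-node singularity, and derive a contradiction with the existence of a moderate sectorial first integral along the non-weak separatrix $\Gamma$. The key structural fact I would use is the following. Let $q$ be a saddle-node appearing in the reduction, with strong separatrix $\Gamma_s$ and weak separatrix $\Gamma_w$. By the definition of non-weak separatrix, every point of the exceptional divisor — in particular the point $q$ — can be connected to $\Gamma$ by a chain of projective lines such that, whenever we cross a corner, we arrive on a separatrix of a non-degenerate singularity or on a \emph{strong} manifold of a saddle-node. So the chain that reaches $q$ arrives on the strong separatrix $\Gamma_s$ of $q$, never on $\Gamma_w$.

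First I would transport the moderate sectorial first integral of the pair $(\fa,\Gamma)$ along this chain. Concretely: after the reduction, $\Gamma$ becomes a separatrix $\widetilde{\Gamma}$ of $\widetilde{\fa}$ through some component $D_{j_0}$, and the pull-back $\widetilde{\varphi}=\varphi\circ\sigma$ is a moderate sectorial first integral for $(\widetilde{\fa},\widetilde{\Gamma})$. Using Lemma \ref{LemmaTransHol} (transport by holonomy) to move along each projective line, and Lemma \ref{LemmaDulac} (Dulac correspondence) to pass through each corner, exactly as in the proof of Proposition \ref{NextEnough}, I obtain at a non-singular point $t$ of the component $D_\ell$ containing the strong separatrix $\Gamma_s$ of $q$ — with $t$ chosen close to the corner $q$ — a sector $S_t$ in a transverse section $\Sigma_t$ and a holomorphic function $\varphi_t:S_t\to\mathbb{C}$ which is constant on the traces of the leaves and admits a \emph{nonzero} asymptotic expansion. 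For the Dulac steps one needs each corner crossed before reaching $q$ to be a non-degenerate singularity with a holomorphic first integral (normal form $nxdy+mydx=0$), which is exactly what the non-weak hypothesis supplies via Theorem \ref{Theorem:analyticconjugation}; here one also uses Proposition \ref{INVfinite} to see the relevant holonomies are finite, hence linearizable.

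Now I bring the transported first integral into the saddle-node $q$ itself. By the Hukuara–Kimura–Matuda theorem (Theorem \ref{Hukuara-Kimura-Matuda}) there is a sectorial normalization $\Phi(x,y)=(\varphi(x,y),y)$ conjugating $\fa$ near $q$ to $\omega_{p,\lambda}$ on $\Omega\times S$ for a sector $S$ of opening at most $2\pi/p$ about the strong separatrix $\{y=0\}$, and $F_{p,\lambda}=xy^{-\lambda}e^{1/(py^p)}$ is a first integral outside $\Gamma_s$. The transported object $\varphi_t$ lives on a transverse section to $\Gamma_s$ near $q$; expressing it through the normalization and the first integral $F=F_{p,\lambda}\circ\Phi$, the computation carried out just before Lemma \ref{null asymptotic saddle-node} shows — precisely because $Re(y)<0$ on $S$ — that the asymptotic expansion of the restriction of $F$ to $\{x=1\}\times S$ is \emph{zero}. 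Since $\varphi_t$ and $F|_{\{x=1\}\times S}$ are both first integrals on overlapping sectors along the same leaves near $\Gamma_s$, they differ by composition with a holomorphic function, and by the functorial properties of asymptotic expansions in Theorem \ref{BusAsymp} (and Lemma \ref{FormalInvar}) the nonzero asymptotic expansion of $\varphi_t$ would have to be carried over to a nonzero asymptotic expansion of $F|_{\{x=1\}\times S}$ — contradicting Lemma \ref{null asymptotic saddle-node}. This contradiction shows no saddle-node can occur, proving the proposition.

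The main obstacle I anticipate is the gluing step at the saddle-node: matching the transversal section $\Sigma_t$ coming from the transport process with the transversal $\{x=1\}$ used in the Hukuara–Kimura–Matuda normalization, and checking that the leaves seen from $S_t$ really are the leaves on which $F$ is the sectorial first integral with null expansion — i.e., that the two sectorial first integrals are comparable and one determines the asymptotic type of the other. One must be careful that the sector $S_t$ can be chosen inside the half-plane $Re(y)<0$ dictated by Theorem \ref{Hukuara-Kimura-Matuda}, i.e., that we approach $q$ along the correct (strong) side; this is guaranteed by the non-weak hypothesis, which forbids arriving on the weak separatrix, but the bookkeeping of which sector of which transverse section is being used needs care. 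The rest is a routine assembly of Lemmas \ref{LemmaDulac}, \ref{LemmaTransHol}, Proposition \ref{INVfinite}, and Lemma \ref{null asymptotic saddle-node}.
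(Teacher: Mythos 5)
Your proposal is correct and follows essentially the same route as the paper: the paper also argues by contradiction, transports the moderate sectorial first integral to a point on the strong separatrix near the alleged saddle-node via Proposition \ref{NextEnough} (i.e.\ Lemmas \ref{LemmaDulac} and \ref{LemmaTransHol}, with Proposition \ref{INVfinite} and Theorem \ref{Theorem:analyticconjugation} at the corners), and then contradicts Lemma \ref{null asymptotic saddle-node}. Your extra discussion of the Hukuara--Kimura--Matuda normalization is just an unpacking of the proof of that lemma, so no genuinely new ingredient is involved.
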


\begin{proof}
Let $\widetilde{\fa} = \pi^{*}(\fa)$ be the reduction of singularities of $\fa$ and consider the exceptional divisor $D = \bigcup_{j=1}^{m} D_j$ where each $D_j$ is an irreducible component and it is diffeomorphic to an embedded projective line introduced as a divisor of the successive blowing up's. Assume that there is a saddle-node $q \in D_r$ for some $r \in\{1,\dots m\}$. Let $\widetilde{\Gamma} = \pi^{-1}(\Gamma)$ be a non-weak separatrix of $\widetilde{\fa}$. Since the pair $(\fa,\Gamma)$ admits a moderate sectorial first integral, the same holds to $(\widetilde{\fa},\widetilde{\Gamma})$. By hypothesis, every time we reach a corner $q \in \widetilde{\Gamma} \cap D_r$, we arrive through a strong in a strong manifold. Then, by Lemma \ref{null asymptotic saddle-node}, $\widetilde{\Gamma}$ can not be the strong separatrix of $q$. This implies that $q \not\in \widetilde{\Gamma}$. Consider the case where the strong separatrix of $q$ is contained in $D_r$. By hypothesis, every point of the exceptional divisor can be connected to $\widetilde{\Gamma}$ by a sequence of projective lines starting at $\widetilde{\Gamma}$. Thus, by Proposition \ref{NextEnough}, for every non-singular point $p \in D_r \setminus sing(\widetilde{\fa})$ and any transverse section $\Sigma_p \ni p$, there is a sector $S_p \subset \Sigma_p$ with vertex at $p$ and a moderate sectorial first integral $\varphi_p :S_p \rightarrow \mathbb{C}$. We can choose $p$ close enough to the point $q$ such that $p$ belongs to the strong separatrix of $q$, which is a contradiction by Lemma \ref{null asymptotic saddle-node}. 
\end{proof}
\end{proof}

\section{Proof of Theorem \ref{Main3}} \label{Section:GenCurve}
In this section we will show that if we consider all separatrices then the generalized curve hypothesis can be removed. For the convenience of the reader, we will define the Camacho-Sad Index \cite{Camacho-sad inv}.

Let $M$ be a complex surface. Consider an isolated singularity $q \in M$ of a foliation $\fa$ near $q$ induced by a holomorphic 1-form $\omega$. Suppose we have a separatrix $\Gamma$ of $\fa$ through $q$ and let $\phi: (\mathbb{C}^2,0) \rightarrow (\fa, q)$, be a local chart such that $\{\phi(x,0): x \in \mathbb{C}\} \subset \Gamma$. Then
$
(\phi^{*}\omega)(x,y) = A(x, y) dx + B(x, y) dy
$, where $A(0,0) = B(0,0) = 0$ and $A(x,0) = 0$.

\begin{Definition}{\rm (\cite{Camacho-sad inv} pg. 585)
We define the {\it index} of $\fa$ relative to $\Gamma$ at $q \in \Gamma$ by
\[
Ind_{q}(\fa,\Gamma)= - \Res_{x=0} \frac{\partial }{\partial y}\left(\frac{A}{B}\right)(x,0).
\]
}
\end{Definition}
\begin{Example}
{\rm
Let $q$ be an irreducible isolated singularity of a foliation $\fa$ induced by a holomorphic 1-form $\omega(x,y) = (\lambda_1 x + \dots)dy - (\lambda_2  y + \dots)dx$, with $\lambda_1, \lambda_2 \neq 0$. Fix $\Gamma_1,\Gamma_2$ the separatrices of $\fa$ in $q\in \Gamma_1,\Gamma_2$ tangent to the eigen-spaces of $\lambda_1$ and $\lambda_2$, respectively. Then  $Ind_{q}(\fa,\Gamma_1) = \frac{\lambda_2}{\lambda_1}$ and $Ind_{q}(\fa,\Gamma_2) = \frac{\lambda_1}{\lambda_2}$.
}
\end{Example}
\begin{Example}
{\rm
Let $p$ be a saddle-node isolated singularity of a foliation $\fa$ induced by a holomorphic 1-form $\eta(x,y) = \lambda_1 x + A(x,y)dy - y^{p+1}dx$, where $\lambda_1\neq 0$, and $A(x,y)$ with multiplicity $\geq 2$ in $(0,0)$. Fix $\Gamma:\{y=0\}$ the strong separatrix of $\fa$ in $p \in \Gamma$. Then  
\begin{eqnarray*}
Ind_{p}(\fa,\Gamma) = 0.
\end{eqnarray*}
}
\end{Example}

The \textit{Camacho-Sad Index Theorem} (Camacho-Sad \cite{Camacho-sad inv}, pg. 579) states that the sum of indexes of a foliation $\fa$  at all the
singularities in a compact analytic smooth invariant curve $\Ga$ on
a complex surface $M$ is equal to the self-intersection (first
Chern class) of $\Ga$ in $M$. In particular,
\[
\sum\limits_{p\in\sing(\fa)\cap\Ga}\,\,Ind_p(\fa,\Ga) = \Ga\cdot\Ga
\in \bz.
\]

Let $\pi: M_q \rightarrow M$ be the blowing up of a complex surface $M$ at a point $q \in M$ and let $\fa(1)$ be the foliation defined near $\mathbb{P}_q = \pi^{-1}(q)$ induced by $\pi^{*}\fa$. Denote $\widetilde{\Gamma} = \pi^{-1}(\Gamma - \{q\})$ and $\{m\} = \widetilde{\Gamma} \cap \mathbb{P}_q$. Then (\cite{Camacho-sad inv} pg. 585, Proposition 2.1)
\begin{eqnarray} \label{resiprop1}
Ind_{m}(\fa(1),\widetilde{\Gamma}) = Ind_{q}(\fa,\Gamma) -1,
\end{eqnarray}
and (\cite{Camacho-sad inv} pg. 586, Proposition 2.2)
\begin{eqnarray} \label{resiprop2}
\sum_{j=1}^{k}Ind_{q_{j}}(\fa(1),\mathbb{P}_q) = -1,
\end{eqnarray}
 where $q_{j} \in sing(\mathbb{P}_q)$ for all $j \in \{1,\dots k\}$.

Next, we present the proof of Theorem \ref{Main3}.  
\begin{proof}[\textbf{Proof of Teorem \ref{Main3}}]
We will construct our argumentation based on the number $r$ of blow-ups of the resolution of $\fa$.
\\
\\
\noindent \textbf{Case $r=0$:}\\
\emph{Saddle-node case:} 
By hypothesis, for every separatrix $\Gamma$ of $\fa$ the pair $(\fa,\Gamma)$ admits a moderate sectorial first integral. In particular, the same holds for the strong separatrix of the saddle-node, which is a contradiction by Lemma \ref{null asymptotic saddle-node}. Therefore, this case does not occur.
\\
\\
The \emph{non-degenerated case} follows as in Theorem \ref{Main}.
\\
\\
Assume that the result holds for foliations  that can be reduced with $r$ blow-ups. Let us give an insight of the general case by studying some additional cases. First we shall make an important remark. We perform the blow up $\pi_{r} : \widetilde{\mathbb{C}}_{0}^{2} \rightarrow \mathbb{C}^2$ obtaining a foliation $\fa(r) = \pi_1^{*}(\fa)$. Let $D = \bigcup_{j=1}^{r}\mathbb{P}_j$ be the  irreducible smooth components. 
\begin{Claim} \label{ClaimT31}
If there is a saddle-node $q \in \mathbb{P}_j$ in $\fa(r)$ then the strong separatrix $\widetilde\Gamma$ through $q$ is contained in $\mathbb{P}_j$.
\end{Claim}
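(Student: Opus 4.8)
The plan is to argue by contradiction. Suppose the strong separatrix $\widetilde{\Gamma}$ through the saddle-node $q\in\mathbb{P}_j$ is \emph{not} contained in $\mathbb{P}_j$. Then one of two things happens: either $\widetilde{\Gamma}$ is not contained in the exceptional divisor, in which case $\widetilde{\Gamma}=\overline{\pi^{-1}(\Gamma\setminus\{0\})}$ is the strict transform of a genuine separatrix $\Gamma$ of the germ $\fa$ at $0\in\mathbb{C}^2$; or $\widetilde{\Gamma}$ lies in the divisor, hence equals an irreducible component $\mathbb{P}_i$ with $i\neq j$, so that $q=\mathbb{P}_i\cap\mathbb{P}_j$ is a corner. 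I will derive a contradiction in each case; since $\widetilde{\Gamma}$ and $\mathbb{P}_j$ are smooth curves through $q$, the negation then forces $\widetilde{\Gamma}=\mathbb{P}_j$.

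In the first case I would invoke the hypothesis of Theorem \ref{Main3}: the pair $(\fa,\Gamma)$ admits a moderate sectorial first integral $\varphi\colon S\to\mathbb{C}$ on a sector $S$ of a transverse section $\Sigma$ with vertex at $p\in\Sigma\cap\Gamma$, and $p$ may be chosen arbitrarily close to $0$. Exactly as in the proof of Claim \ref{ClaimA}, and since $\pi$ is a biholomorphism off the divisor, the pulled-back function $\widetilde{\varphi}=\varphi\circ\pi$ is holomorphic on the sector $\widetilde{S}=\pi^{-1}(S)$ of the transverse section $\widetilde{\Sigma}=\pi^{-1}(\Sigma)$, is constant on the traces of the leaves of $\fa(r)$, and by the composition rule of Theorem \ref{BusAsymp} admits the asymptotic expansion $\hat{\varphi}\circ\hat{\pi}$, which is nonzero because $\hat{\varphi}\neq 0$ and $\hat{\pi}$ is invertible. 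As $p\to 0$ the vertex $\pi^{-1}(p)$ runs along $\widetilde{\Gamma}$ toward $q$, so the germ at $q$ of $(\fa(r),\widetilde{\Gamma})$ — a saddle-node together with its strong separatrix — admits a moderate sectorial first integral, contradicting Lemma \ref{null asymptotic saddle-node}. In the second (corner) case I would instead transport a moderate sectorial first integral along $\mathbb{P}_i$ toward $q$ using Lemmas \ref{LemmaTransHol} and \ref{LemmaDulac} and Proposition \ref{NextEnough}, obtaining one at a non-singular point of $\mathbb{P}_i=\widetilde{\Gamma}$ as close to $q$ as desired, i.e.\ on the strong separatrix of $q$; Lemma \ref{null asymptotic saddle-node} again gives a contradiction, exactly as in the proof of Proposition \ref{PropTheoMain2}.

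The two technical points involved — that the blow-down map has a nonzero (indeed invertible) asymptotic expansion in the transverse coordinate, and that Theorem \ref{BusAsymp} permits composition of asymptotic expansions — are routine. The step I expect to need the most care is the matching of \textbf{conventions at the saddle-node}: one must be sure that the separatrix called ``strong'' here is exactly the one along which Lemma \ref{null asymptotic saddle-node} (equivalently the computation in Example \ref{ModerateNecessity}) forces the sectorial first integral to have a \emph{vanishing} asymptotic expansion on the relevant sector, so that the clash with a \emph{moderate} sectorial first integral is genuine — this is precisely the weak/strong dichotomy isolated in Lemma \ref{null asymptotic saddle-node}.
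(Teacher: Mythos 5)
Your first case --- $\widetilde{\Gamma}$ not contained in the exceptional divisor --- is precisely the paper's proof of Claim \ref{ClaimT31}: if the strong separatrix of the saddle-node at $q$ were transverse to the divisor it would be the strict transform of a separatrix $\Gamma$ of $\fa$, the moderate sectorial first integral for $(\fa,\Gamma)$ provided by the hypothesis of Theorem \ref{Main3} corresponds, through the biholomorphism $\pi$ off the divisor, to one on sections near $q$ along the strong separatrix, and this contradicts Lemma \ref{null asymptotic saddle-node}. Up to the direction of transport (you lift, the paper pushes down), this is the same argument.

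The gap is in your second case. First, it is not what the claim means as the paper uses it: in the subsequent case analysis the paper explicitly allows the strong separatrix of a corner saddle-node to lie in either of the two components meeting at the corner (for instance in case $r=2$, item ii, it concludes only that $S\setminus\{q\}\subset \mathbb{P}_1\setminus\mathbb{P}_2$ or $S\setminus\{q\}\subset \mathbb{P}_2\setminus\mathbb{P}_1$). So the claim is really ``the strong separatrix is contained in the exceptional divisor'', i.e.\ not transverse to it; read literally for every $j$, your stronger conclusion would forbid corner saddle-nodes altogether, which is exactly what the rest of the proof of Theorem \ref{Main3} labors to establish via the Camacho--Sad index theorem, and cannot be obtained for free at this point. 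Second, the argument you offer for that case is circular: transporting a moderate sectorial first integral along a chain of projective lines to a point of $\mathbb{P}_i$ near $q$ by Lemma \ref{LemmaDulac}, Lemma \ref{LemmaTransHol} and Proposition \ref{NextEnough} requires every corner crossed along the chain to be an irreducible singularity admitting the normal form $nx\,dy+my\,dx=0$ (a holomorphic first integral at the corner), hence in particular not a saddle-node; but $q$ itself is a saddle-node corner, and at this stage nothing excludes further saddle-node corners obstructing every such chain. The appeal to the proof of Proposition \ref{PropTheoMain2} does not repair this, because there the non-weak hypothesis of Theorem \ref{Main2} is precisely what guarantees that corners are reached through strong manifolds or non-degenerate singularities, and Theorem \ref{Main3} carries no such hypothesis. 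The fix is simply to keep your first case (which is the entire content of the paper's proof) and to weaken the conclusion at corners to containment in the divisor.
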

\begin{proof} [Proof of Claim \ref{ClaimT31}]
Suppose by contradiction that the strong separatrix $\widetilde\Gamma$ through $q$ is transverse to $\mathbb{P}_1$. by Lemma \ref{null asymptotic saddle-node}, the strong separatrix of a saddle-node does not admit a moderate sectorial first integral. Then $\Gamma = \pi_{r}^{-1}(\widetilde\Gamma)$ is a separatrix to $\fa$, such that the pair $(\fa,\Gamma)$ does not admit a moderate sectorial first integral. However, by hypothesis, every par $(\fa,\cdot)$ admits a moderate sectorial first integral. This is a contradiction.  
\end{proof}

\noindent \textbf{Case $r = 1$:}\\
Suppose that $\fa$ is reduced with $r = 1$ blow-ups. We perform the blow up $\pi_{1} : \widetilde{\mathbb{C}}_{0}^{2} \rightarrow \mathbb{C}^2$ obtaining a foliation $\fa(1) = \pi_1^{*}(\fa)$. Let $\mathbb{P}_1$ be the  irreducible smooth component. 

Suppose by contradiction that there is a saddle-node $q$ in $\mathbb{P}_1$: 

\noindent Let $S$ be the strong separatrix trough $q$. By Claim \ref{ClaimT31} $S \subset \mathbb{P}_1$. By Camacho-Sad Index Theorem, $Ind_{q}(\fa(1),S) = 0$ and $\sum_{j=1}^{k}Ind_{q_j}(\fa(1),\mathbb{P}_1) = -1$. Hence, there is at least one singularity $q_{\alpha}\in \mathbb{P}_1 \cap sing(\fa(1))$ and a separatrix $\Gamma_{\alpha} \ni q_{\alpha}$ such that $\Gamma_{\alpha}$ is transverse to $\mathbb{P}_1$ and $q_\alpha$ is not a saddle-node. By hypothesis, the pair $(\fa(1),\Gamma_{\alpha})$ admits a moderate sectorial first integral. By Proposition \ref{NextEnough} for every non-singular point $p \in \mathbb{P}_1 \setminus sing(\fa(1))$ and any transverse section $\Sigma_p \ni p$, there is a sector $S_p \subset \Sigma_p$ with vertex at $p$ and a moderate sectorial first integral $\varphi_p :S_p \rightarrow \mathbb{C}$. We can choose $p$ close enough to saddle-node $q$ such that $p \in S$. By Lemma \ref{null asymptotic saddle-node} this is a contradiction. Therefore, there are no saddles in this case.
\\
\\
\noindent \textbf{Case $r = 2$:}\\
Suppose that $\fa$ is reduced with $r = 2$ blow-ups. We perform the blow up $\pi_2 : \widetilde{\mathbb{C}}_{0}^{2} \rightarrow \mathbb{C}^2$ obtaining a foliation $\fa(2) = \pi_2^{*}(\fa)$. Let $D = \bigcup_{j=1}^{2}\mathbb{P}_j$ be the  irreducible smooth components of $\fa(2)$. Suppose by contradiction that there is a saddle-node $q \in D$. Let $S$ be the strong separatrix trough $q$. We have two possibilities:
\\
\\
i) \textit{$q \notin \mathbb{P}_1 \cap \mathbb{P}_2$}: By Claim \ref{ClaimT31} $S \subset \mathbb{P}_1$ or $S \subset \mathbb{P}_2$. If $S \subset \mathbb{P}_1$ we proceed as in case $r = 1$. If $S \subset \mathbb{P}_2$ we observe that $Ind_{q}(\fa(2),S) = 0$ and $\sum_{j=1}^{k}Ind_{q_j}(\fa(2),\mathbb{P}_2) = -2$. Hence, there is at least one singularity $q_{\alpha}\in \mathbb{P}_2 \cap sing(\fa(2))$ and a separatrix $\Gamma_{\alpha} \ni q_{\alpha}$ such that $\Gamma_{\alpha}$ is transverse to $\mathbb{P}_2$ and $q_\alpha$ is not a saddle-node. By hypothesis, the pair $(\fa(1),\Gamma_{\alpha})$ admits a moderate sectorial first integral. By Proposition \ref{NextEnough} for every non-singular point $p \in \mathbb{P}_1 \setminus sing(\fa(1))$ and any transverse section $\Sigma_p \ni p$, there is a sector $S_p \subset \Sigma_p$ with vertex at $p$ and a moderate sectorial first integral $\varphi_p :S_p \rightarrow \mathbb{C}$. We can choose $p$ close enough to saddle-node $q$ such that $p \in S$. By Lemma \ref{null asymptotic saddle-node} this is a contradiction. Therefore, there are no saddles in this case.
\\
\\
ii) \textit{$q \in \mathbb{P}_1 \cap \mathbb{P}_2$}: By Claim \ref{ClaimT31} $S \setminus \{q\} \subset \mathbb{P}_1 - \mathbb{P}_2$ or $S \setminus  \{q\} \subset \mathbb{P}_2 - \mathbb{P}_1$. Following as previous case we prove that there is no saddle-node when $r=2$.
\\
\\
\noindent \textbf{Case $r = 3$:}\\
Suppose that $\fa$ is reduced with $r = 3$ blow-ups. We perform the blow up $\pi : \widetilde{\mathbb{C}}_{0}^{2} \rightarrow \mathbb{C}^2$ obtaining a foliation $\fa(3) = \pi^{*}(\fa)$. Let $D = \bigcup_{j=1}^{3}\mathbb{P}_j$ be the  irreducible smooth components such that $\mathbb{P}_1 \cap \mathbb{P}_3 \neq \emptyset$ and $\mathbb{P}_2 \cap \mathbb{P}_3 \neq \emptyset$. Then we have three possibilities:\\
\\
i) \textit{There are no saddle-nodes on the corners}: Let $q_k \in \mathbb{P}_k$ be a saddle-node in some irreducible component $\mathbb{P}_k$ and $S_k$ the strong separatrix through $q_k$. It follows from Claim \ref{ClaimT31} that $S_k \subset \mathbb{P}_k$. Let $\Gamma$ be the separatrix of Camacho-Sad \cite{Ca-Sad-LN generalizedCurve}. By hypothesis, the pair $(\fa(3), \Gamma)$ admits a moderate sectorial first integral. Since $q_k$ is not a corner then $q_k \not\in \Gamma \cap \mathbb{P}_k$. By Proposition \ref{NextEnough} for every non-singular point $p \in \mathbb{P}_k \setminus sing(\fa(3))$ and any transverse section $\Sigma_p \ni p$, there is a sector $S_p \subset \Sigma_p$ with vertex at $p$ and a moderate sectorial first integral $\varphi_p :S_p \rightarrow \mathbb{C}$. We can choose $p$ close enough to saddle-node $q_k$ such that $p \in S_k$. By Lemma \ref{null asymptotic saddle-node} this is a contradiction. Therefore, there are no saddle-nodes in this case.
\\
\\
ii) \textit{Only one corner has a saddle-node}: Without loss of generality, we will suppose that the saddle-node $q \in \mathbb{P}_1 \cap \mathbb{P}_3$. By Claim \ref{ClaimT31} the strong separatrix $S \ni q$ is contained in $\mathbb{P}_1$ or $\mathbb{P}_3$. Let $\Gamma$ be the separatrix of Camacho-Sad \cite{Ca-Sad-LN generalizedCurve}. If $S \subset \mathbb{P}_1$ we use the local coordinates of $\mathbb{P}_1$ for $\Gamma$. By hypothesis, the pair $(\fa(3), \Gamma)$ admits a moderate sectorial first integral. Since the corner $\mathbb{P}_2 \cap \mathbb{P}_3$ is not a saddle-node it follows from Proposition \ref{NextEnough} that for every non-singular point $p \in \mathbb{P}_1  \setminus sing(\fa(3))$ and any transverse section $\Sigma_p \ni p$, there is a sector $S_p \subset \Sigma_p$ with vertex at $p$ and a moderate sectorial first integral $\varphi_p :S_p \rightarrow \mathbb{C}$. We can choose $p$ close enough to saddle-node $q$ such that $p \in S$. By Lemma \ref{null asymptotic saddle-node} this is a contradiction. If $S \subset \mathbb{P}_3$ we use the local coordinates of $\mathbb{P}_2$ for $\Gamma$. Since the corner $\mathbb{P}_2 \cap \mathbb{P}_3$ is not a saddle-node we can use the Proposition \ref{NextEnough} and Lemma \ref{null asymptotic saddle-node} we have a contradiction. Therefore, there are no saddle-nodes in this case.
\\
\\
iii) \textit{The two corners have saddle-nodes}: Let $q_1 \in \mathbb{P}_1 \cap \mathbb{P}_3$ and $q_2 \in \mathbb{P}_2 \cap \mathbb{P}_3$ be the saddle-nodes and $S_j$ be the strong separatrix trough $q_j$ with $j \in \{1,2\}$. By Claim \ref{ClaimT31} $S_1 \subset \mathbb{P}_3$ or $S_1 \subset \mathbb{P}_1$ and $S_2 \subset \mathbb{P}_2$ or $S_2 \subset \mathbb{P}_3$. Suppose that  $S_1,S_2 \subset \mathbb{P}_3$. Then  $Ind_{q_j}(\fa(3),S_j) = 0$ for $j =1,2$. Furthermore, $\sum_{j=1}^{k}Ind_{q_j}(\fa(3),\mathbb{P}_3) = -3$, where $q_j \in \mathbb{P}_3 \cap sing(\fa(3))$. Hence, there is at least one singularity $q_{\alpha}\in \mathbb{P}_3 \cap sing(\fa(3))$ and a separatrix $\Gamma_{\alpha} \ni q_{\alpha}$ such that $\Gamma_{\alpha}$ is transverse to $\mathbb{P}_3$ and $q_\alpha$ is not a saddle-node. By hypothesis, the pair $(\fa(3),\Gamma_{\alpha})$ admits a moderate sectorial first integral. By Proposition \ref{NextEnough}, for every non-singular point $p \in \mathbb{P}_1 \setminus sing(\fa(1))$ and any transverse section $\Sigma_p \ni p$, there is a sector $S_p \subset \Sigma_p$ with vertex at $p$ and a moderate sectorial first integral $\varphi_p :S_p \rightarrow \mathbb{C}$. For $j =1,2 $ we can choose $p$ close enough to saddle-node $q_j$ such that $p \in S_j$. By Lemma \ref{null asymptotic saddle-node} this is a contradiction. If $S_1 \subset \mathbb{P}_1$ or $S_2 \subset \mathbb{P}_2$ we can apply Proposition \ref{NextEnough} and Lemma \ref{null asymptotic saddle-node} we have a contradiction (as in the first arguments of the case (ii) above). Therefore, there are no saddle-nodes in this case.
\\
\\
\noindent \textbf{Case $r =4$:}\\
Suppose that $\fa$ is reduced with $r = 4$ blow-ups. We perform the blow up $\pi : \widetilde{\mathbb{C}}_{0}^{2} \rightarrow \mathbb{C}^2$ obtaining a foliation $\fa(4) = \pi^{*}(\fa)$. Let $D = \bigcup_{j=1}^{4}\mathbb{P}_j$ be the irreducible smooth components. We have the following possibilities:
\\
\\
i) \textit{There are no saddle-nodes on the corners}: Repeat the arguments of (i) case $r=3$.
\\
\\
ii) \textit{Only one corner has a saddle-node}: Repeat the arguments of (ii) of case $r=3$.
\\
\\
iii) \textit{The two corners of the same projective line are saddle-nodes}: Repeat the arguments of (iii) case $r=3$.
\\
\\
iv) \textit{Only one corner of each projetctive line is a saddle-node}:
\begin{figure}[h]
\centering 
\includegraphics[width=7cm]{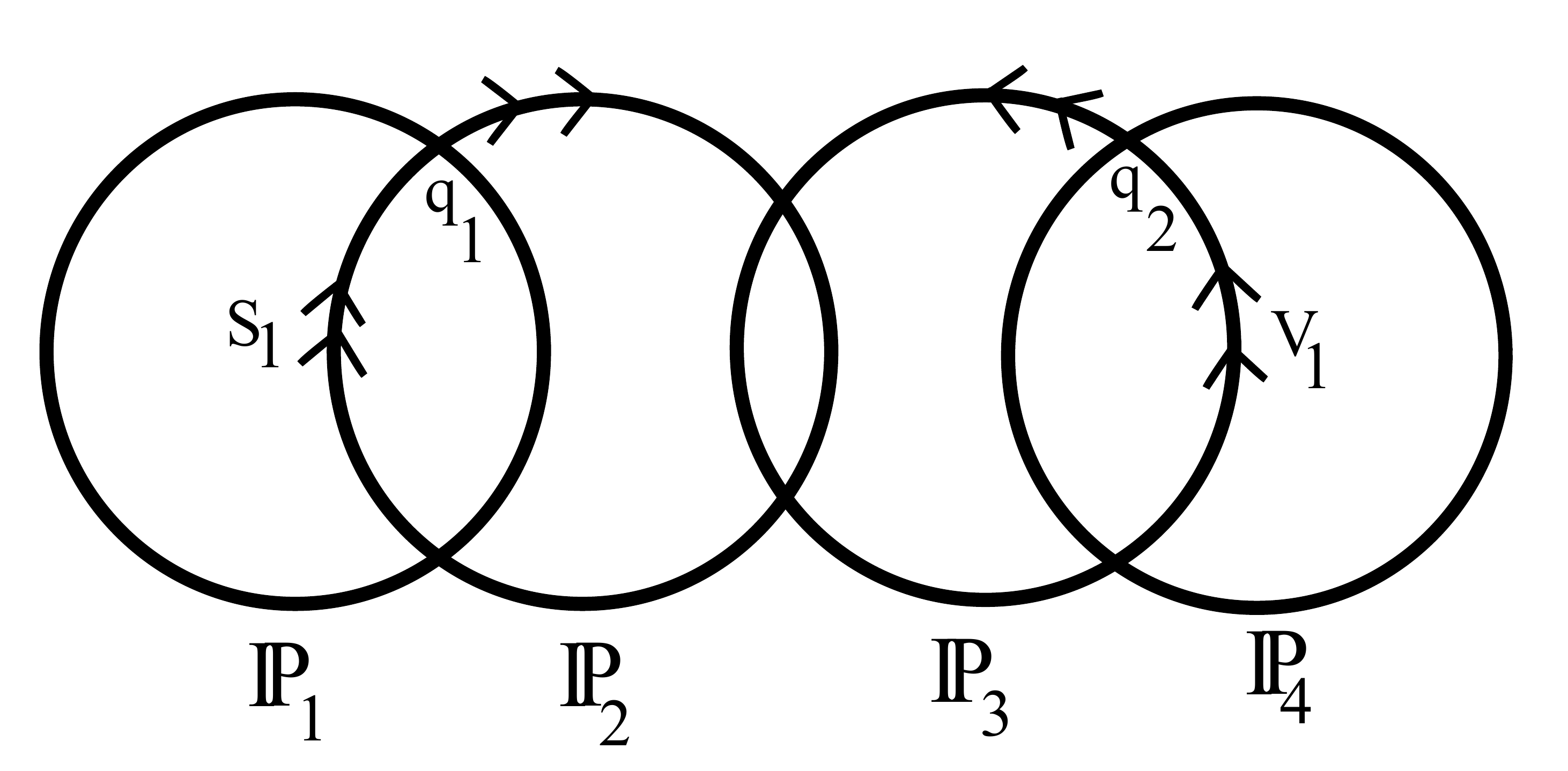} 
\caption{Strong separatrices $S_1 \subset \mathbb{P}_2$ and $V_1\subset \mathbb{P}_3$.}
\label{figura:case41}
\end{figure}
Without loss of generality, we consider $\mathbb{P}_1$ the projective line containing local coordinate chards of the Y-axis and $\mathbb{P}_4$ the projective line containing local coordinate chards of the X-axis.
Also, we consider $\mathbb{P}_1 \cap \mathbb{P}_2 \neq \emptyset$,  $\mathbb{P}_2 \cap \mathbb{P}_3 \neq \emptyset$ and  $\mathbb{P}_3 \cap \mathbb{P}_4 \neq \emptyset$. Suppose that there are two saddles-nodes  $q_1 \in \mathbb{P}_1 \cap \mathbb{P}_2$, $t_1 \in \mathbb{P}_3 \cap \mathbb{P}_4$ and a resonant saddle $p \in \mathbb{P}_2 \cap \mathbb{P}_3$. Let $S_1 \ni q_1$ and $V_1 \ni t_1$ be the stronger separatrices through $q_1,t_1$ respectivelly. It follows from Claim \ref{ClaimT31} that $S_1,V_1 \subset D$. If $S_1 \subset \mathbb{P}_1$ and $V_1 \subset \mathbb{P}_4$ we repeat the same arguments of cases $r=1,2$ and we came to a contradiction. Suppose now that $S_1 \subset \mathbb{P}_2$ and $V_1 \subset \mathbb{P}_3$ (see figure \ref{figura:case41}). Let $\widetilde{\omega}_2$ be a 1-form associated to $\fa(3)$ in $\mathbb{P}_2$, given by $\widetilde{\omega}_2 = mxdy + nydx + \dots = 0$, where $m,n \in \mathbb{N}$ and $<m,n> = 1.$ Let $\Gamma_2 \subset \mathbb{P}_2$ be the separatrix through the resonant saddle $p \in \Gamma_2$. The holonomy map by $\Gamma_2$ is of the form
\[
h_{p}(y) = e^{2 \pi i \frac{m}{n}} y + ...
\] 
Let $q_1,\dots, q_k$ be all saddle-nodes in $\mathbb{P}_2$. We consider the holonomy maps $h_{q_1}, \dots ,h_{q_k}$ by each strong separatrix $S_1, \dots S_k$ through $q_1,\dots, q_k$, respectively. Since $\mathbb{P}_2$ is simply connected, we have 
$$h_{p} \circ h_{q_1} \circ \dots \circ h_{q_k} = Id.$$
Then, if each $h_{q_j}$ is tangent to identity then $h_p$ is also tangent to identity. However, $h'_{p}(0) = \frac{m}{n}$ with $m,n \in \mathbb{N}$ and $<m,n> = 1$. Hence, $m = n$. By Camacho-Sad Index Theorem 
\[
Ind_p(\fa(4),\Gamma_2) + \sum_{j=1}^{k} Ind_{q_j}(\fa(4),S_j) = - \alpha
\]
where $\alpha \in \mathbb{N}$. Since $\sum_{j=1}^{k} Ind_{q_j}(\fa(4),S_j) = 0$ then 
$$Ind_p(\fa(4), \Gamma_2) = -\alpha.$$
On the other hand, $$Ind_p(\fa(4), \Gamma_2) = - \frac{m}{n} = - 1 \Rightarrow -\alpha = -1.$$ This implies that, $\mathbb{P}_2\cdot \mathbb{P}_2 = -1$. Analogously, $\mathbb{P}_3 \cdot \mathbb{P}_3 = -1.$ By equation (\ref{resiprop1}), this is a contradiction. \\
\\
\noindent \textbf{Case $r > 4$:} All other cases follows from the above.

Therefore, under this hypothesis there are no saddle-nodes in the resolution. The proof follows from Theorem \ref{Main}.
\end{proof}

\section{Proof of Theorem \ref{Cor3}}
As an application of our results we will give a proof of the following theorem of Mattei-Moussu. 
\\
\noindent{\textbf{Theorem 1.4.}}  (Mattei-Moussu \cite{mattei-moussu}, pg. 472, Theorem A)
Let $\mathcal{F}$ be a germ of holomophic foliation at $0 \in \mathbb{C}^2$. Suppose that $\fa$ admits a nonzero formal first integral. Then $\fa$ admits a holomorphic first integral.
\begin{proof}
We shall denote by $\fa^*$ the  foliation
with isolated  singularities  $\fa^*=\pi^*(\om)$. Thus $\fa^*$ is
the pull-back of $\fa$ via the blow-up map $\pi\colon \tilde
{\mathbb C^2_0} \to \mathbb C^2$. We can write the Taylor expansion
of $\om$ at $0$ as:
$$\om=\sum_{j=k}^{\infty}(P_jdy-Q_jdx),$$
where $P_j$ and $Q_j$ are homogeneous polynomials of degree $j$,
with $P_k \not\equiv 0$ or $Q_k \not\equiv 0$. The $1$-form
$\pi^*(\om)$ writes in the chart  $((t,x),U)$ as:
$$\pi^*(\om)= \sum_{j=k}^{\infty}(P_j(x,tx)d(tx)-Q_j(x,tx)dx)=$$
$$=x^k.\sum_{j=k}^{\infty} x^{j-k}.[(tP_j(1,t)-Q_j(1,t))dx - xP_j(1,t)dt].$$
Dividing the above $1$-form by
$x^k$ we obtain:
$$(*)\ \ x^{-k}.\pi^*(\om)= (tP_k(1,t)-Q_k(1,t))dx + xP_k(1,t)dt + x.\alpha$$
where
$\alpha=\sum_{j=k+1}^{\infty}x^{j-k-1}.[(tP_j(1,t)-Q_j(1,t))dx+xP_j(1,t)dt]$.
Set  $R(x,y)=yP_k(x,y)-xQ_k(x,y)$, in such a way that
$x^{-k}.\pi^*(\om)=R(1,t)dx +xP_k(1,t)dt+x.\alpha$.
\begin{Claim} \label{Claim1}
$\fa$ is non-dicritical.
\end{Claim}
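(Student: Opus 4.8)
The plan is to derive the claim from the characterization recalled in Section~\ref{ReviewFoliations}: a germ is non-dicritical if and only if it has only finitely many separatrices. I will show that every separatrix of $\fa$ lies inside the curve defined by a formal first integral, and that such a curve has only finitely many branches. The $1$-form computation $(*)$ above is not needed for this; it will only reappear as a consistency check.

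First, normalization. We may assume $\hat f$ is non-constant (otherwise the conclusion is trivial) and, after subtracting $\hat f(0)$, that $\hat f\in\mathbb{C}[[x,y]]$ satisfies $\hat f(0)=0$, $\hat f\neq 0$, $d\hat f\wedge\om=0$. Writing $\om=A\,dx+B\,dy$, the first-integral condition is the identity $\hat f_x\,B=\hat f_y\,A$ in $\mathbb{C}[[x,y]]$. Let $\Gamma$ be any separatrix and $\gamma(s)=(x(s),y(s))\in\mathbb{C}\{s\}^{2}$, $\gamma(0)=0$, a primitive parametrization of $\Gamma$. Since $\Gamma\setminus\{0\}$ is a leaf of $\fa$, invariance gives $A(\gamma(s))\,x'(s)+B(\gamma(s))\,y'(s)=0$. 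Substituting $\gamma$ into the identity above shows that the determinant $A(\gamma)\hat f_y(\gamma)-B(\gamma)\hat f_x(\gamma)$ vanishes in $\mathbb{C}[[s]]$; moreover $(A\circ\gamma,\,B\circ\gamma)\not\equiv(0,0)$, because $\Gamma\not\subset\sing(\om)=\{0\}$. Hence, over the fraction field $\mathbb{C}((s))$, the vector $(\hat f_x(\gamma),\hat f_y(\gamma))$ is proportional to $(A(\gamma),B(\gamma))$, so $\tfrac{d}{ds}\big(\hat f\circ\gamma\big)=\hat f_x(\gamma)x'+\hat f_y(\gamma)y'=0$. Therefore $\hat f\circ\gamma\equiv\hat f(\gamma(0))=0$, i.e.\ an equation $g_\Gamma$ of $\Gamma$ divides $\hat f$ in $\mathbb{C}[[x,y]]$.

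Now $\mathbb{C}[[x,y]]$ is a unique factorization domain, so the nonzero series $\hat f$ has only finitely many divisors up to associates. Distinct separatrices have pairwise non-associate equations $g_\Gamma$, all dividing $\hat f$; hence $\fa$ has only finitely many separatrices, and by the criterion of Section~\ref{ReviewFoliations} it is non-dicritical. This is coherent with formula $(*)$: were the first blow-up dicritical, one would get $R(x,y)=yP_k(x,y)-xQ_k(x,y)\equiv 0$, the divisor $\mathbb{P}_q$ would be transverse to $\fa(1)$, and pushing its generic leaves down by $\pi$ would already produce infinitely many separatrices of $\fa$ — excluded by the previous paragraph.

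The point that needs care is the middle step — that a formal $\hat f$ is constant along an arbitrary, possibly singular, separatrix. This is purely formal–algebraic: one substitutes the convergent parametrization into the formal series to get power series in $s$, passes to the field $\mathbb{C}((s))$ for the proportionality argument, and must allow $A\circ\gamma$ or $B\circ\gamma$ to vanish identically (as happens when $\Gamma$ is a coordinate axis), using only $(A\circ\gamma,B\circ\gamma)\not\equiv(0,0)$ rather than the pointwise nonvanishing of $\om$ along $\Gamma$. Everything after that is the elementary finiteness of divisors in a UFD.
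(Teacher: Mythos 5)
Your argument is correct, but it follows a genuinely different route from the paper. The paper works infinitesimally: taking $\om=d\widehat F$ with lowest homogeneous part $\widehat F_{k+1}\not\equiv 0$, it contracts with the radial vector field and uses Euler's identity to get $R(x,y)=yP_k-xQ_k=(k+1)\widehat F_{k+1}\not\equiv 0$, i.e.\ the tangent cone does not vanish identically, so the exceptional divisor of the first blow-up is invariant (and this nonvanishing of $R$ is exactly what is reused in the formula $(*)$ for $x^{-k}\pi^*(\om)$; strictly, non-dicriticality of the full reduction then comes from iterating, since each singularity appearing in the process again inherits a formal first integral, as the paper notes in its Claim on generalized curves). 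You instead prove directly the paper's stated characterization of non-dicriticality -- finitely many separatrices -- by showing that the equation of every separatrix divides $\hat f$ in $\mathbb{C}[[x,y]]$ (invariance plus $\hat f_xB=\hat f_yA$, proportionality over $\mathbb{C}((s))$, hence $\hat f\circ\gamma\equiv 0$) and then invoking unique factorization. Your route settles the global statement in one stroke, with no need to pass through the resolution, but it leans on standard yet nontrivial facts about formal curves: that the kernel of the substitution map $\mathbb{C}[[x,y]]\to\mathbb{C}[[s]]$ along a primitive parametrization is the principal ideal $(g_\Gamma)$, and that distinct separatrices give non-associate formal factors; the paper's computation is more elementary and self-contained and feeds directly into the rest of its proof. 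One small wording correction: if $\hat f$ were constant the conclusion is not ``trivial'' -- the hypothesis is vacuous and the claim could fail; the hypothesis must be read (as the paper implicitly does via $\widehat F_{k+1}\not\equiv 0$) as $d\hat f\neq 0$, which is exactly the normalization you in fact use.
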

Let $\widehat{F}(x,y) = \sum_{i+j = 0}^{\infty}a_{ij}x^{i}y^{j}$ be a formal first integral to $\fa$. There is a formal power series $\widehat{G} \in \mathbb{C}[[x,y]]$ such that $\om = \widehat{G}d\widehat{F}$. We can consider $\om = d\widehat{F}$, since $d\widehat{F}$ defines the same foliation as $\widehat{G}d\widehat{F}$. Write $$\widehat{F} = \widehat{F}_{k+1} + \widehat{F}_{k+2} + \widehat{F}_{k+3} + \dots $$  where $\widehat{F}_{\lambda}(x,y) = \sum_{i+j =\lambda}a_{ij}x^{i}y^{j}$ and $\widehat{F}_{k+1} \not\equiv 0$. Then
$$
d\widehat{F} = d\widehat{F}_{k+1} + d\widehat{F}_{k+2} +  d\widehat{F}_{k+3} + ...
$$
We have
\begin{eqnarray*}
\om_{k+1} &=& d\widehat{F}_{k+1} \\
          &=& \sum_{\mid I\mid = k+1} a_I q_1 x^{q_1 - 1} y^{q_2} dx + \sum_{\mid I \mid = k+1}a_Iq_2x^{q_1}y^{q_2}    
\end{eqnarray*}
Let $\overrightarrow{V}(x,y) = x \frac{\partial}{\partial x} + y \frac{\partial}{\partial y}$ be the radial vector field in $\mathbb{C}^2$. Then 
\begin{eqnarray*}
R(x,y) &=& \om \cdot \overrightarrow{V}  \\
       &=& d\widehat{F}_{k+1}  \cdot \overrightarrow{V} \\
       &=& \sum_{\mid I\mid = k+1} a_I q_1 x^{q_1} y^{q_2} + \sum_{\mid I\mid = k+1} a_I q_2 x^{q_1 - 1} y^{q_2}\\
       &=& (k+1)\sum_{\mid I\mid = k+1} a_I q_1 x^{q_1} y^{q_2}\\
       &=& (k+1) \widehat{F}_{k+1}.
\end{eqnarray*}
Since $\widehat{F}_{k+1} \not\equiv 0$ therefore $R \not\equiv 0$. This proves Claim \ref{Claim1}.

\begin{Claim} \label{Claim2}
$\fa$ is a generalized curve.
\end{Claim}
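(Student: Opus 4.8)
The plan is to argue by contradiction: if the reduction of singularities of $\fa$ contained a saddle-node, then $\widehat{F}$ would produce a nonconstant formal first integral at that saddle-node, which is impossible.

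\emph{Step 1: the formal first integral is inherited by every blow-up.} Let $\pi_1\colon \widetilde{\mathbb{C}}^2_0 \to \mathbb{C}^2$ be the blow-up at the origin and $\fa(1)=\pi_1^*(\fa)$; by Claim \ref{Claim1} the divisor $\mathbb{P}_1=\pi_1^{-1}(0)$ is invariant, so $\pi_1^*\omega = x^\nu\eta$ in the chart $\pi_1(x,t)=(x,tx)$, with $\eta$ a holomorphic $1$-form defining $\fa(1)$ and $\nu\ge 1$. Since $d\widehat{F}\wedge\omega=0$ we get $x^\nu\,d(\widehat{F}\circ\pi_1)\wedge\eta=0$, hence $d(\widehat{F}\circ\pi_1)\wedge\eta=0$ in $\mathbb{C}[[x,t]]$. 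Moreover $\widehat{F}\circ\pi_1(x,t)=\sum_{i,j\ge 0}a_{ij}x^{i+j}t^j=\sum_{m\ge 0}x^m P_m(t)$ with $P_m(t)=\sum_{i+j=m}a_{ij}t^j$ a polynomial of degree $\le m$; this special shape (a consequence of $\mathbb{P}_1$ being invariant) means $\widehat{F}\circ\pi_1$ re-expands, with no convergence issue, about any point $(0,t_0)$ of the divisor, and it remains nonconstant there because some $P_m$ with $m\ge 1$ is a nonzero polynomial. Thus $\fa(1)$ admits a nonconstant formal first integral at each of its singular points on $\mathbb{P}_1$ (the point missing from this chart being covered by the symmetric one). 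Iterating along the finite chain of blow-ups $\pi\colon\widetilde{M}\to\mathbb{C}^2$ that desingularizes $\fa$ — each step carried out for the local formal first integral produced at the previous stage — we conclude that $\widetilde{\fa}=\pi^*(\fa)$ admits a nonconstant formal first integral at every point of $\sing(\widetilde{\fa})$.

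\emph{Step 2: a saddle-node has no nonconstant formal first integral.} Suppose $q\in\sing(\widetilde{\fa})$ were a saddle-node. By the normalization recalled in Section \ref{SaddlenodeSection} there are $p,\lambda$ and a formal coordinate change conjugating $\widetilde{\fa}$ near $q$ to $\omega_{p,\lambda}$, i.e. $\dot x=x(1+\lambda y^p)$, $\dot y=y^{p+1}$; as a formal diffeomorphism is formally invertible, $\omega_{p,\lambda}$ would then admit a nonconstant formal first integral $\widehat{G}=\sum_{a,b\ge 0}g_{ab}x^ay^b$. Writing out $X(\widehat{G})=0$ for the associated formal vector field $X$ gives, for every $a,b\ge 0$,
\[
a\,g_{ab}+(\lambda a+b-p)\,g_{a,b-p}=0,\qquad g_{a,b-p}:=0 \text{ for } b<p.
\]
Taking $b<p$ we get $g_{ab}=0$ for all $a\ge 1$; substituting back into the recursion for $b\ge p$ forces $g_{ab}=0$ for all $a\ge 1$ and $b\ge 0$, while the equations with $a=0$ and $b\ge p+1$ give $(b-p)g_{0,b-p}=0$, i.e. $g_{0,c}=0$ for all $c\ge 1$. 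Hence $\widehat{G}$ is constant, a contradiction. (Conceptually this is the phenomenon illustrated by the Euler equation in Example \ref{Exe3}: the true first integral $xy^{-\lambda}e^{1/(py^p)}$ of $\omega_{p,\lambda}$ is genuinely divergent.) Therefore no point of $\sing(\widetilde{\fa})$ is a saddle-node, that is, $\fa$ is a generalized curve.

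The computations in both steps are routine; the point requiring care is Step 1 — verifying that $\widehat{F}$ pulled back remains a genuine (re-expandable, nonconstant) formal first integral at the non-corner singular points of every intermediate divisor, and threading the induction consistently through the corners of the exceptional divisor — which is exactly where non-dicriticity (Claim \ref{Claim1}) is used. I expect this bookkeeping, rather than any single estimate, to be the main obstacle.
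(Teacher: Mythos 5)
Your proposal is correct and follows essentially the same route as the paper, whose proof of this claim is just a terse assertion of the two facts you verify in detail: the (nonconstant) formal first integral persists at every singular point arising in the reduction of singularities, and a saddle-node admits no nonconstant formal first integral (via the formal normal form $\omega_{p,\lambda}$ and the coefficient recursion). One cosmetic remark: the shape $\sum_{m\ge 0}x^{m}P_m(t)$ with $\deg P_m\le m$ is a consequence of the substitution $(x,y)\mapsto (x,tx)$ alone and does not use invariance of $\mathbb{P}_1$, so this attribution should be dropped, but it does not affect the argument.
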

Indeed, by blowing-up each singularity in the reduction process also admits a formal first integral. Therefore, no saddle-nodes appear in the reduction of singularities.
\\
\\ 
From now on we consider the exceptional divisor $D = \bigcup_{j=1}^{m} D_j$ where each $D_j$ is irreducible component is diffeomorphic to an embedded projective line introduced as a divisor of the successive blowing up's. Fix $\Gamma : \{y=0\}$ a separatrix of $\fa$. Without loss of generality, we will suppose that $\widetilde{\Gamma} = \pi^{-1}(\Gamma)$ is transverse to the projective line $D_1$ and $\widetilde{\Gamma} : \{t = 0\}$ on the local coordinate chart $E(x,t)= (x,xt)$ of $D_1$. 
\begin{figure}[h]
\centering 
\includegraphics[width=8cm]{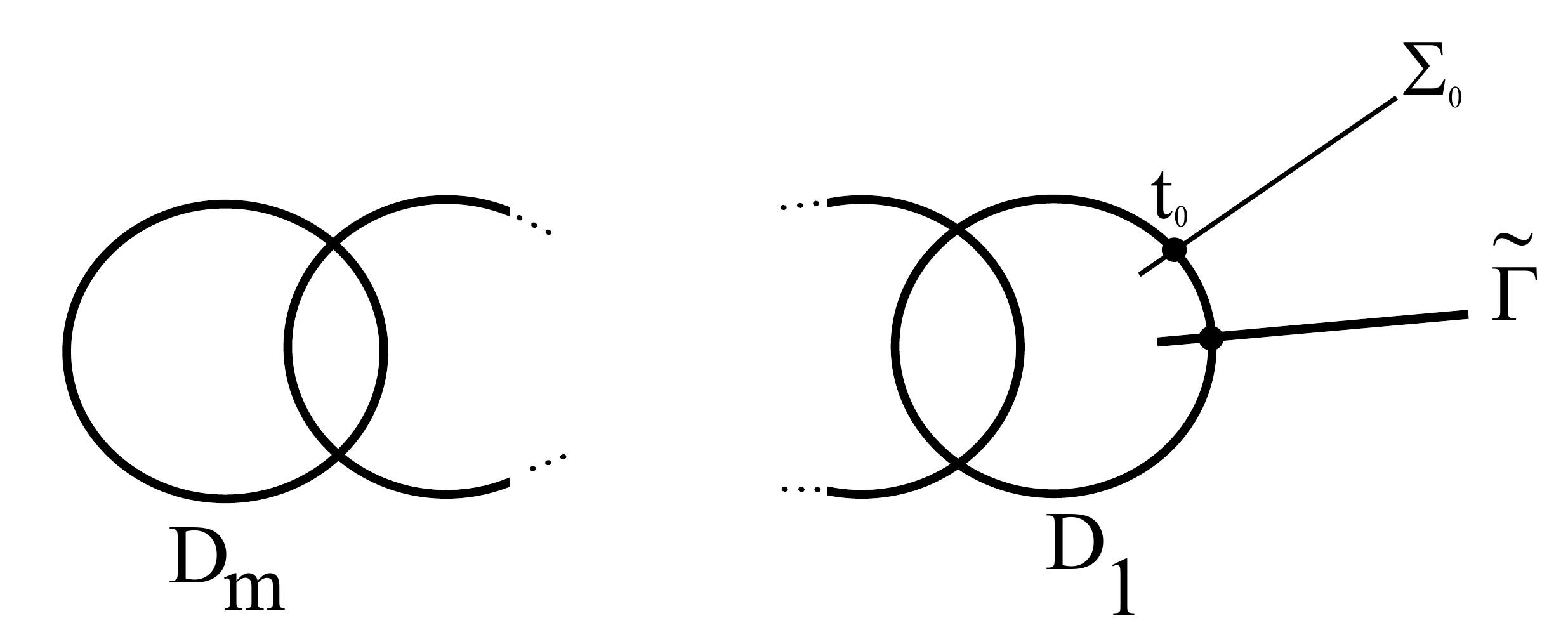} 
\caption{ }
\label{figura:applic1}
\end{figure}
Consider a local coordinate chart $E(x,t)= (x,xt)$. Then
\begin{eqnarray*}
\widehat{F} \circ E(x,t) & = & \sum_{i+j = 0}^{\infty}a_{ij}x^{i}(tx)^{j}\\
                         & = & \sum_{i+j = 0}^{\infty}a_{ij}t^{j}x^{i+j}\\
                         & = & \sum_{k = 0}^{\infty}\left(\sum_{i=0}^{k} a_{i(k-i)}t^{k-i}\right) x^{k}\\
                         & = & \sum_{k = 0}^{\infty}A_k(t)x^{k},
\end{eqnarray*}
where the coefficients $A_k$ are polynomials at $t$. Fix a non singular point $t_0 \in D_1 \setminus sing(\widetilde{\fa})$ next enough to the corner $\widetilde{\Gamma} \cap D_1$. Then $\widehat{F} \circ E(x,t_0) =\sum_{k = 0}^{\infty}A_k(t_0)x^{k} \in \mathbb{C}[[x]]$, that is, a formal power series in one variable. Let $\Sigma_0$ be a transverse section at $t_0$. By Borel-Ritt Theorem, given a sector $S_0 \subset \Sigma_0$ there is a holomorphic function $\varphi_0:S_0 \rightarrow \mathbb{C}$ that admits $\widehat{F} \circ E(x,t_0)$ as asymptotic expansion in $S_0$. By construction, $\varphi_0 :S_0 \rightarrow \mathbb{C}$ is constant in the trace of each leaf of $\widetilde{\fa}$ in $S_0$. Furthermore, $\varphi_0$ admits a nonzero asymptotic expansion. Since $\fa$ is a generalized curve, the corner $D_1 \cap \widetilde{\Gamma}$ can not be a saddle-node. By Proposition \ref{NextEnough} the pair $(\widetilde{\fa},\widetilde{\Gamma})$ admits a moderate sectorial first integral. By isomorphism  $\pi:\widetilde{\fa} \setminus D \rightarrow \fa \setminus \{0\}$ we conclude that the pair $(\fa,\Gamma)$ admits a moderate sectorial first integral. Recalling that $\fa$ is non-dicritical and a generalized curve we can apply Theorem \ref{Main} and we conclude that $\fa$ admits holomorphic first integral. 
\end{proof}

\bibliographystyle{amsalpha}

\end{document}